\pdfoutput=1
\documentclass[11pt]{amsart}
\usepackage{amssymb}
\usepackage{epstopdf}
\usepackage{nicefrac}
\usepackage{enumerate}
\usepackage{hyperref}
\usepackage{float}
\usepackage{xcolor}
\usepackage{mathtools}
\usepackage{tabularx}
\usepackage{tikz-cd}
\usepackage{footmisc}
\usepackage{dsfont}
\usepackage{amsthm}
\usepackage{thm-restate}

\usepackage{listings}
\definecolor{codegreen}{rgb}{0,0.6,0}
\definecolor{codegray}{rgb}{0.5,0.5,0.5}
\definecolor{codepurple}{rgb}{0.58,0,0.82}
\definecolor{codestring}{rgb}{0.8,0.3,0.1}
\newcommand{\Z}{\mathbb{Z}}
\newcommand{\F}{\mathbb{F}}
\newcommand{\A}{\mathbb{A}}

\newcommand{\arb}{\operatorname{Arb}_{f,\gamma}}

\input xy
\xyoption {all}

\title{Local-global conjugacy questions for affine extensions}
\author{Dean Wardell}
\date{}

\subjclass[2020]{Primary 20E08; Secondary 11R32, 20B25, 20E45, 20F65}
\keywords{Automorphism group of a rooted tree, local conjugacy, global conjugacy, iterated wreath product}

\theoremstyle{definition}
\newtheorem{defn}{Definition}[section]
\newtheorem*{acknowledgement}{Acknowledgement}

\theoremstyle{plain}
\newtheorem{thm}[defn]{Theorem}
\newtheorem{claim}[defn]{Claim}
\newtheorem*{claim*}{Claim}
\newtheorem{cor}[defn]{Corollary}
\newtheorem{question}[defn]{Question}
\newtheorem{prop}[defn]{Proposition}
\newtheorem{lem}[defn]{Lemma}
\newtheorem{conj}[defn]{Conjecture}

\theoremstyle{remark}
\newtheorem{example}[defn]{Example}
\newtheorem{remark}[defn]{Remark}

\newcommand{\p}{\mathcal{P}}
\newcommand{\q}{\mathcal{Q}}
\newcommand{\id}{\operatorname{id}}

\newcommand{\gal}{\operatorname{Gal}}
\newcommand{\sep}{K^{\operatorname{sep}}}
\newcommand{\T}{\operatorname{Aut}(T)}
\newcommand{\Tn}{\operatorname{Aut}(T_n)}
\newcommand{\Tnn}{\operatorname{Aut}(T_{n-1})}
\newcommand{\im}{\operatorname{im}}
\newcommand{\aut}{\operatorname{Aut}}

\newcommand{\one}{\operatorname{id}}
\newcommand{\zero}{\textbf{0}}

\newcommand\isomto{\stackrel{\textstyle\sim}{\smash{\longrightarrow}\rule{0pt}{0.4ex}}}

\begin{document}

\begin{abstract}

    Boston and Jones constructed a probabilistic model, called the Markov model, in order to predict the cycle structures of elements in Galois groups $G_n(f)$ associated to the $n$-th iterate of a quadratic postcritically finite polynomial $f$ over a number field. Goksel refined this model, introducing the 'even' Markov groups $M_n(f)$. These groups conjecturally contain a copy of $G_n(f)$, leading to questions about local-global conjugacies within the larger automorphism group $\Tn$ of the first $n$ levels of the binary rooted tree coming from arboreal representations of the Galois groups.

    While the conjugacy results found by Goksel were restricted to the study of these automorphism groups, we generalise the findings to affine extensions of groups by permutation representations, using a cohomological argument. Furthermore, we provide a counterexample to the main conjecture proposed by Goksel, demonstrating that more work is required to resolve the underlying questions regarding Markov models.
    
\end{abstract}

\maketitle

\section{Introduction}

Let $K$ be a field, consider a polynomial $f\in K[x]$, and let $\gamma\in K$. For each $n\geq 0$, let $f^n$ denote the $n$-th iterate of $f$, where by convention $f^0(x)=x$. Throughout, we assume that for all $n\geq 1$ the polynomial $f^n(x)-\gamma\in K[x]$ is separable. The roots of these iterates in a fixed separable closure $K^{\text{sep}}$ naturally give rise to a $\deg(f)$-ary regular rooted tree $T$ by taking its vertex set to be $$\bigsqcup_{k\geq 0} f^{-k}(\gamma),$$ and connecting vertices $\alpha\in f^{-n}(\gamma)$ and $\beta\in f^{-n-1}(\gamma)$ by an edge if and only if $f(\beta)=\alpha$. Naturally, since elements of the Galois group $\gal(\sep/K)$ commute with (the action of) $f$, we obtain induced actions of $\gal(\sep/K)$ on each preimage set $f^{-n}(\gamma)$ that preserve the adjacency of vertices of $T$. Hence, associated to $f$ and $\gamma$, we obtain an \textit{arboreal Galois representation} $$\arb:\gal(\sep/K)\to \T.$$

Similarly, if for $n\geq 0$ we define $G_n(f)$ to be the Galois group of the splitting field of $f^n(x)-\gamma$ over $K$, then $G_n(f)$ embeds into $\aut(T_n)$, where $T_n$ is the subgraph of $T$ with vertex set $$\bigsqcup_{k=0}^n f^{-k}(\gamma),$$ and where as edge set we take all edges of $T$ that connect vertices in $T_n$. We may now equivalently study the representation $\arb$ via the inverse limit $$\im(\arb)\cong \varprojlim G_n(f).$$ In either case, understanding $\im(\arb)$ as a subgroup of $\T$ is one of the central problems in arithmetic dynamics, see for example \cite{Benedetto,Jones} for an overview. It is expected that, for most polynomials, $\im(\arb)$ has finite index in $\T$, with post-critically finite polynomials being an exceptional case where this index is infinite.

The study of these problems dates back to the 1980s, when Odoni studied prime divisors in polynomial sequences of the form $a_n =f(a_{n-1})$, see for example \cite{Odoni}. When $K$ is a number field, the natural density of such primes within the set of all primes can be related to the sizes of conjugacy classes within $G_n(f)$ through the use of the Chebotarev density theorem, see \cite{StevenhagenLenstra} for a detailed discussion of the theorem. Essentially, determining this density relies on understanding the cycle structures of the associated Frobenius elements $\text{Frob}_\mathfrak{p}$ for various primes $\mathfrak{p}\subseteq \mathcal{O}_K$ of the ring of integers. As the cycle structure of the action of $\text{Frob}_\mathfrak{p}$ on the roots of a polynomial $g$ is given by the degrees of the irreducible factors of the reduction $\overline{g}\in \F_\mathfrak{p}[x]$, the factorisation of reduced polynomials $\overline{f^n}(x)-\overline{\gamma}\in\F_\mathfrak{p}[x]$ for various primes provides information about the Frobenius elements.

Using the density distributions given by the Chebotarev density theorem, Boston and Jones introduced a Markov model in \cite{BostonJonesSettled} for post-critically finite quadratic polynomials $f$ (where one assumes $\gamma = 0$ up to conjugation), in order to predict the behaviour of the factorisations of iterates $f^n$ modulo primes. Given the data of some level $n$, one can predict the densities of any cycle structure at level $n$, which could allow us to create a prediction for the Galois group $G_n(f)$, as well as determine the level $n+1$ data by applying the Markov process as given by the model. While they found an explicit example where the actual cycle data of the Galois group at level 6 did not match the level 6 data obtained from the Markov process, the prediction still appeared to correspond to an actual subgroup of $\aut(T_6)$.

Goksel further investigated this phenomenon in \cite{GokselMarkov}. By restricting to primes $p\equiv 1 \mod 4$, he formally defines the so-called \emph{even} Markov groups $M_n(f)$. These groups refine the Boston-Jones heuristic, providing a different model candidate that conjecturally contains the Galois groups $G_n(f)$.

\begin{conj}[\cite{GokselMarkov}, Conjecture 7.1] \label{Conj: GokselMarkov}
    For all $n\geq 1$, $G_n(f)$ is isomorphic to a subgroup of $M_n(f)$.
\end{conj}

One key observation of \cite{GokselMarkov} was that Conjecture \ref{Conj: GokselMarkov} could be proven by a purely group-theoretic argument inside the automorphism groups $\Tn$, as we explain below. Firstly, we naturally identify $\T=\varprojlim \Tn$, where the quotient maps $$\pi_n:\Tn\to \Tnn$$ are given by restricting the automorphisms to the subgraphs. Define $K_n:=\ker(\pi_n)$. Then, for any pair of subgroups $H,G\leq \Tn$, we consider the following definitions.

\begin{defn}[\cite{GokselMarkov}, Definition 7.2] \label{IntroDef: defn7.2}
    Let $n\geq 0$, let $T_n$ denote the first $n$ levels of the rooted binary tree, and let $H,G\leq \Tn$. We say that $H$ is \textbf{elementwise $K_n$-conjugate into $G$} if for each $h\in H$ there exists some $k_h\in K_n$ such that $k_hhk_h^{-1}\in G$. Similarly, we say that \textbf{$H$ is globally $K_n$-conjugate into $G$} if there exists some $k\in K_n$ such that $kHk^{-1}\subseteq G$.
\end{defn}

\begin{defn}[\cite{GokselMarkov}, Definition 7.3] \label{IntroDef: defn7.3}
    Let $n\geq 0$, let $T_n$ denote the first $n$ levels of the rooted binary tree, and $H,G\leq \Tn$. We say that the property $\p(H,G)$ \textbf{holds} if $H$ is elementwise $K_n$-conjugate into $G$ if and only if $H$ is globally $K_n$-conjugate into $G$.
\end{defn}

Back to the dynamical setting, Goksel noted in \cite{GokselMarkov} that if we assume $G_{n-1}(f)\subseteq M_{n-1}(f)$ for some $n$, then by the construction of the group $M_n(f)$, the lifts (or extensions) of elements $x\in G_{n-1}(f)$ into $G_n(f)$ and $M_n(f)$ should be $K_n$-conjugate. Therefore, $G_n(f)$ becomes elementwise $K_n$-conjugate into $M_n(f)$, and verifying $\p(G_n(f),M_n(f))$ would prove that $G_n(f)$ is conjugate to a subgroup of $M_n(f)$. By induction, Conjecture \ref{Conj: GokselMarkov} holds. As a result, a natural group-theoretic question arose.

\begin{question}[\cite{GokselMarkov}, Question 7.4] \label{Question: GokselMarkov}
    For which subgroups $H,G\leq \Tn$ does $\p(H,G)$ hold?
\end{question}

The property $\p(H,G)$ is known in two generic cases, both first stated, but not proven, in \cite{GokselMarkov}. A proof of the latter statement can be found in \cite{Goksel}.

\begin{thm}[{\cite{GokselMarkov}},Theorem 7.5] \label{Thm: Goksel's proven properties}
    Let $n\geq 1$, and consider subgroups $H,G\leq \Tn$ of the automorphism group of the first $n$ levels of the rooted binary tree. Then $\p(H,G)$ holds in the following two cases:

    \begin{itemize}
        \item[(a)] $|H|=2^{n+1}$ and $H$ contains an element acting transitively on the $n$-th level vertices of the tree $T$.
        \item[(b)] $|H|=|G|$ and $H\cap K_n=\{\one\}$.
    \end{itemize}
\end{thm}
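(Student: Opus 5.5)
The only nontrivial implication in both cases is ``elementwise $\Rightarrow$ global'': if $kHk^{-1}\subseteq G$, every $h$ is conjugated by the same $k$. I would put both cases into one cohomological framework. Write $\pi=\pi_n$ and $M=K_n\cong \F_2^{2^{n-1}}$. This is an elementary abelian $2$-group, and $\Tn$ acts on it through $\Tnn$ by permuting the level-$(n-1)$ vertices, so $M$ is a permutation module $\F_2[X]$ with $X$ the set of level-$(n-1)$ vertices. Conjugating by $K_n$ does not change images under $\pi$. So if $H$ is elementwise $K_n$-conjugate into $G$, then $Q:=\pi(H)\subseteq \pi(G)$, and we may replace $G$ by $G\cap \pi^{-1}(Q)$.

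\textbf{Case (b).} Since $H\cap K_n=\{\one\}$, we get $|Q|=|H|=|G|\ge|\pi(G)|\ge |Q|$. Hence $G\cap K_n=\{\one\}$ and $\pi(G)=Q$. Thus $H$ and $G$ are both complements to $M$ in $E=M\rtimes Q:=\pi^{-1}(Q)$, given by sections $q\mapsto s_H(q)$ and $q\mapsto s_G(q)$. The difference $d(q)=s_G(q)s_H(q)^{-1}\in M$ is a $1$-cocycle $Q\to M$, and $H$ is globally $K_n$-conjugate to $G$ exactly when $[d]=0$ in $H^1(Q,M)$. Elementwise conjugacy says that for each $q$ there is $k$ with $ks_H(q)k^{-1}=s_G(q)$, since the element of $G$ over $q$ is unique. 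This means $d(q)\in(1-q)M$, i.e.\ $d|_{\langle q\rangle}$ is a coboundary. So the claim reduces to
$$\ker\Big(H^1(Q,M)\to \prod_{q\in Q}H^1(\langle q\rangle,M)\Big)=0.$$
To prove this, decompose $X$ into $Q$-orbits, so that $M\cong\bigoplus \operatorname{Ind}_{S}^{Q}\F_2$ with $S$ running over point stabilisers. By Shapiro's lemma, $H^1(Q,\operatorname{Ind}_S^Q\F_2)\cong \operatorname{Hom}(S,\F_2)$. By Mackey's formula, the restriction to $\langle s\rangle$ for $s\in S$ contains the component $H^1(\langle s\rangle\cap S,\F_2)=\operatorname{Hom}(\langle s\rangle,\F_2)$ coming from the trivial double coset. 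So a class $\varphi$ that vanishes locally satisfies $\varphi(s)=0$ for every $s\in S$, hence $\varphi=0$. This gives (b).

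\textbf{Case (a).} I would run the same argument for a possibly non-split $H$. Let $\sigma\in H$ act transitively on level $n$, so $\sigma$ has order $2^n$. Then $\pi(\sigma)$ acts regularly on $X$ and has order $2^{n-1}$, and $\sigma^{2^{n-1}}=z$ is the all-swaps element of $K_n$, which is central in $\Tn$. From $|H|=2^{n+1}$, either $H\cap K_n=\langle z\rangle$ and $|Q|=2^n$, or $H\cap K_n$ has order $4$ and $Q=\langle\pi(\sigma)\rangle$.

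First, apply one elementwise conjugation to assume $\sigma\in G$. Since $\pi(\sigma)$ is transitive on $X$, $C_M(\sigma)=\langle z\rangle$, so further conjugators are essentially determined and we must place $H$ directly. For this I would compare $H$ with $G':=G\cap \pi^{-1}(Q)$ through the affine structure: the fibres of $\pi$ over $Q$ in $H$ are cosets of $H\cap K_n$, and those in $G'$ are cosets of $G'\cap K_n\supseteq H\cap K_n$ (this containment also needs checking). Passing to $M/(H\cap K_n)$, the ``difference'' of $H$ and $G'$ becomes a cocycle class. It is again locally trivial by elementwise conjugacy, and $M$ restricted to $Q\supseteq\langle\pi(\sigma)\rangle$ is a transitive permutation module $\operatorname{Ind}_S^Q\F_2$ with $|S|\le 2$. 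The Shapiro--Mackey argument then kills the class.

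The main obstacle is exactly this quotient step in (a). The module $M/(H\cap K_n)$ is no longer a permutation module, so the vanishing of locally trivial classes must be checked by hand. If the general argument stalls, I would do this directly in the small cases $|S|\in\{1,2\}$ using the explicit structure $H=\langle\sigma,\tau\rangle$.
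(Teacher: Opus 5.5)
Your proof of (b) is correct and follows the paper's route. The paper derives (b) from Theorem~\ref{Thm: IntroMain1}: the elementwise conjugators give a ``local $1$-coboundary'', and Theorem~\ref{Thm: local coboundary is global coboundary} shows that over a permutation module every such cocycle is a coboundary. That theorem is proved orbit by orbit instead of via Shapiro's lemma, but its key input, Lemma~\ref{Lem: properties of local coboundaries}(ii) ($\gamma(g)(x)=0$ whenever $g$ fixes $x$), is exactly your $\varphi(s)=0$ for $s\in S$. Your deduction of $G\cap K_n=\{\one\}$ from $|H|=|G|$ is what is needed to apply it.

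For (a), the paper gives no proof (it only cites \cite{GokselMarkov}), and your sketch has a genuine gap at the step you flag. First, you quotient by the wrong subgroup. The fibres of $G'$ over $Q$ are cosets of $N:=G\cap K_n$, which can be much larger than $H\cap K_n$. So the difference of $H$ and $G'$ only makes sense in $M/N$, as in Proposition~\ref{Prop: Old main2}. After moving $\sigma$ to the standard odometer $c$ and arranging $c\in H\cap G$, one gets $N=E_r$ for some $r\ge 1$ that $H$ does not control. Second, no Shapiro--Mackey-type principle can finish the argument, because local--global genuinely fails for these quotients with $Q$ of exactly your shape. In Example~\ref{Ex: counterexample} ($n=4$), $Q$ has order $2^n$ and contains $\pi(c)$ with index $2$, so again $|S|=2$. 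Yet the difference class in $H^1(Q,M/E_3)$ is locally trivial and nonzero. So handling ``$|S|\in\{1,2\}$ by hand'' does not address the problem. The hypothesis $|H|=2^{n+1}$ must be used to pin down $\tau$.

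The missing idea is as follows. Since $\langle c\rangle$ has index $2$ in $H$, $H$ normalises it. Identify $V_n$ with $\Z/2^n$ (first letter least significant), so that $c$ is $x\mapsto x+1$. The normaliser of $\langle c\rangle$ in $\Tn$ is then the affine group $x\mapsto ax+b$ with $a$ odd, and $a^2\equiv 1\pmod{2^n}$ leaves two cases.

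In the first case, $\pi(H)=\pi(\langle c\rangle)$. Then $H=(H\cap K_n)\langle c\rangle\subseteq G$ at once, since $H\cap K_n\subseteq G\cap K_n$.

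In the second case, $\pi(\tau)$ is a reflection $x\mapsto\beta-x$ of $\Z/N$, where $N=2^{n-1}$. The elements of $G$ over $\pi(\tau)$ are $(s,\one)\tau$ with $s\in s_0+E_r$, and conjugating $c$ by one of them forces $s_0\in E_{r+1}$. Suppose $s_0\notin E_r$. Then elementwise conjugacy of each $c^j\tau$ gives $e_{r+1}\in(I+A_{c^j\tau})(\F_2^{V_{n-1}})+E_r$. Since $E_r^{\perp}=E_{N-r}$ for the standard dot product, this says that no $\pi(c^j\tau)$ fixes a vector of $E_{N-r}\setminus E_{N-r-1}$.

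This is impossible. In $\F_2^{V_{n-1}}\cong\F_2[x]/(x^N-1)$, $A_c$ is multiplication by $x$ and $E_k=((1+x)^{N-k})$. The vector $x^s(1+x)^r$ lies in $E_{N-r}\setminus E_{N-r-1}$. It is fixed by $\pi(c^j\tau)\colon x\mapsto\beta+j-x$ as soon as $2s\equiv\beta+j-r\pmod N$, which is solvable for $j\equiv\beta+r\pmod 2$. Hence $s_0\in E_r$, so $\tau\in G$ and $H\subseteq G$.

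In the counterexample, by contrast, $\pi(\tau)$ is $x\mapsto 5x+4$ on $\Z/8$. No $\pi(c^j\tau)$ fixes a vector of $E_5\setminus E_4$, which is why the class there survives.
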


Naturally, one can consider whether the groups $H:=G_n(f)$ and $G:=M_n(f)$ satisfy the hypotheses of Theorem \ref{Thm: Goksel's proven properties}. Unfortunately, these hypotheses appear to be restrictive. For case (a) to apply, we would require the size of the Galois group to be exactly $2^{n+1}$, meaning it must grow exceptionally slow compared to $|\Tn|=2^{2^n-1}$. At the same time, case (b) requires $G_n(f)\cap K_n =\{\one\}$, which by the first isomorphism theorem implies that $G_n(f)\cong G_{n-1}(f)$. For this to be true, the splitting fields of $f^n$ and $f^{n-1}$ need to coincide, which is generally not expected for a given map $f$. Of course, these cases are still valuable, as they could prove a good base for further investigations on the properties $\p(H,G)$.

Based on many computer experiments involving MAGMA, Goksel stated a conjecture following the ideas given throughout the carefully explained examples.

\begin{conj}[\cite{Goksel}, Conjecture 6.1] \label{Conj: Goksel}
    Let $n\geq 0$, let $T_n$ denote the first $n$ levels of the rooted binary tree, and let $H\leq \Tn$ be a group that contains an element acting transitively on $\{1,...,2^n\}$. Then $\p(H,G)$ holds for any $G\leq \Tn$. 
\end{conj}

When restricting ourselves to irreducible iterates of polynomials, it is known that for many polynomials $f$ the groups $G_n(f)$ contain transitive elements, see for example \cite{Odoni}. So, a proof of Conjecture \ref{Conj: Goksel} would immediately resolve Conjecture \ref{Conj: GokselMarkov} for many quadratic polynomials $f$. Sadly, we show by a counterexample that this conjecture is false, though it is uncertain whether the groups we find are realisable as Galois groups and Markov groups. Therefore, if Conjecture \ref{Conj: GokselMarkov} is correct, then another currently undocumented property from either the Galois group or even Markov group forces a stronger relation between local and global conjugation. 

It is natural to ask the same questions for trees $T$ that are $d$-ary instead of binary. Indeed, work on higher degree cases has already started, see for example the work of Martínez in \cite{Martinez} for a study of the cubic case. Of course, since higher degree cases admit more possibilities of postcritical orbits, the Markov model needs to be altered to work. Fortunately, the Chebotarev density theorem is not restricted to quadratic maps, and so the underlying principles of the Markov model remain valid. 

The purpose of this paper is to investigate the properties $\p(H,G)$ in greater generality. Our approach is directly motivated by the behaviour of unicritical polynomials. Namely, if $f$ is a unicritical polynomial of prime degree $p$, its finite-level \emph{geometric monodromy groups} embed, up to global conjugacy, into an $n$-fold iterated wreath product $[C_p]^n$ of $C_p\cong \Z/p\Z$, see for example \cite{AdamsHyde}. In particular, these wreath products take the form $$\F_p^{V_{n-1}}\rtimes [C_p]^{n-1},$$ where $\F_p$ is the field of $p$ elements and $V_{n-1}$ is the set of words of length $n-1$ over the alphabet $\{0,...,p-1\}$. The form of these semidirect products motivates our introduction of \emph{affine extensions} as generalisation of $\Tn$ when $T$ is a binary rooted tree. Affine extensions are of the form $$\F^X\rtimes_\rho P,$$ where $X$ is a finite set, $\F$ is a finite field, $P$ is a finite group, and where $\rho:P\to \aut(\F^X)$ is a permutation representation, defined in Section \ref{Section: Preliminaries}. We note that, since $P$ can be chosen arbitrarily, the affine extensions are not restricted to subgroups of $\Tn$ for any regular rooted $d$-ary tree $T$ themselves. With this setup, we discuss various results for the property $\p(H,G)$ for subgroups $H,G\leq \F^X\rtimes_\rho P$ that form an analogue to the properties defined for subgroups of $\Tn$.

\begin{remark} \label{Re: generalisation}
    We note that affine extensions, and all the related results, can further be generalised to \emph{wreath products} $\A\wr_\rho P:=\A^X\rtimes_\rho P$, where $\rho:P\to \text{Aut}(\A^X)$ is induced by the action of $P$ on $X$, and where $\A$ is any Abelian group instead of a field. However, since the construction of the counterexample to Conjecture \ref{Conj: Goksel} in Section \ref{Section: counterexample} relies heavily on the field properties of $\F$, we restrict our attention to the field case.
\end{remark}

In Section \ref{Section: Misc}, we prove that the local-global property $\p(H,G)$ can be reduced to a case where the groups $H$ and $G$ have the same order, and discuss various results with the same idea in mind. Next, we show that Theorem \ref{Thm: Goksel's proven properties}(b) is still true for affine extensions. In contrast to the proof of Theorem \ref{Thm: Goksel's proven properties}(b) that relies on the properties of 2-groups, our proof makes use of the structure of cocycles and coboundaries in the sense of group cohomology. 

In particular, in Section \ref{Section: conjugacy results} we show that, under our conditions, local conjugation of elements naturally corresponds to a \emph{local 1-coboundary}, and that global conjugation of elements naturally corresponds to a (global) \emph{1-coboundary}. In Section \ref{Section: group cohomology}, we show that these notions coincide for permutation representations, and therefore we are able to show the property $\p(H,G)$. Since the group cohomology does not depend on $\F$ being finite, the full result does not depend on it either, and is stated as follows.

\begin{thm} \label{Thm: IntroMain1}
    Let $\rho:P\to \aut(\F^X)$ be a permutation representation over a field $\F$, and let $G\leq \F^X\rtimes_\rho P$ be a subgroup of the corresponding affine extension such that $$G\cap (\F^X\times \{\one\})=\{(\zero,\one)\}.$$ Then, for any $H\leq \F^X\rtimes_\rho P$, $\p(H,G)$ holds.
\end{thm}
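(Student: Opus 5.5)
Throughout, write $N:=\F^X\times\{\one\}$ for the normal subgroup by which we conjugate, and $\pi:\F^X\rtimes_\rho P\to P$ for the projection. The plan is to translate both elementwise and global $N$-conjugacy into statements about a single $1$-cocycle, and then to show that for permutation modules a cocycle that is "locally a coboundary" is globally a coboundary. The direction "global $\Rightarrow$ elementwise" is trivial, so only the converse needs work.

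\emph{Step 1 (describing $G$ as a cocycle).} Since $G\cap N=\{(\zero,\one)\}$, the restriction $\pi|_G$ is injective. Hence $G=\{(c(q),q):q\in Q\}$ for $Q:=\pi(G)$ and a unique function $c:Q\to\F^X$. The fact that $G$ is closed under multiplication says exactly that $c(qq')=c(q)+\rho(q)c(q')$, so $c$ is a $1$-cocycle. A direct computation of conjugation by $(k,\one)\in N$ gives
\[
(k,\one)(a,p)(k,\one)^{-1}=(a+k-\rho(p)k,\;p).
\]

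\emph{Step 2 (translating the two hypotheses).} Suppose $H$ is elementwise $N$-conjugate into $G$. By Step 1, conjugation by $N$ does not change the $P$-component, so $\pi(H)\subseteq Q$. Write $h=(a_h,p_h)$ and define
\[
d(h):=a_h-c(p_h).
\]
Then $h\mapsto a_h$ is a cocycle for $H$ acting on $M:=\F^X$ through $\rho\circ\pi$, and $c\circ\pi$ is also a cocycle, so $d$ is a $1$-cocycle $H\to M$. By the conjugation formula, elementwise conjugacy says that for each $h$ there is $k_h$ with $d(h)=(\rho(p_h)-1)k_h$. That is, $d$ restricted to every cyclic subgroup $\langle h\rangle$ is a coboundary. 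Likewise, global conjugacy by a single $k$ is equivalent to $d(h)=(\rho(p_h)-1)k$ for all $h$. This says $d$ is a coboundary, i.e.\ $[d]=0$ in $H^1(H,M)$. So it suffices to prove the following: for a permutation module $M=\F^X$, a cocycle whose restriction to every cyclic subgroup is a coboundary is itself a coboundary.

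\emph{Step 3 (the cohomological core, expected main obstacle).} First decompose $X$ into $H$-orbits. This gives $M=\bigoplus_{O}\F^{O}$, where each $\F^O\cong\operatorname{Ind}_{S}^{H}\F$ with $S=\operatorname{Stab}_H(x)$ for a chosen $x\in O$ and $\F$ the trivial module. Both cohomology and the local condition split across this direct sum, so it suffices to treat a single orbit. By Shapiro's lemma, $H^1(H,\F^O)\cong H^1(S,\F)=\operatorname{Hom}(S,\F)$. I expect the Shapiro isomorphism to be realised explicitly as restriction to $S$ followed by the $S$-equivariant projection $M\to\F$, $m\mapsto m_x$. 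If so, the class $[d]$ corresponds to the homomorphism $\varphi(s)=d(s)_x$ for $s\in S$. Now take $s\in S$. The local condition gives $d(s)=(\rho(s)-1)m$ for some $m$, and therefore
\[
d(s)_x=m_{s^{-1}x}-m_x=0,
\]
because $s$ fixes $x$. Hence $\varphi=0$, so $[d]=0$, and $H$ is globally $N$-conjugate into $G$.

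The delicate point is checking that the explicit description of the Shapiro map is correct, in particular its sign and normalisation conventions. If that identification proves awkward, I would instead build the coboundary by hand. On each orbit, set $k_{h x}:=-d(h)_{hx}$ for $h\in H$. Well-definedness of this formula is exactly the vanishing of $d(s)_x$ for $s\in S$, combined with the cocycle identity. One then verifies $d(h)=(\rho(p_h)-1)k$ directly. Nothing in this argument uses finiteness of $\F$, which is consistent with the statement being made over an arbitrary field.
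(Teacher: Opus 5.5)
Your proposal is correct and follows essentially the paper's route. Encoding $G$ as the graph of a cocycle $c$ and setting $d=a-c\circ\pi$ gives, up to sign, the paper's cocycle $\gamma(h)=(I-A_h)(t_h)$ from Theorem~\ref{Thm: Goal theorem1}; the paper gets its cocycle property from condition (ii), which the trivial intersection $G\cap K$ forces. The remaining step, ``local coboundary $\Rightarrow$ coboundary'', is Theorem~\ref{Thm: local coboundary is global coboundary}. There your Shapiro identification (restriction to $\operatorname{Stab}_H(x)$ followed by evaluation at $x$) is the correct standard form of the isomorphism for the coinduced module $\F^{O}$. The paper mentions Shapiro's lemma but proves the step directly with the orbit-representative formula $t(x)=-\gamma(g_x^{-1})(x_i)$, which is exactly your fallback construction with $k=-t$.
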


That is, Theorem \ref{Thm: IntroMain1} proves the properties $\p(H,G)$ when $N:=G\cap (\F^X\times \{\id\})$ is trivial. If $N$ is non-trivial, we can still try to apply this result, assuming only that the induced representation of $G/N$ is \emph{isomorphic} to a permutation representation, and is formally stated as follows.

\begin{thm} \label{Thm: IntroMain2}
        Let $\rho:P\to \aut(\F^X)$ be a permutation representation over a field $\F$, let $G\leq \F^X\rtimes_\rho P$ be a subgroup, let $N:=G\cap (\F^X\times \{\one\})$, and let $F\subseteq \F^X$ denote the subspace defined by $F\times \{\id\}=N$. Let $\rho_{G,N}:G/N\to \aut(\F^X/F)$ denote the induced representation as defined in (\ref{Rep: rho G N}).

        Assume that there exists a set $Y$ together with an isomorphism $\phi:\F^X/F\isomto \F^Y$, and assume that there exists a permutation representation $\tau:G/N\to \aut(\F^Y)$ such that the diagram \begin{equation} \label{Eq: isomorphism diagram}\begin{tikzcd}
            G/N \arrow[rd, "\tau"'] \arrow[r, "{\rho_{G,N}}"] & \aut(\mathbb{F}^X/F) \arrow[d, "\phi_*"] \\
             & \aut(\mathbb{F}^Y) 
            \end{tikzcd}\end{equation} commutes, where $\phi_*(A):=\phi\circ A\circ \phi^{-1}$. Then, for any $H\leq W$, $\p(H,G)$ holds.
    \end{thm}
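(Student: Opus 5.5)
The plan is to reduce Theorem \ref{Thm: IntroMain2} to Theorem \ref{Thm: IntroMain1} by passing to the quotient by $N$. Here $W=\F^X\rtimes_\rho P$ is the ambient affine extension.

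\textbf{Step 1: the normaliser and its quotient.} First I would note that $N=F\times\{\id\}$ is normal in $G$. Hence $F$ is stable under $\rho(\pi(G))$, where $\pi:W\to P$ denotes the projection. Let $P_G:=\pi(G)\le P$. Since $G$ only involves elements whose $P$-part lies in $P_G$, I would work inside the subgroup $\F^X\rtimes_\rho P_G$. Then $F$ is $P_G$-stable, and I form the quotient group
\[
\overline{W}:=(\F^X/F)\rtimes P_G ,
\]
with quotient map $q:\F^X\rtimes P_G\to\overline{W}$ whose kernel is $N$. Via $\phi$ and $\tau$, the group $\overline{W}$ is identified with an affine extension $\F^Y\rtimes_\tau(\cdots)$.

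One subtlety is that $\tau$ is defined on $G/N$ rather than on $P_G$. However, $G/N$ maps isomorphically onto $P_G$, because $G\cap\ker\pi=N$. So $\tau$ gives a permutation representation of $P_G$ compatible with $\rho_{G,N}$, by the commutativity of (\ref{Eq: isomorphism diagram}). The image $q(G)$ then meets $\F^Y\times\{\id\}$ trivially.

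\textbf{Step 2: reducing $H$ and transferring local conjugacy.} Local or global conjugacy is only possible if $\pi(H)\subseteq P_G$, since conjugation by $K=\F^X\times\{\id\}$ preserves $\pi$. So I may assume $H\le\F^X\rtimes P_G$. Global conjugacy trivially implies local conjugacy, so I only need the converse.

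If $k_hhk_h^{-1}\in G$ for each $h$, then applying $q$ gives $q(k_h)q(h)q(k_h)^{-1}\in q(G)$. Thus $q(H)$ is elementwise $\overline{K}$-conjugate into $q(G)$, where $\overline{K}=q(K)$. By Theorem \ref{Thm: IntroMain1}, applied in $\F^Y\rtimes_\tau P_G$ after transporting along $\phi$, there is a $\bar k\in\overline{K}$ with $\bar k\,q(H)\,\bar k^{-1}\subseteq q(G)$.

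\textbf{Step 3: lifting back.} Lift $\bar k$ to some $k\in K$. Then $q(khk^{-1})\in q(G)$, so $khk^{-1}\in q^{-1}(q(G))=GN=G$, since $N\le G$. Hence $kHk^{-1}\subseteq G$, which is global conjugacy.

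\textbf{Main obstacle.} The step I expect to need the most care is the claim in Step 1 that the transported action is exactly a permutation representation of the right group. In other words, I must check that conjugation by $\overline{K}$ in $\overline{W}$ corresponds under $\phi$ to conjugation by $\F^Y$ in $\F^Y\rtimes_\tau P_G$, so that Theorem \ref{Thm: IntroMain1} genuinely applies. This amounts to verifying that the map $(v,p)\mapsto(\phi(v+F),p)$ is a group isomorphism, which follows from the diagram (\ref{Eq: isomorphism diagram}) together with the identification $G/N\cong P_G$. A further point to check is that $\rho_{G,N}$ as defined in (\ref{Rep: rho G N}) indeed factors through $P_G$.
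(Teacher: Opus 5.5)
Your proposal is correct and follows essentially the same route as the paper. You quotient $\F^X\rtimes_\rho\pi(G)$ by $N$, transport along $\phi$ to an affine extension for the permutation representation $\tau$ in which the image of $G$ meets the kernel trivially, apply Theorem~\ref{Thm: IntroMain1}, and lift the conjugating element back using $q^{-1}(q(G))=GN=G$. The only differences are cosmetic: you use $\pi(G)\cong G/N$ as the acting group and carry out the push-forward and lift inline, whereas the paper keeps $G/N$ and packages that transfer as Proposition~\ref{Prop: Old main2}.
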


When talking about the automorphism groups $\Tn=\F_2^{V_{n-1}}\rtimes \Tnn$ of the first $n$ levels of the binary rooted tree, the conditions of Theorem \ref{Thm: IntroMain2} are in particular satisfied for a subgroup $G\leq \Tn$ when there exists a direct sum $\F_2^{V_{n-1}}=F\oplus F'$ such that $G\cap (\F^X\times \{\id\}) = F\times \{\id\}$ and such that $G$ acts on $F'\leq \F^X$ by permuting some basis of $F'$. A specific demonstration of this idea is given in Example \ref{Ex: stabiliser}.

Finally, we stick to the automorphism groups $\Tn$ of the first $n$ levels of the binary rooted tree, and provide a detailed study of the structure of the representation $\rho$ when $H$ contains an element acting transitively on the $n$-th level of the tree, which we call an \emph{$n$-odometer}. The full result depends on a \emph{flag} $$\{\zero\}\subset E_1\subset E_2\subset\cdots \subset E_{2^{n-1}}=\F_2^{V_{n-1}}$$ of $\F_2$-subspaces, as constructed in Section \ref{Section: counterexample}. In particular, the global conjugacy of $H$ into another group $G$ is solely determined by the $n$-odometer.

\begin{thm} \label{IntroThm: counter}
    Let $n\geq 0$, let $T_n$ denote the first $n$ levels of the binary rooted tree, and let $H,G\leq \Tn$ be subgroups such that $H$ is elementwise $K_n$-conjugate into $G$. Assume that $H\cap G$ contains an $n$-odometer.
    
    Then, $H$ is globally $K_n$-conjugate into $G$ if and only if $H\subseteq G$.
\end{thm}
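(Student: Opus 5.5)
The direction ``$H\subseteq G\Rightarrow$ global conjugacy'' is trivial: take $k=\id$. For the converse, suppose $k\in K_n$ satisfies $kHk^{-1}\subseteq G$. Write everything in the affine-extension form $\Tn=\F_2^{V_{n-1}}\rtimes\Tnn$, with $k=(v,\id)$ and $N=G\cap K_n=F\times\{\id\}$ for an $\F_2$-subspace $F\subseteq\F_2^{V_{n-1}}$. For $h=(u,p)\in H$, a direct computation gives
\[
khk^{-1}=\bigl(u+(1+p)v,\;p\bigr),\qquad\text{so}\qquad h=\bigl((1+p)v,\id\bigr)\cdot khk^{-1}.
\]
Hence $H\subseteq G$ is equivalent to the condition $(1+p)v\in F$ for every $p$ in the image $\bar H$ of $H$ in $\Tnn$. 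Since $G$ normalises $N$, the subspace $F$ is stable under the image $\bar G$ of $G$.

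Let $\sigma=(w,s)\in H\cap G$ be the $n$-odometer. Then $s$ acts as a $2^{n-1}$-cycle on $V_{n-1}$. Both $\sigma$ and $k\sigma k^{-1}$ lie in $G$, so their quotient $((1+s)v,\id)$ lies in $N$, i.e. $(1+s)v\in F$. Now I use the module structure. As an $\F_2[s]$-module, $\F_2^{V_{n-1}}\cong\F_2[x]/(x+1)^{2^{n-1}}$, which is uniserial. Its only submodules are the members of the flag
\[
E_i=\ker(1+s)^i=\im(1+s)^{2^{n-1}-i}.
\]
Since $F$ is $s$-stable, $F=E_j$ for some $j$. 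From $(1+s)v\in E_j$ we get $v\in E_{j+1}$ (if $j=2^{n-1}$ there is nothing to prove, as then $F$ is everything). So it suffices to show
\[
(1+p)E_{j+1}\subseteq E_j\quad\text{for all }p\in\Tnn.
\]
That is, the flag $\{E_i\}$ should be $\Tnn$-stable with $\Tnn$ acting trivially on each one-dimensional quotient $E_{i+1}/E_i$. This statement concerns only the tree, not $H$ or $G$.

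I expect this last claim to be the main obstacle, and I would prove it with a Reed--Muller-type description. Identify $V_{n-1}$ with $\F_2^{n-1}$, so that $\F_2^{V_{n-1}}$ becomes the space of Boolean functions in $x_1,\dots,x_{n-1}$ (here $x_1$ is the top-level coordinate). Then:
\begin{itemize}
\item[(i)] Tree automorphisms act by triangular substitutions $x_i\mapsto x_i+f_i(x_1,\dots,x_{i-1})$.
\item[(ii)] Give $x_i$ weight $2^{i-1}$ and order the $2^{n-1}$ squarefree monomials by weighted degree, which is their binary value. Because $f_i$ only involves variables of total weight less than $2^{i-1}$, each substitution sends a monomial to itself plus terms of strictly smaller weighted degree.
\item[(iii)] Consequently the filtration $D_i$ by weighted degree $<i$ is a complete flag that is $\Tnn$-stable and unipotent, i.e. has trivial action on its quotients.
\end{itemize}
Because $s\in\Tnn$, the flag $\{D_i\}$ is in particular an $s$-stable complete flag. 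By uniseriality, the only such flag is $\{E_i\}$, so $D_i=E_i$. This yields $(1+p)E_{j+1}\subseteq E_j$ and finishes the proof. Note that the elementwise conjugacy hypothesis is not needed beyond ensuring the setting; the odometer alone forces the conclusion.
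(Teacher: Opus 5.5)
Your proof is correct. Its outer structure matches the paper's. You use $k\sigma k^{-1}\sigma^{-1}=((I+A_\sigma)v,\id)\in G$ to place $v$ one step of the odometer flag above $G\cap K_n$. Then, since every $I+A_p$ lowers the flag by one step, you get $((I+A_h)v,\id)\in G$ and hence $h\in G$.

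Where you differ is in how the linear-algebra input is established. The paper proceeds as follows:
\begin{enumerate}
\item it conjugates to the standard odometer $c|_{T_n}$, using Pink's result that all $n$-odometers are conjugate;
\item it builds the explicit basis $e_m=(I+A_{c|_{T_n}})^{2^{n-1}-m}(\delta_{0^{n-1}})$ and proves $\dim E_m=m$ and the ``smallest invariant subspace'' property by hand;
\item it proves stability of the flag by induction on $n$ through the wreath recursion, computing $\Phi(e_{2m})=(d_m,\zero)$ and $\Phi(e_{2m-1})=(d_m,d_m)$ and checking the generators $(0\ 1)$ and $(\one,h)$;
\item it gets triviality on the quotients separately, from the fact that a one-dimensional $\F_2$-space admits only the identity automorphism.
\end{enumerate}

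You instead read off the submodule structure from uniseriality of $\F_2[x]/(x+1)^{2^{n-1}}$. You replace the induction by the Boolean-polynomial model, where tree automorphisms are unitriangular substitutions for the binary-weighted monomial order, so stability and unipotence come out together.

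Your route gives a closed-form description of the flag that does not depend on the odometer. Concretely, $E_m$ is spanned by the monomials $x_S$ with $\sum_{i\in S}2^{i-1}<m$. This agrees with the paper: its $e_m$ are exactly the products $\prod_{i\in S}(1+x_i)$ with $\sum_{i\in S}2^{i-1}=m-1$. As a result, the conjugation step and Pink's proposition become unnecessary in your argument.

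The paper's route is more hands-on, and it produces the explicit generators $e_m$, hence the elements $f_m$. The paper then uses these to build the counterexample and check it by computer.
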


Theorem \ref{IntroThm: counter} is the basis for a counterexample of Conjecture \ref{Conj: Goksel}, showing that in order to either prove or disprove Conjecture \ref{Conj: GokselMarkov}, we would have to do more work. The methods used to get to the theorem, and therefore the counterexample, are described in full, as we hope that these techniques are valuable in general.

The paper is organised as follows. In Section \ref{Section: Preliminaries} we recall the basics of representation theory and introduce notation and definitions used throughout the paper. We continue to study some general properties of $\p(H,G)$ for permutation representations in Section \ref{Section: Misc}. After that, we introduce and prove some technical results needed from group cohomology in Section \ref{Section: group cohomology}, then use these results to prove the corresponding conjugacy results in Section \ref{Section: conjugacy results}. In Section \ref{Section: counterexample} we provide the counterexample to Conjecture \ref{Conj: Goksel}, as well as the methods used to get to this example. Finally, we end with a short discussion in Section \ref{Section: discussion}.

\section{Preliminaries} \label{Section: Preliminaries}

    In this section, we recall definitions and facts concerning the material used in the following chapters. In Sections \ref{Section: Preliminaries: Affine extensions} and \ref{Section: Preliminaries: Conjugation} we introduce the conjugacy problems using representation theory, and in Section \ref{Section: Preliminaries: rooted binary trees explanation} we consider this theory in the case of the binary rooted trees.

    \subsection{Affine extensions} \label{Section: Preliminaries: Affine extensions}

    The groups we study arise from representations of a finite group $P$ over a finite field $\F$. However, we stress that all of these definitions can be generalised to any group $P$ and field $\F$, and that Sections \ref{Section: group cohomology} and \ref{Section: conjugacy results} do not require $\F$ to be finite.
    
    \begin{defn}
        Let $P$ be a group. A \textbf{representation $\rho$ of $P$} is a group homomorphism $\rho:P\to \text{Aut}(F)$, where $F$ is a vector space over a field $\F$. For $h\in P,v\in F$, we generally write $h(v)$ instead of $\rho(h)(v)$ when there is no confusion.
    \end{defn}

    \begin{defn}
        Let $X$ be a set, and let $\F$ be a field. We define $$\F^X:=\{f:X\to \F\}$$ to be the $\F$-vector space of functions from $X$ to $\F$.
    \end{defn}

    These spaces have a standard basis given by indicator functions $\delta_x$.

    \begin{defn} \label{Def: indicator function}
        Let $X$ be a set, and let $\F$ be a field. For each $x\in X$, we define the \textbf{indicator functions} $\delta_x:X\to \F$ by $$\delta_x(y):=\left\{\begin{matrix}
            1, & \text{if }y=x;\\
            0, & \text{if }y\neq x.
        \end{matrix}\right.$$
    \end{defn}

    Suppose that we have a group $P$, together with an action of $P$ on a set $X$. Then for each $g\in P$, the assignment \begin{align}\label{The functions A_g}
        A_g:\F^X\to \F^X, \phantom{;;;}t\mapsto t\circ g^{-1},
    \end{align} where on the right we mean to compose the maps $g^{-1}:X\to X$ and $t:X\to \F$, is a linear map. Indeed, $A_g$ acts on $\F^X$ by permuting the basis functions $\delta_x$ according to the action of $g^{-1}$ on $X$. More specifically, if $y\in X$, then \begin{align} \label{Eq: A_g acting on delta_x}
        A_g(\delta_x)(y)=\delta_x(g^{-1}(y))=\left\{\begin{matrix}
        1, &\text{if }p^{-1}(y)=x;\\
        0, &\text{if }p^{-1}(y)\neq x.
    \end{matrix}\right.
    \end{align} And therefore we get $A_g(\delta_x)=\delta_{g^{-1}(x)}$. Note that for any $g,h\in P$ we have the relation \begin{align}\label{Eq: linear maps commute with group}
        A_gA_h = A_{gh},
    \end{align} and that the matrices $A_g$ (with respect to the basis $\{\delta_x\}_{x\in X}$) are permutation matrices. 

    \begin{defn} \label{Def: permutation representation}
        Let $P$ be a finite group, let $X$ be a finite set, and let $\F$ be a finite field. A representation $\rho:P\to \text{Aut}\left(\F^X\right)$ is called a \textbf{permutation representation} if there exists an action of $P$ on $X$ such that for any $g\in P$: $$\rho(g)=A_g,$$ where $A_g:\F^X\to \F^X$ is the linear map as described in (\ref{The functions A_g}).
    \end{defn}

    In the paper \cite{Goksel}, the main point of interest are the automorphism groups of the first $n$ levels of a rooted (regular) binary tree $T_n$, denoted as $\Tn$. These groups follow a recursive identification $$\Tn\cong \F_2^{V_{n-1}}\rtimes \Tnn,$$ which we describe more rigorously in Section \ref{Section: Preliminaries: rooted binary trees explanation}. This notation, as well as the methods used in \cite{Goksel}, inspires the use of the notation $\F^X\rtimes P$, where $X$ is a finite set, $P$ is a finite group, and $\F$ is a finite field.
    
    \begin{defn}
        Let $\rho:P\to \text{Aut}(\F^X)$ be a permutation representation. The \textbf{affine extension of $\rho$} is the semidirect product $\F^X\rtimes_\rho P$. That is, the elements of $\F^X\rtimes_\rho P$ are given as pairs $(t,g)\in \F^X\times P$, and the group operation is given by the rule \begin{align}
            \label{Eq: permutation representation}(s,h)\cdot (t,g):=(s+\rho(h)(t),hg)=(s+A_h(t),hg).
        \end{align}
    \end{defn}
    
    We denote the identity element of $\F^X\rtimes_\rho P$ by $(\zero ,\one)$, where $\zero:X\to \F$ is the zero function, and $\one\in P$ is the trivial element.
    
    Finally, recall the following technical fact for normal subgroups. We omit the (standard) proof.
    
    \begin{lem} \label{Lem: normal form}
        Let $G$ be a group, let $N\trianglelefteq G$ be a normal subgroup, and assume that $H\leq G$ is a subgroup such that $G=NH$. Then, for any $g\in G$, we may write $g=nh$ for some $n\in N$ and $h\in H$. 
    \end{lem}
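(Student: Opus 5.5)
Since $N$ is normal in $G$ and $G=NH$, every $g\in G$ lies in the product set $NH=\{nh: n\in N,\ h\in H\}$. The plan is to unpack this definition directly; no deeper structure is needed.

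First, I will fix what $G=NH$ means. Because $N$ is normal, the set $NH$ is a subgroup of $G$: we have $NHNH = NNHH = NH$ using $HN=NH$, and $(nh)^{-1}=h^{-1}n^{-1}=(h^{-1}n^{-1}h)h^{-1}\in NH$. The equality $G=NH$ is an equality of sets. Hence any $g\in G$ is by definition of the form $nh$ with $n\in N$ and $h\in H$.

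If $G=NH$ is instead read as $G=\langle N,H\rangle$, I will add one reduction step. By the subgroup argument above, $NH$ is a subgroup containing both $N$ and $H$, so $\langle N,H\rangle\subseteq NH$. The reverse inclusion is immediate. The two readings therefore agree, and the conclusion follows in either case.

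The only point needing care is the step $HN=NH$, that is, $hn=(hnh^{-1})h$ with $hnh^{-1}\in N$, which uses the normality of $N$. I do not expect any genuine obstacle.
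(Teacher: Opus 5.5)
Your argument is correct and is the standard one the paper omits (it only calls the proof standard): normality gives $HN=NH$, so the product set $NH$ is a subgroup and therefore equals $\langle N,H\rangle$. Your treatment of the reading $G=\langle N,H\rangle$ is the one the paper actually needs later, where it first checks normality and then applies the lemma to the generated groups $H\cdot(G\cap K)$ and $AH$; there the factors appear in the order $hf$, which also follows from $HN=NH$.
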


    \subsection{Conjugation within affine extensions} \label{Section: Preliminaries: Conjugation} In this subsection, we work out useful conjugation relations for permutation representations. Define $$\pi:\F^X\rtimes_\rho P\to P$$ to be the projection map. Clearly, the kernel of $\pi$ equals $K:=\F^X\times \{\one\}$.

    Each element $g\in \F^X\rtimes_\rho P$ also has a natural action on $X$ via the map $\pi$. Hence, we naturally define the linear maps $A_g$ for $g\in \F^X\rtimes_\rho P$ by setting $$A_g:=A_{\pi(g)},$$ where $A_{\pi(g)}$ is defined as in (\ref{The functions A_g}). Our motivation for this notation lies with conjugation, since for $g=(s,h)\in \F^X\rtimes_\rho P$ and $(t,\one)\in K=\ker(\pi)$ we have\begin{equation} \label{Eq: conjugating t by some g}
        \begin{aligned}
            g(t,\one)g^{-1}&=(s,h)(t,\one)(s,h)^{-1}=(s+A_h(t),h)(s,h)^{-1}\\  &=(A_h(t),\one)(s,h)(s,h)^{-1}=(A_h(t),\one)=(A_g(t),\one).
        \end{aligned}
    \end{equation} Note that in (\ref{Eq: conjugating t by some g}), if $g=(s,\one)\in \F^X\times \{\one\}$, then $(t,\one)$ remains fixed. On the other hand, if we interchange the elements $g=(s,h)$ and $(t,\one)$ as in (\ref{Eq: conjugating t by some g}), then \begin{align}\label{Eq: conjugating g by some t}
        (t,\one)g(t,\one)^{-1}=(t,\one)(s,h)(t,\one)^{-1}=(t-A_h(t),\one)(s,h)=(t-A_h(t),\one)g
    \end{align} where we use that $(t,\one)^{-1}=(-t,\one)$.

    The next definition is an analogue of Definitions \ref{IntroDef: defn7.2} and \ref{IntroDef: defn7.3}, where $\pi : \F^X\rtimes_\rho P\to P$ plays the role of $\pi_n:W_n\to W_{n-1}$ and $K:=\ker \pi$ plays the role of $K_n:=\ker \pi_n$.

    \begin{defn} \label{Def: K-conjugacy}
        Let $H,G\leq \F^X\rtimes_\rho P$. We say that $H$ is \textbf{elementwise $K$-conjugate into $G$} if for every $h\in H$ there exists some $f\in K$ such that $fhf^{-1}\in G$. We say that $H$ is \textbf{globally $K$-conjugate into $G$} if there exists some $f\in K$ such that $fHf^{-1}\subseteq G$. Finally, we say that $\p_K(H,G)$ \textbf{holds} if:
    
        \begin{center}$H$ is elementwise $K$-conjugate into $G$ if and only if $H$ is globally $K$-conjugate into $G$.\end{center}
    
        Generally, when $K$ is understood, we write $\p(H,G)$ instead of $\p_K(H,G)$.
    \end{defn}

    \begin{lem}\label{Lem: Elementwise implies |H| <= |G|}
        Suppose that $H$ is elementwise $K$-conjugate into $G$. Then $|H|\leq|G|$.
    \end{lem}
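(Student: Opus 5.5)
The plan is to compare $H$ and $G$ through their images under $\pi$ and their intersections with $K$. Since $H$ is elementwise $K$-conjugate into $G$, I will show both $\pi(H)\subseteq\pi(G)$ and $|H\cap K|\leq |G\cap K|$. The bound on $|H|$ then follows from the first isomorphism theorem.

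For the images, take $h\in H$ and $f\in K$ with $fhf^{-1}\in G$. Because $K=\ker\pi$, we have $\pi(fhf^{-1})=\pi(f)\pi(h)\pi(f)^{-1}=\pi(h)$. Hence $\pi(h)\in\pi(G)$, which gives $\pi(H)\subseteq\pi(G)$ and so $|\pi(H)|\leq|\pi(G)|$.

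For the kernels, take $h=(t,\one)\in H\cap K$ and $f\in K$ with $fhf^{-1}\in G$. By (\ref{Eq: conjugating t by some g}) applied to $g=f=(s,\one)$, conjugation by an element of $K$ fixes $(t,\one)$, since $A_f=A_{\one}$ is the identity. Equivalently, $K\cong\F^X$ is abelian. So $fhf^{-1}=h$, which lies in $G$, and hence $H\cap K\subseteq G\cap K$.

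Finally, $\pi$ restricted to $H$ has kernel $H\cap K$, so $|H|=|H\cap K|\cdot|\pi(H)|$, and likewise $|G|=|G\cap K|\cdot|\pi(G)|$. Multiplying the two inequalities gives $|H|\leq|G|$. The only subtlety I expect is the finiteness needed for these counting identities. This holds because $\F$, $X$ and $P$ are finite in the setting of Definition \ref{Def: permutation representation}. In general the argument still yields the injections $H\cap K\hookrightarrow G\cap K$ and $\pi(H)\hookrightarrow\pi(G)$, which is the cardinality statement in any case. There is no real obstacle beyond noting that $K$ is abelian.
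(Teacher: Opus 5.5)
Your proof is correct and is essentially the paper's argument. Conjugating by elements of $K$ preserves $\pi$-images, so $\pi(H)\subseteq\pi(G)$. Elements of $H\cap K$ are fixed under $K$-conjugation, so $H\cap K\subseteq G\cap K$. The first isomorphism theorem then gives $|H|=|H\cap K|\cdot|\pi(H)|\leq|G\cap K|\cdot|\pi(G)|=|G|$.
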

    
    \begin{proof}
        If $t\in \F^X$ and $g\in \F^X\rtimes_\rho P$, then by (\ref{Eq: conjugating g by some t}) we have $\pi((t,\one)g(t,\one)^{-1})=\pi(g)$. Moreover, by (\ref{Eq: conjugating t by some g}), we know that each element of $H\cap K$ is fixed by conjugation by any element from $K$. As a result, $H\cap K\subseteq G\cap K$. Using the first isomorphism theorem, we find: $$|H|=|H\cap K|\cdot |\pi(H)|\leq |G\cap K|\cdot |\pi(G)|=|G|.$$
    \end{proof}

    Within the upcoming sections, we consider linear maps given by sums and products of the maps $A_g$. For any subgroup $G\subseteq \F^X\rtimes_\rho P$, define \begin{center}
        \fbox{$R(G):=\F\langle A_g\mid g\in G\rangle\subseteq \text{End}_\F\left(\F^X\right)$}
    \end{center} to be the $\F$-subalgebra of $\text{End}_\F\left(\F^X\right)$ generated by the linear maps $\{A_g\mid g\in G\}$. We write $I:=A_{(\zero,\one)}$ for the linear map corresponding to the trivial element $(\zero,\one)\in G$, and note that $I:\F^X\to \F^X$ is the identity map. For the rest of the paper, whenever we add or multiply linear maps $A_g$, we mean the corresponding linear map as an element of $R(G)$. 
    
    Note that multiplication and addition in $R(G)$ naturally correspond to elementwise conjugation of $\F^X\times \{\one\}$ by $g,h\in G$. Namely, if $t\in \F^X$, we have: $$(A_gA_h(t),\one)=gh(t,\one)(gh)^{-1}=(A_{gh}(t),\one),$$ and $$((A_g+A_h)(t),\one)=(A_g(t),\one)(A_h(t),\one)=g(t,\one)g^{-1}h(t,\one)h^{-1},$$ where we use (\ref{Eq: conjugating t by some g}) in both cases.
        
    \subsection{Rooted binary trees} \label{Section: Preliminaries: rooted binary trees explanation}
        Examples of affine extensions, as studied in \cite{Goksel}, include automorphism groups of rooted binary trees. In this subsection, we formally introduce related definitions, as well as show that these groups are isomorphic to affine extensions by $\F_2$-vector spaces.

        We define the infinite rooted binary tree $T=(V,E)$, together with a natural labelling, as the following graph. Consider the alphabet $X=\{0,1\}$, and let $\varepsilon$ denote the empty word over $X$. We set $V_0:=\{\varepsilon\}$, and for each $k\geq 1$, we define $$V_k:=\{v_1v_2\cdots v_k\mid v_i\in X\}$$ to be the set of words of length $k$ over $X$. The vertices of $T$ are given by $$V:=\bigsqcup_{k\geq 0} V_k,$$ and we connect vertices $v\in V_k$ and $w\in V_{k+1}$ with an edge if and only if $w=vx$ for some $x\in X$. The sets $V_k$ denote the \emph{$k$-th level sets}. For any $n\geq 0$, we let $T_n$ denote the subgraph of $T$ with vertex set $\bigsqcup_{k=0}^n V_k$, and as edge set all edges of $T$ that connect vertices of $T_n$. See Figure \ref{fig:T_3 for binary} for $T_3$.

        \begin{figure}[h!] 
        \begin{center}
            \begin{tikzpicture}[main/.style = {draw, circle}]

                \node[main] (0) at (0,0) {};
            
                \node[main] (11) at (-2,-1) {};
                \node[main] (12) at (2,-1) {};
            
                \node[main] (21) at (-3,-2) {};
                \node[main] (22) at (-1,-2) {};
                \node[main] (23) at (1,-2) {};
                \node[main] (24) at (3,-2) {};
            
                \node[main] (1) at (-3.5,-3) {};
                \node[main] (2) at (-2.5,-3) {};
                \node[main] (3) at (-1.5,-3) {};
                \node[main] (4) at (-0.5,-3) {};
                \node[main] (5) at (0.5,-3) {};
                \node[main] (6) at (1.5,-3) {};
                \node[main] (7) at (2.5,-3) {};
                \node[main] (8) at (3.5,-3) {};
            
                \node at (0,.4) {$\varepsilon$};

                \node at (-2.3,-.7) {$0$};
                \node at (2.3,-.7) {$1$};

                \node at (-3.3,-1.7) {$00$};
                \node at (-.7,-1.7) {$01$};
                \node at (.7,-1.7) {$10$};
                \node at (3.3,-1.7) {$11$};

                \node at (-3.5,-3.4) {$000$};
                \node at (-2.5,-3.4) {$001$};
                \node at (-1.5,-3.4) {$010$};
                \node at (-0.5,-3.4) {$011$};
                \node at (0.5,-3.4) {$100$};
                \node at (1.5,-3.4) {$101$};
                \node at (2.5,-3.4) {$110$};
                \node at (3.5,-3.4) {$111$};
            
                \draw (0)--(11);
                \draw (0)--(12);
                \draw (11)--(21);
                \draw (11)--(22);
                \draw (12)--(23);
                \draw (12)--(24);
                \draw (21)--(1);
                \draw (21)--(2);
                \draw (22)--(3);
                \draw (22)--(4);
                \draw (23)--(5);
                \draw (23)--(6);
                \draw (24)--(7);
                \draw (24)--(8);
            
            \end{tikzpicture}
        \end{center}
        \caption{The first three levels of a regular rooted binary tree.}
        \label{fig:T_3 for binary}
    \end{figure}

        In addition, for some vertex $v\in V_k$ we define $vT$ to be the subgraph of $T$ with vertex set consisting of the words starting with $v$, and with edge set all edges of $T$ that connect vertices of $vT$. By identifying $v$ as the new root of $vT$ we can readily see that the induced map \begin{align}\label{Eq: section map}\eta_v:vT\to T, \phantom{::::}vw\mapsto w,\end{align} is a graph isomorphism.

        Fix an integer $n\geq 1$, consider the graph $T_n$, and define $\Tn$ as its automorphism group. That is, it is the set of bijections $\bigsqcup_{k=0}^nV_k\to \bigsqcup_{k=0}^nV_k$ that preserve adjacency of vertices in $T_n$. Automorphisms of graphs preserve degrees of vertices, and therefore, since $\varepsilon$ is the unique vertex of degree 2, the root $\varepsilon$ is always fixed.

        In addition, since $V_k$ is the set of vertices such that the shortest path to $\varepsilon$ consists of exactly $k$ edges, each $g\in \Tn$ provides a bijection of $V_k$ for $0\leq k\leq n$. With this in mind, we define \emph{sections} as follows.
        
        \begin{defn}
            Let $g\in \Tn$, $0\leq k\leq n$ be an integer, and let $v\in V_k$. The \textbf{section} of $g$ at $v$, denoted $g_v$, is given by $$g_v:=\eta_{g(v)}\circ g\circ \eta_v^{-1}\in \aut(T_{n-k}),$$ where $\eta_v:vT\to T$ is the graph isomorphism defined in (\ref{Eq: section map}).
        \end{defn}

        As noted before, each $g\in\Tn$ can be restricted to $T_k$ for any $0\leq k\leq n$. That is, we have well defined (surjective) homomorphisms $\Tn\to \aut(T_k)$ given by $g\mapsto g|_{T_k}$. When $k=n-1$, we particularly let \begin{align*}
            \pi_n:\Tn\to \Tnn,\phantom{;;;}g\mapsto g|_{T_{n-1}}
        \end{align*} and define $K_n:=\ker(\pi_n)$.

        Next, we demonstrate how to obtain $\Tn$ from $\Tnn$ recursively, in two different manners. The first, which we return to in Chapter \ref{Section: counterexample}, is given as follows.

        Let $g\in \Tn$, let $v\in V_k$ for $1\leq k\leq n$, and write $v=xw$ for $w\in V_{k-1}$ and $x\in \{0,1\}$. Since the sections $g_0,g_1$ of $g$ are automorphisms of $T_{n-1}$, we must get \begin{align} \label{Eq: formula acting on words}
            g(v)=g(xw)=\sigma_g(x)g_{\sigma_g^{-1}(x)}(w),
        \end{align} where $\sigma_g\in \aut(T_1)$ is a bijection of $V_1=\{0,1\}$, and where $g_0,g_1\in \Tnn$. It follows that we may write \begin{align} \label{Eq: original wreath recursion}
            g=(g_{\sigma_g^{-1}(0)},g_{\sigma_g^{-1}(1)})\sigma_g.
        \end{align} That is, $g$ acts on the word $xw\in V_k$ by first acting with $\sigma_g$ on $x$, and then by applying $g_{\sigma^{-1}_g(x)}$ to the subtree $xT$. In particular, from (\ref{Eq: formula acting on words}) it can readily be seen that \begin{align} \label{Eq: Forward wreath recursion}
            \Tn\cong \Tnn^{V_1}\rtimes \text{Aut}(T_1)\cong \Tnn^2\rtimes S_2,
        \end{align} where $\Tnn^{V_1}$ is the set of functions $f:V_1\to \Tnn$, forming a group with coordinate-wise multiplication, and where the action of $\sigma\in\aut(T_1)$ on $\Tnn^{V_1}$ is given by $\sigma(t)=t\circ \sigma^{-1}:V_1\to \Tnn$. That is, $\sigma\in \aut(T_1)$ permutes the $V_1$ copies of $\Tnn$ according to the action of $\sigma^{-1}$ on $V_1$.

        \begin{example} \label{Ex: the element (01)}
            Consider the automorphism $g\in\Tn$ that acts on a word $v=v_1\cdots v_n\in V_n$ by $$g(v_1\cdots v_n)=(1-v_1)v_2\cdots v_n.$$ That is, it only changes the first letter $v_1\in \{0,1\}$. Let $(0\ 1)\in S_2$ denote the non-trivial permutation of $\{0,1\}$. Since $g_v=\one$ whenever $v\neq \varepsilon$, it follows that we can write $$(0\ 1):=g=(\one,\one)\cdot (0\ 1)\in \Tn^2\rtimes S_2,$$ using the notation of (\ref{Eq: original wreath recursion}) and (\ref{Eq: Forward wreath recursion}).
        \end{example}

        \begin{example} \label{Ex: the element (g_0,g_1)}
            Consider any pair of automorphisms $g_0,g_1\in \Tnn$, and let $g$ denote the automorphism such that for any word $v\in V_{n-1}$ we have $$g(iv)=ig_i(v),$$ for $i=0,1$. Then $g$ acts trivally on words of length 1, and therefore, we can write $$(g_0,g_1):=g=(g_0,g_1)\cdot \one \in \Tnn^2\rtimes S_2,$$ following the notation of (\ref{Eq: original wreath recursion}) and (\ref{Eq: Forward wreath recursion}).
        \end{example}

        Instead of describing the recursion for words $v=xw$ with $v\in V_k$, $x\in \{0,1\}$ and $w\in V_{k-1}$, we can alternatively describe the recursion for the words of the form $wx$. In a similar fashion to (\ref{Eq: formula acting on words}), taking $v=wx$, we get \begin{align}\label{Eq: formula acting on words from the back}g(v)=g(wx)=g(w)g_{g^{-1}(w)}(x).\end{align} It follows that (\ref{Eq: formula acting on words from the back}) gives an isomorphism \begin{align}\label{Eq: backward wreath recursion}
            \Tn\cong \aut(T_1)^{V_{n-1}}\rtimes \Tnn\cong S_2^{V_{n-1}}\rtimes \Tnn,
        \end{align} where the action of $g\in \Tnn$ on $\aut(T_1)^{V_{n-1}}$ is given by $g(t)=t\circ g^{-1}:V_{n-1}\to \aut(T_1)$. When identifying $\aut(T_1)\cong \F_2$, we can further view $\aut(T_1)^{V_{n-1}}$ as a vector space over $\F_2$. In particular, each $g\in \Tnn$ acts on $\F_2^{V_{n-1}}$ by permuting the standard basis elements, so that $$\Tn\cong \F_2^{V_{n-1}}\rtimes \Tnn$$ is the affine extension of a permutation representation $\rho:\Tnn\to \F^{V_{n-1}}_2$.

        To be precise, for each $g\in \Tnn$, the maps $A_g$ as in (\ref{The functions A_g}) are given by \begin{align}
            A_g:\F_2^{V_{n-1}}\to \F_2^{V_{n-1}}\phantom{;;;}t\mapsto t\circ (g^{-1}|_{V_{n-1}}),
        \end{align} and the induced representation $$\rho:\Tnn\to \aut\left(\F_2^{V_{n-1}}\right),\phantom{;;;} g\mapsto A_g$$ is a permutation representation by Definition \ref{Def: permutation representation}. Hence, we can apply the material from Sections \ref{Section: Preliminaries: Affine extensions} and \ref{Section: Preliminaries: Conjugation}. Just like in Section \ref{Section: Preliminaries: Conjugation}, we define $A_g$ for $g\in \Tn$, via $$A_g:=A_{\pi(g)}=A_{g|_{T_{n-1}}},$$ obtaining a representation of $\F_2^{V_{n-1}}\rtimes \Tnn=\Tn$ onto $\aut(\F^{V_{n-1}}_2)$. The linear maps $A_g:\F_2^{V_{n-1}}\to \F_2^{V_{n-1}}$ can be computed recursively using the semi-direct product structure $g\in \Tnn^2\rtimes S_2$ as in (\ref{Eq: original wreath recursion}). The following result can be used as a blueprint for this action, making use of the fact that, if $t\in \F_2^{V_{n-1}}$, we can naturally split $t$ into two maps $t|_0,t|_1:V_{n-2}\to \F_2$ by setting for $i\in \{0,1\}$: $$t|_i:V_{n-2}\to \F_2,\phantom{;;;} v\mapsto t(iv),$$ where $iv\in V_{n-1}$ is the concatenation of $i$ and $v\in V_{n-2}$. We leave out the proof, since it is a straightforward computation.

        \begin{prop} \label{Prop: blueprint for action}
            Split $\F_2^{V_{n-1}}\cong\F_2^{V_{n-2}}\oplus \F_2^{V_{n-2}}$ via the isomorphism $$\Phi:t\mapsto (v\mapsto t(0v),v\mapsto t(1v)).$$ For any $g\in \Tn$, define the action of $A_g$ on $\F_2^{V_{n-2}}\oplus \F_2^{V_{n-2}}$ by $$A_g(t_0,t_1):=\Phi(A_g(\Phi^{-1}(t_0,t_1))).$$ Then, for $(0\ 1):=(\one,\one)\cdot (0\ 1)\in \Tnn^2\rtimes S_2=\Tn$, the induced action of $A_{(0\ 1)}$ under $\Phi$ is given by $$A_{(0\ 1)}(t_0,t_1)=(t_1,t_0).$$ Furthermore, for $(g_0,g_1):=(g_0,g_1)\cdot \one\in \Tnn^2\rtimes S_2=\Tn$, the induced action of $A_{(g_0,g_1)}$ under $\Phi$ is given by $$A_{(g_0,g_1)}(t_0,t_1)=(A_{g_0}(t_0),A_{g_1}(t_1)).$$
        \end{prop}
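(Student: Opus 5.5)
The statement is a direct unwinding of the identifications $\Tn\cong\Tnn^2\rtimes S_2$ from (\ref{Eq: original wreath recursion}) and $\Tn\cong\F_2^{V_{n-1}}\rtimes\Tnn$, so the plan is to compute $A_g$ on the basis and compare. Since $A_g$ and $\Phi$ are linear, it suffices to check each formula on the standard basis $\{\delta_w\}_{w\in V_{n-1}}$ of $\F_2^{V_{n-1}}$.

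The splitting $\Phi$ sends a basis vector $\delta_{iv}$ (with $i\in\{0,1\}$, $v\in V_{n-2}$) to $(\delta_v,0)$ if $i=0$ and to $(0,\delta_v)$ if $i=1$. The tool we need is the formula $A_g(\delta_x)=\delta_{g(x)}$, which comes from unwinding $A_g(t)=t\circ g^{-1}$: indeed $\delta_x(g^{-1}(y))=1$ exactly when $y=g(x)$. Equivalently, for every $t$ and $w$ we have $(A_g t)(w)=t(g^{-1}(w))$, and we can also work with this pointwise form. Throughout, $g$ acts on $V_{n-1}$ through its restriction $\pi_n(g)$.

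\textbf{Swap element.} For $g=(0\ 1)$, Example \ref{Ex: the element (01)} gives $g^{-1}=g$ and $g(iv)=(1-i)v$ on $V_{n-1}$. Then
\[(A_g t)(0v)=t(1v),\qquad (A_g t)(1v)=t(0v).\]
Under $\Phi$ this reads $A_{(0\ 1)}(t_0,t_1)=(t_1,t_0)$.

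\textbf{Diagonal element.} For $g=(g_0,g_1)$, Example \ref{Ex: the element (g_0,g_1)} gives $g(iv)=i\,g_i(v)$, hence $g^{-1}(iv)=i\,g_i^{-1}(v)$. Then
\[(A_g t)(iv)=t\bigl(i\,g_i^{-1}(v)\bigr)=t_i\bigl(g_i^{-1}(v)\bigr)=(A_{g_i}t_i)(v),\]
where $A_{g_i}$ acts on $\F_2^{V_{n-2}}$ via $g_i|_{V_{n-2}}$. Under $\Phi$ this is $(A_{g_0}t_0,A_{g_1}t_1)$.

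There is no real obstacle here. The only care needed is bookkeeping: keeping $g$ and $g^{-1}$ straight, since the paper's display (\ref{Eq: A_g acting on delta_x}) mixes $g^{-1}(x)$ with $g(x)$. The pointwise formula $(A_gt)(w)=t(g^{-1}(w))$ sidesteps that ambiguity entirely.
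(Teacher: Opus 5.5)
Your proof is correct. It is exactly the direct computation the paper omits as ``straightforward'': evaluating $(A_gt)(iv)=t(g^{-1}(iv))$ pointwise for the swap and diagonal generators gives both formulas, with $A_{g_i}$ acting through $g_i|_{V_{n-2}}$ as the convention $A_g:=A_{\pi(g)}$ requires. Your side remark is also right: $A_g(\delta_x)=\delta_{g(x)}$, not $\delta_{g^{-1}(x)}$ as the paper's display states, though your pointwise argument does not depend on this.
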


        Since the elements $(0\ 1)$ and $(g_0,g_1)$ (where $g_0,g_1\in \Tnn$) as in Examples \ref{Ex: the element (01)} and \ref{Ex: the element (g_0,g_1)} generate $\Tn$, the combination of Proposition \ref{Prop: blueprint for action} and the relation (\ref{Eq: linear maps commute with group}) makes it possible to compute any linear map $A_g=A_{(g_{\sigma^{-1}(0)},g_{\sigma^{-1}(1)})}A_{\sigma}$ whenever $g=(g_{\sigma^{-1}(0)},g_{\sigma^{-1}(1)})\sigma$.

    \subsection{Notation}\label{Section: notation} Throughout the rest of the paper, we use the following notation:

    \begin{itemize}
        \item We let $P$ be a finite group.
        \item We let $\F$ be a finite field, with the exception of Sections \ref{Section: group cohomology} and \ref{Section: conjugacy results}, where $\F$ may be infinite.
        \item We let $X$ be a finite set, together with an action of $P$.
        \item For each $g\in P$, we let $A_g:\F^X\to \F^X$ denote the linear map $t\mapsto t\circ g^{-1}$.
        \item We let $\rho: P\to \aut(\F^X)$ given by $\rho(g):=A_g$ denote the associated permutation representation.
        \item We let $\F^X\rtimes_\rho P$ denote the corresponding affine extension of $P$.
        \item We let $(\zero,\one)\in \F^X\rtimes_\rho P$ denote the identity element.
        \item We let $\pi:\F^X\rtimes_\rho P\to P$ denote the restriction homomorphism.
        \item We let $K:=\ker(\pi)=\F^X\times\{\id\}$.
    \end{itemize}

    As we have noted in Remark \ref{Re: generalisation}, each of these notions, and all results outside of Section \ref{Section: counterexample}, can be generalised to wreath products $\A\wr P$ by taking an arbitrary Abelian group $\mathbb{A}$ instead of the field $\F$, since the field properties are not used throughout the proofs.

\section{Reducing local-global conjugacy to the equal order case} \label{Section: Misc}

    In this section, we discuss the properties $\p(H,G)$ for subgroups $H,G\leq \F^X\rtimes_\rho P$ as described in Section \ref{Section: Preliminaries}. Since determining whether a property $\p(H,G)$ holds or not can be difficult, it can be useful to know when we can replace this property by an equivalent property for another pair of subgroups. Throughout this section, we use the notation of Section \ref{Section: notation}. The following result holds.

    \begin{thm} \label{thm: reducing the problem to equal size}
        Let $H,G\leq \F^X\rtimes_\rho P$. Define $G_0:=\pi^{-1}(\pi(H))\cap G$ and $H_0:=H\cdot (G\cap K)$. Then the following are equivalent:
        
        \begin{itemize}
            \item[(i)] $\p(H,G)$;
            \item[(ii)] $\p(H,G_0)$;
            \item[(iii)] $\p(H_0,G)$;
            \item[(iv)] $\p(H_0,G_0)$.
        \end{itemize}
        
        Moreover, if $H$ is elementwise $K$-conjugate into $G$, then $|H_0|=|G_0|$.
    \end{thm}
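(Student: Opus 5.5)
The plan rests on two facts already visible in \eqref{Eq: conjugating t by some g} and \eqref{Eq: conjugating g by some t}. First, conjugation by an element $k\in K$ preserves $\pi$: $\pi(kgk^{-1})=\pi(g)$. Second, $K$ is abelian, so every $k\in K$ commutes with every element of $K$, and in particular with $G\cap K$. The strategy is to show that replacing $G$ by $G_0$, or $H$ by $H_0$, changes neither the elementwise statement nor the global statement. Then each of (i)--(iv) is an equivalence between the same two truth values.

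For (i)$\Leftrightarrow$(ii), the first step is to show, for any $k\in K$ and $h\in H$, that $khk^{-1}\in G$ if and only if $khk^{-1}\in G_0$. One direction holds because $G_0\subseteq G$. For the other, $\pi(khk^{-1})=\pi(h)\in\pi(H)$, so $khk^{-1}\in\pi^{-1}(\pi(H))\cap G=G_0$. Applying this elementwise gives that $H$ is elementwise $K$-conjugate into $G$ iff it is elementwise $K$-conjugate into $G_0$. Applying it to all of $kHk^{-1}$ at once gives the same equivalence for global conjugacy. Hence $\p(H,G)\Leftrightarrow\p(H,G_0)$. The same argument with $H_0$ in place of $H$ gives (iii)$\Leftrightarrow$(iv).

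For (i)$\Leftrightarrow$(iii), I first deal with a subtlety: $H_0=H\cdot(G\cap K)$ need not be a subgroup in general. I will show it is one whenever $H$ is elementwise $K$-conjugate into $G$. In that case, for $h\in H$ choose $k$ with $g:=khk^{-1}\in G$. For $n\in G\cap K$, \eqref{Eq: conjugating t by some g} gives $hnh^{-1}=A_h(n)=A_g(n)=gng^{-1}\in G\cap K$. So $H$ normalises $G\cap K$, and $H_0$ is a subgroup.

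Now take $k\in K$, $h\in H$ and $n\in G\cap K$. Since $k$ and $n$ commute, $k(hn)k^{-1}=(khk^{-1})n$, which lies in $G$ iff $khk^{-1}$ does. Thus $kH_0k^{-1}=(kHk^{-1})(G\cap K)\subseteq G$ iff $kHk^{-1}\subseteq G$. Elementwise, an element $hn$ is conjugated into $G$ by any $k$ that conjugates $h$ into $G$, and conversely $H\subseteq H_0$. So the local and global statements for $H_0$ coincide with those for $H$.

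If $H$ is not elementwise $K$-conjugate into $G$, then $H$ is not globally $K$-conjugate into $G$ either. Since $H\subseteq H_0$, the same two failures hold for $H_0$. So both properties hold vacuously, whichever way one interprets $H_0$.

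For the order statement, assume $H$ is elementwise $K$-conjugate into $G$. The proof of Lemma~\ref{Lem: Elementwise implies |H| <= |G|} gives $H\cap K\subseteq G\cap K$. Hence
$$|H_0|=\frac{|H|\,|G\cap K|}{|H\cap G\cap K|}=\frac{|H|\,|G\cap K|}{|H\cap K|}=|\pi(H)|\,|G\cap K|.$$
On the other side, $\ker(\pi|_{G_0})=G\cap K$ and $\pi(G_0)=\pi(G)\cap\pi(H)$. Every $\pi(h)$ equals $\pi(khk^{-1})\in\pi(G)$, so $\pi(G_0)=\pi(H)$, and therefore $|G_0|=|\pi(H)|\,|G\cap K|=|H_0|$.

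The main obstacle I expect is the subgroup issue for $H_0$, namely that $H$ normalises $G\cap K$ only under the elementwise hypothesis. The normalisation argument above is how I plan to handle it, together with the vacuous case.
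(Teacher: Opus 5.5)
Your proof is correct, but it is organised differently from the paper's. The paper proves the cycle (i)$\Rightarrow$(ii)$\Rightarrow$(iii)$\Rightarrow$(iv)$\Rightarrow$(i), and each step passes through the hypothesised property. For example, in (ii)$\Rightarrow$(iii) it invokes $\p(H,G_0)$ to get a global conjugator for $H$, and then extends that conjugator to $H_0$ because $K$ fixes $G\cap K$ pointwise.

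You instead prove a stronger statement that does not use $\p$ at all. You show that each predicate separately, elementwise conjugacy and global conjugacy, is unchanged when $G$ is replaced by $G_0$ and when $H$ is replaced by $H_0$. Then (i)--(iv) are literally the same biconditional. This makes the equivalence transparent and symmetric. It also shows that nothing is needed beyond the paper's Claim (conjugation by $K$ preserves $\pi$) and the commutativity of $K$. One step is left implicit: for (iii)$\Leftrightarrow$(iv), the Claim applies to $H_0$ because $\pi(H_0)=\pi(H)$, which holds since $G\cap K\subseteq\ker\pi$.

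You are also more careful than the paper about whether $H_0$ is a subgroup. The paper shows that $H$ normalises $G\cap K$ only inside (iv)$\Rightarrow$(i) and never remarks that $H_0$ can fail to be a subgroup. You point out that this failure can only occur when $H$ is not elementwise $K$-conjugate into $G$, where all four properties hold trivially when read for subsets.

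For the order statement, you use the product formula $|H(G\cap K)|=|H|\,|G\cap K|/|H\cap K|$. The paper instead shows $H_0\cap K=G\cap K$ and $\pi(H_0)=\pi(G_0)$ and then applies the first isomorphism theorem to $H_0$. These are equivalent routes.
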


    Since for any $h\in \F^X\rtimes_\rho P$ and $v\in K$ we have $\pi(h)=\pi(vhv^{-1})$, the following claim is readily true, and is used several times throughout the proof.

    \begin{claim} \label{Claim: equivalence}
        If for some $h\in H_0$ and $v\in K$ we have $vhv^{-1}\in G$, then we also have $vhv^{-1}\in G_0:=\pi^{-1}(\pi(H))\cap G$.
    \end{claim}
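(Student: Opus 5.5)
The claim is a direct unwinding of the definitions, using the observation recorded just before it. The plan is to show that the conjugate $vhv^{-1}$ lies in both sets making up $G_0=\pi^{-1}(\pi(H))\cap G$. Membership in $G$ is the hypothesis, so the only real content is checking that $\pi(vhv^{-1})\in\pi(H)$.

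First, I will compute $\pi(vhv^{-1})$. Since $v\in K=\ker(\pi)$ and $\pi$ is a homomorphism, $\pi(vhv^{-1})=\pi(v)\pi(h)\pi(v)^{-1}=\pi(h)$. Alternatively, this follows from (\ref{Eq: conjugating g by some t}), which shows $vhv^{-1}=(t-A_h(t),\one)h$ for $v=(t,\one)$, so the second coordinate is unchanged.

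Second, I will check that $\pi(h)\in\pi(H)$ even though $h$ is only assumed to lie in $H_0=H\cdot(G\cap K)$ rather than in $H$. I will write $h=h'k$ with $h'\in H$ and $k\in G\cap K$. Since $G\cap K\subseteq K=\ker\pi$, we get $\pi(h)=\pi(h')\pi(k)=\pi(h')\in\pi(H)$. Hence $\pi(H_0)=\pi(H)$.

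Combining the two steps, $vhv^{-1}\in\pi^{-1}(\pi(H))$. Together with the hypothesis $vhv^{-1}\in G$, this gives $vhv^{-1}\in G_0$. I do not expect any real obstacle. The one point needing care is the second step: $H_0$ may be strictly larger than $H$, and the claim works only because the extra factor lies in the kernel of $\pi$.
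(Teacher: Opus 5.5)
Your proposal is correct and follows the paper's reasoning. The paper justifies the claim solely by the observation $\pi(vhv^{-1})=\pi(h)$ for $v\in K$, and you additionally make explicit the step it leaves implicit, $\pi(H_0)=\pi(H)$ since $G\cap K\subseteq\ker\pi$; this holds even if $H_0$ is read as the subgroup generated by $H$ and $G\cap K$, because $\pi$ of any product of such elements lies in $\pi(H)$.
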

    
    \begin{proof}[Proof of Theorem \ref{thm: reducing the problem to equal size}]
        (i)$\Rightarrow$(ii). Suppose that $H$ is elementwise $K$-conjugate into $G_0$. Since $G_0$ is a subgroup of $G$, we know that $H$ is elementwise $K$-conjugate into $G$. By $\p(H,G)$ we know that $H$ is globally $K$-conjugate into $G$. By Claim \ref{Claim: equivalence}, $H$ is also globally $K$-conjugate into $G_0$. That is, $\p(H,G_0)$ holds.
    
        (ii)$\Rightarrow$(iii). Suppose that $H_0$ is elementwise $K$-conjugate into $G$. Then also $H\subseteq H_0$ is elementwise $K$-conjugate into $G$. By Claim \ref{Claim: equivalence} we see that $H$ is elementwise $K$-conjugate into $G_0$. Hence by $\p(H,G_0)$, we know that $H$ is globally $K$-conjugate into $G_0$, and therefore $H$ is globally $K$-conjugate into $G$, say $vHv^{-1}\subseteq G$. But since $G\cap K$ is pointwise fixed under conjugation by $K$ by (\ref{Eq: conjugating t by some g}), we get $$vH_0v^{-1}=vHv^{-1}\cdot v(G\cap K)v^{-1}\subseteq G\cdot (G\cap K)=G$$ Hence $\p(H_0,G)$ holds.
    
        (iii)$\Rightarrow$(iv). $\p(H_0,G)\Rightarrow \p(H_0,G_0)$ has the same proof as $\p(H,G)\Rightarrow \p(H,G_0)$, by substituting $H_0$ for $H$.
    
        (iv)$\Rightarrow$(i) Suppose $H$ is elementwise $K$-conjugate into $G$. We first show that $G\cap K$ is a normal subgroup of $H_0$. Indeed, $G\cap K$ is pointwise fixed under conjugation by $G\cap K$ by (\ref{Eq: conjugating t by some g}), and for any $h=(t,f)\in H$ we can choose $g\in G$ such that $g=(s,f)$ by (\ref{Eq: conjugating g by some t}). As a result, we get \begin{align} \label{Eq: conjugation doesn't change}
            h(t,\one)h^{-1}=(A_h(t),\one)=(A_g(t),\one)=g(t,\one)g^{-1}
        \end{align} by (\ref{Eq: conjugating t by some g}). Since $G\cap K$ is a normal subgroup of $G$, (\ref{Eq: conjugation doesn't change}) implies that $G\cap K$ is a normal subgroup of $H\cdot (G\cap K) =H_0$.

        By Lemma \ref{Lem: normal form}, we can write each element of $H_0$ as $hf$ for some $h\in H$ and $f\in G\cap K$. Now choose $v\in K$ such that $vhv^{-1}\in G$. Then $$v(hf)v^{-1}=vhv^{-1}\cdot vfv^{-1}=vhv^{-1}\cdot f\in G\cdot (G\cap K)=G,$$ so $H_0$ is elementwise $K$-conjugate into $G$. By Claim \ref{Claim: equivalence} it follows that $H_0$ is elementwise $K$-conjugate into $G_0$. By $\p(H_0,G_0)$, we know that $H_0$ is globally $K$-conjugate into $G_0$, say $vH_0v^{-1}\subseteq G_0$ for some $v\in K$. We conclude that $$vHv^{-1}\subseteq vH_0v^{-1}\subseteq G_0\subseteq G,$$ showing $\p(H,G)$.
        
        Finally, we show that $|H_0|=|G_0|$ whenever $H$ is elementwise $K$-conjugate into $G$. As shown in the proof of (iv)$\Rightarrow$(i), this implies that $H_0$ is elementwise $K$-conjugate into $G_0$. Thus, since $H_0\cap K$ is pointwise fixed under conjugation by $K$ by (\ref{Eq: conjugating t by some g}), we get $H_0\cap K\subseteq G_0\cap K$. But by definition we have $$G_0\cap K=\left(\pi^{-1}\pi(H)\cap G\right)\cap K\subseteq G\cap K\subseteq H_0,$$ showing that $H_0\cap K=G_0\cap K=G\cap K$.
    
        Similarly, using that $\pi(vh)=\pi(hv)=\pi(h)$ for any $h\in \F^X\rtimes_\rho P$ and $v\in K$, we have $$\pi(H_0)=\pi(H)=\pi(\pi^{-1}\pi(H)\cap G)=\pi(G_0),$$ where at the second equality we used that $\pi(H)\subseteq \pi(G)$, which follows from the assumption that $H$ is elementwise $K$-conjugate into $G$. By the first isomorphism theorem: $$|H_0|=|H_0\cap K|\cdot |\pi(H_0)|=|G\cap K|\cdot |\pi(H_0)|= |G_0\cap K|\cdot |\pi(G_0)|=|G_0|,$$ finishing the proof.
    \end{proof}

    Theorem \ref{thm: reducing the problem to equal size} implies that, working with $\p(H,G)$, we can restrict to groups of equal size. That is, we have the following corollary.
    
    \begin{cor}
        Suppose that $\p(H,G)$ holds for all $H,G\leq \F^X\rtimes_\rho P$ with $|H|=|G|$. Then $\p(H,G)$ holds for all $H,G\leq \F^X\rtimes_\rho P$.
    \end{cor}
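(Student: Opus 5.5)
The corollary is a direct consequence of Theorem \ref{thm: reducing the problem to equal size}. Fix arbitrary $H,G\leq \F^X\rtimes_\rho P$; we must show that elementwise $K$-conjugacy of $H$ into $G$ implies global $K$-conjugacy. The converse direction is trivial for any pair: if $fHf^{-1}\subseteq G$, then the single element $f$ works for every $h\in H$. Hence only the forward implication needs the hypothesis. There I will pass from $(H,G)$ to the pair $(H_0,G_0)$ of the theorem, whose members have equal order, and apply the assumption to that pair.

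Concretely, suppose $H$ is elementwise $K$-conjugate into $G$, and set $G_0:=\pi^{-1}(\pi(H))\cap G$ and $H_0:=H\cdot(G\cap K)$. The group $G\cap K$ is normal in $G$, and the proof of (iv)$\Rightarrow$(i) shows it is normalised by $H$, so $H_0$ is a subgroup. Both $H_0$ and $G_0$ are subgroups of $\F^X\rtimes_\rho P$. The final clause of Theorem \ref{thm: reducing the problem to equal size} gives $|H_0|=|G_0|$, since $H$ is elementwise $K$-conjugate into $G$. By hypothesis, $\p(H_0,G_0)$ therefore holds. Invoking the equivalence (iv)$\Rightarrow$(i) of the theorem, $\p(H,G)$ holds. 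Because we are in the case where $H$ is elementwise $K$-conjugate into $G$, this yields that $H$ is globally $K$-conjugate into $G$.

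The only subtlety is logical. The equal-order conclusion $|H_0|=|G_0|$ is only available under the assumption that $H$ is elementwise conjugate into $G$. This is why the argument splits on whether that assumption holds, rather than applying the hypothesis to $(H_0,G_0)$ unconditionally.

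If $H$ is not elementwise $K$-conjugate into $G$, then $H$ is not globally $K$-conjugate into $G$ either, by the trivial direction above. Both sides of the biconditional are then false, so $\p(H,G)$ holds vacuously. No real obstacle is expected beyond this case distinction.
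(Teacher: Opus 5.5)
Your proof is correct and is exactly the argument the paper leaves implicit (the corollary is stated with an immediate \qed after Theorem~\ref{thm: reducing the problem to equal size}). When $H$ is elementwise $K$-conjugate into $G$, you use the final clause $|H_0|=|G_0|$ together with (iv)$\Rightarrow$(i); otherwise $\p(H,G)$ holds trivially, and your explicit case split and your remark that $H_0$ is a subgroup in the relevant case supply precisely the details the paper omits.
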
 \vspace{-.6cm} \qed
    
    Next, when we have $|H|=|G|$, we find that the property $\p(H,G)$ is symmetric.
    
    \begin{prop}
        Let $H,G\leq \F^X\rtimes_\rho P$ be subgroups such that $|H|=|G|$. Then $\p(H,G)$ holds if and only if $\p(G,H)$ holds.
    \end{prop}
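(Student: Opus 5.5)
By the symmetry of the roles of $H$ and $G$, it suffices to prove that $\p(H,G)$ implies $\p(G,H)$. So assume $|H|=|G|$ and that $\p(H,G)$ holds, and suppose $G$ is elementwise $K$-conjugate into $H$. The goal is to find $v\in K$ with $vGv^{-1}\subseteq H$. The plan is to first show that $H$ is elementwise $K$-conjugate into $G$, then use $\p(H,G)$ to obtain a global conjugator, and finally invert it.

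The first step is a counting argument in the spirit of the proof of Lemma \ref{Lem: Elementwise implies |H| <= |G|}. Since $G$ is elementwise $K$-conjugate into $H$, and $K$ fixes $G\cap K$ pointwise by (\ref{Eq: conjugating t by some g}), we get $G\cap K\subseteq H\cap K$. Since conjugation by $K$ preserves $\pi$ by (\ref{Eq: conjugating g by some t}), we also get $\pi(G)\subseteq\pi(H)$. Now
$$|G|=|G\cap K|\cdot|\pi(G)|\leq |H\cap K|\cdot|\pi(H)|=|H|=|G|,$$
so both inclusions are equalities: $G\cap K=H\cap K$ and $\pi(G)=\pi(H)$.

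Next I need, for each $h\in H$, some $u\in K$ with $uhu^{-1}\in G$. Given $h\in H$, pick $g\in G$ with $\pi(g)=\pi(h)$, which is possible since $\pi(G)=\pi(H)$. By hypothesis there is $w\in K$ with $wgw^{-1}\in H$. Then $wgw^{-1}$ and $h$ are elements of $H$ with the same image under $\pi$, so $h=(t,\one)\,wgw^{-1}$ for some $(t,\one)\in H\cap K=G\cap K$. Conjugating by $w^{-1}$, which fixes $(t,\one)$ since $K$ is abelian, gives
$$w^{-1}hw=(t,\one)\,g\in G.$$
Thus $H$ is elementwise $K$-conjugate into $G$, and $\p(H,G)$ yields $v\in K$ with $vHv^{-1}\subseteq G$. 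Since $|vHv^{-1}|=|H|=|G|$, this inclusion is an equality, so $v^{-1}Gv=H$ with $v^{-1}\in K$. Hence $G$ is globally $K$-conjugate into $H$, which proves $\p(G,H)$.

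The main obstacle I expect is the upgrade from ``elementwise conjugate in one direction'' to ``elementwise conjugate in the other direction''. The key point there is that the fibre of $\pi$ over $\pi(h)$ in $H$ is a single coset of $H\cap K$, and that $H\cap K=G\cap K$ is central with respect to conjugation by $K$. Once this is in place, the remaining steps follow directly from the equality of orders.
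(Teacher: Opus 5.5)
Your proof is correct and follows essentially the same route as the paper. The counting argument forces $\pi(G)=\pi(H)$ and $G\cap K=H\cap K$, the comparison within a fibre of $\pi$ transfers elementwise $K$-conjugacy to the opposite direction, and equality of orders turns $vHv^{-1}\subseteq G$ into $v^{-1}Gv=H$. The only difference is organisational: the paper separately proves that elementwise (resp.\ global) $K$-conjugacy of $H$ into $G$ implies the same for $G$ into $H$, whereas you fold these into the single nontrivial implication of $\p(G,H)$, since global $\Rightarrow$ elementwise is automatic.
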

    
    \begin{proof}
        We show that if $H$ is elementwise (resp. globally) $K$-conjugate into $G$, then conversely $G$ is elementwise (resp. globally) $K$-conjugate into $H$.
    
        First suppose $H$ is elementwise $K$-conjugate into $G$. Since $\pi(vhv^{-1})=\pi(h)$ for any $h\in \F^X\rtimes_\rho P$ and $v\in K$, we get $\pi(H)\subseteq \pi(G)$. Moreover, if $N_H:=H\cap K$ and $N_G:=G\cap K$, then $N_H$ is elementwise conjugate into $N_G$, which shows $N_H\subseteq N_G$ since $K$ is Abelian. The first isomorphism theorem tells us that $$|H|=|N_H|\cdot |\pi(H)|\leq |N_G|\cdot |\pi(G)|=|G|.$$ Since $|G|=|H|$ (and $N_H\subseteq N_G$ and $\pi(H)\subseteq \pi(G)$), we get that $\pi(H)=\pi(G)$ and $N_H=N_G$. 
        
        Let $g\in G$ and choose any $h\in H$ with $\pi(h)=\pi(g)$, which we can do since $\pi(H)=\pi(G)$. Then let $v\in K$ such that $vhv^{-1}\in G$. Now, $\pi(g)=\pi(vhv^{-1})$, so $g^{-1}vhv^{-1}\in N_G$, say that it equals $n\in N_G=N_H$. It follows that $vhv^{-1}=gn$, which we rewrite to $$v^{-1}gv=hn^{-1}\in HN_G=H,$$ making use of $vnv^{-1}=n$ by (\ref{Eq: conjugating g by some t}). Since $g\in G$ was arbitrary, we get that $G$ is elementwise $K$-conjugate into $H$.
    
        If instead we know that $H$ is globally $K$-conjugate into $G$, say $vHv^{-1}\subseteq G$, then $vHv^{-1}=G$ follows by a counting argument. As a result, $G^{-v}=H$, so that $G$ is globally $K$-conjugate into $H$.
    \end{proof}

    In Theorem \ref{thm: reducing the problem to equal size}, we showed that $\p(H,G)$ is equivalent to $\p(H_0,G)$, where $H_0=H\cdot (G\cap K)$. Instead of taking $G\cap K$ here, we can also try to take other subgroups $A\leq K$. And indeed, the following lemma holds.
    
    \begin{lem} \label{Lem: p(H,G) implies p(AH,G)}
        Suppose $\p(H,G)$ holds. Then, for any subgroup $A\leq K$, $\p(AH,G)$ holds. 
    \end{lem}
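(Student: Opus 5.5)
The direction ``globally $\Rightarrow$ elementwise'' is immediate: if $v(AH)v^{-1}\subseteq G$ for a single $v\in K$, then that same $v$ works for every element of $AH$. So the whole content is the forward direction. I read $AH$ as the subgroup of $\F^X\rtimes_\rho P$ generated by $A$ and $H$; if $A$ happens to be normalised by $H$, this is just the product set. The argument below works verbatim in either reading.

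Assume $AH$ is elementwise $K$-conjugate into $G$. There are two steps.

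First, I transfer the hypothesis to $H$. Since $H\subseteq AH$, the group $H$ is elementwise $K$-conjugate into $G$. By the assumed property $\p(H,G)$, there is some $v\in K$ with $vHv^{-1}\subseteq G$.

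Second, I show that $A\subseteq G$. Take $a\in A\subseteq AH$. By hypothesis there is $f\in K$ with $faf^{-1}\in G$. Since $a=(t,\one)\in K$ and $K=\F^X\times\{\one\}$ is abelian (equivalently, by (\ref{Eq: conjugating t by some g}) with $g=f$ and $A_f=I$), we have $faf^{-1}=a$. Hence $a\in G$, and so $A\subseteq G\cap K$. This mirrors the step $H\cap K\subseteq G\cap K$ in the proof of Lemma \ref{Lem: Elementwise implies |H| <= |G|}.

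Combining the two steps, conjugation by $v$ fixes $A$ pointwise, again because $K$ is abelian. Therefore
$$v\langle A,H\rangle v^{-1}=\langle vAv^{-1},\,vHv^{-1}\rangle=\langle A,\,vHv^{-1}\rangle\subseteq G.$$
In the product-set reading this is simply $v(ah)v^{-1}=a\cdot vhv^{-1}\in G\cdot G=G$. So $AH$ is globally $K$-conjugate into $G$, which proves $\p(AH,G)$.

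I do not expect a genuine obstacle here. The only point needing care is that elementwise conjugacy of the elements of $A$ forces $A\subseteq G$ outright. This is exactly what allows the single conjugator $v$, which comes from $\p(H,G)$ and was chosen only with $H$ in mind, to also handle the $A$-part.
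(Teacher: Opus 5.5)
Your proposal is correct and follows essentially the same argument as the paper. Both restrict the hypothesis to $H$, invoke $\p(H,G)$ to get a single $v\in K$, observe that $A\subseteq G$ because $K$ is abelian (so elementwise conjugacy of $A$ into $G$ forces containment), and conclude $v(AH)v^{-1}\subseteq G$. Your reading of $AH$ as the generated subgroup $\langle A,H\rangle$ is a small gain in care over the paper, which treats $AH$ as a product set without checking that it is closed under multiplication; that closure need not hold unless $H$ normalises $A$.
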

    
    \begin{proof}
        If $AH$ is elementwise $K$-conjugate into $G$, then $H$ is elementwise $K$-conjugate into $G$. By $\p(H,G)$ we know that $vHv^{-1}\subseteq G$ for some $v\in K$. As $K$ is Abelian, $vAv^{-1}=A$, and hence $v(AH)v^{-1}=vAv^{-1}vHv^{-1}\subseteq AG$. Moreover, since $A$ is elementwise $K$-conjugate into $G$, we know that $A\subseteq G$. We conclude that $v(AH)v^{-1}\subseteq G$, so that $\p(AH,G)$ holds.
    \end{proof}
    
    Note that $A$ doesn't have to be a subgroup of $G$. A natural follow-up question would be to ask whether it is still possible if $A$ is not a subgroup of $K$. To do so, we consider the following definition.
    
    \begin{defn}
        Let $A\subseteq \F^X\rtimes_\rho P$ be a subgroup. We define: $$D(A):=\{v\in K\mid vAv^{-1}= A\}=K\cap N_{\F^X\rtimes_\rho P}(A),$$ where $N_{\F^X\rtimes_\rho P}(A)$ is the normaliser of $A$ in $\F^X\rtimes_\rho P$.
    \end{defn}

    Then, if $D(A)=K$, and $A$ is a normal subgroup of $G$, we can indeed obtain a result similar to Lemma \ref{Lem: p(H,G) implies p(AH,G)}
    
    \begin{lem} \label{Lem: D(F) = K}
        Consider groups $G,H\leq \F^X\rtimes_\rho P$, and let $A\trianglelefteq G$ be a normal subgroup such that $D(A)=K$. Then $\p(AH,G)$ holds if and only if $\p(H,G)$ holds.
    \end{lem}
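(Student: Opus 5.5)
The idea is to show that, under the hypotheses, $AH$ and $H$ behave identically with respect to both kinds of $K$-conjugacy into $G$. Then the two biconditionals $\p(AH,G)$ and $\p(H,G)$ have equivalent left sides and equivalent right sides, and so are equivalent. Throughout I will use two inputs. First, since $A\trianglelefteq G$, in particular $A\subseteq G$, so $AG=G$. Second, $D(A)=K$ means $vAv^{-1}=A$ for every $v\in K$. I take $AH$ to be the subgroup (or, if necessary, simply the set) of products $ah$ with $a\in A$, $h\in H$; the argument below only uses elements of this form.

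\emph{Step 1 (elementwise).} I claim that $AH$ is elementwise $K$-conjugate into $G$ if and only if $H$ is. One direction is immediate, since $H\subseteq AH$. For the other, let $a\in A$ and $h\in H$, and choose $v\in K$ with $vhv^{-1}\in G$. Then
$$v(ah)v^{-1}=(vav^{-1})(vhv^{-1})\in A\cdot G=G,$$
because $vav^{-1}\in A$ by $D(A)=K$ and $A\subseteq G$.

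\emph{Step 2 (global).} I claim that $AH$ is globally $K$-conjugate into $G$ if and only if $H$ is. Again one direction is trivial: $vHv^{-1}\subseteq v(AH)v^{-1}\subseteq G$. Conversely, if $vHv^{-1}\subseteq G$ for some $v\in K$, then
$$v(AH)v^{-1}=(vAv^{-1})(vHv^{-1})=A\,(vHv^{-1})\subseteq AG=G.$$

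\emph{Conclusion.} Steps 1 and 2 show that the elementwise statements for $AH$ and $H$ are equivalent, and likewise the global ones. Hence $\p(AH,G)\Leftrightarrow\p(H,G)$.

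There is no real obstacle here. The only point needing care is the role of the hypotheses, as in Lemma \ref{Lem: p(H,G) implies p(AH,G)}. Normality of $A$ in $G$ is used only to get $A\subseteq G$, which gives the absorption $AG=G$. The condition $D(A)=K$ is what replaces the commutativity of $K$ used in that lemma: it lets conjugation by an arbitrary $v\in K$ carry $A$ back into itself.
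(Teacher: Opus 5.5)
\textbf{Gap: you never show that $AH$ is a subgroup.} Your two computations are exactly the ones in the paper's proof: $v(ah)v^{-1}=(vav^{-1})(vhv^{-1})\in AG=G$, and $v(AH)v^{-1}=A\,(vHv^{-1})\subseteq G$. Splitting the argument into an elementwise equivalence and a global equivalence is a tidy packaging. What you omit is the step the paper isolates as a separate claim: if $H$ is elementwise $K$-conjugate into $G$, then $H$ normalises $A$. For $h\in H$, pick $v\in K$ with $g:=vhv^{-1}\in G$. Then
$$hAh^{-1}=v^{-1}g\,(vAv^{-1})\,g^{-1}v=v^{-1}(gAg^{-1})v=v^{-1}Av=A,$$
using $D(A)=K$ twice and $A\trianglelefteq G$ once. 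Hence $A\trianglelefteq AH$, the set of products $ah$ is a subgroup, and it equals $\langle A,H\rangle$.

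Without this, the nontrivial half of your Step 1 is incomplete. The property $\p$ is defined for subgroups, so $AH$ must be read as a subgroup. If it means $\langle A,H\rangle$, your computation only handles elements of the form $ah$ and says nothing about products such as $h_1ah_2$. If you fall back on ``simply the set'' of products, you are proving the biconditional for a set you have not shown to be a subgroup, which is outside the definition of $\p$.

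Adding the claim closes the gap. In the nontrivial direction of Step 1 you assume $H$ is elementwise $K$-conjugate into $G$. The claim then gives $AH=\langle A,H\rangle=\{ah : a\in A,\ h\in H\}$, so your computation covers every element. The same hypothesis is available in the other implication of the lemma: there you assume $AH$ is elementwise conjugate into $G$, hence $H$ is too. It follows that your closing remark is not right. Normality of $A$ in $G$ is not used merely to get $A\subseteq G$. Together with $D(A)=K$, it is what guarantees that $H$ normalises $A$, so that $AH$ is a genuine subgroup whose elements are all of the form $ah$.
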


    Before we prove this, we show the following claim.

    \begin{claim} \label{Claim: D(F) = K}
            If $H$ is elementwise $K$-conjugate into $G$, then $A$ is a normal subgroup of $AH$.
        \end{claim}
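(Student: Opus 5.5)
To show that $A$ is normal in $AH$, it suffices to show that $hAh^{-1}=A$ for every $h\in H$. Indeed, $A$ is normalised by $A$ itself, so if it is also normalised by every element of $H$, then it is normalised by every product of elements of $A$ and $H$. Since $H$ is a group, $AH$ is generated by $A\cup H$, so $A\trianglelefteq AH$ follows.

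Fix $h\in H$. By the assumption that $H$ is elementwise $K$-conjugate into $G$, there is some $v\in K$ with $g:=vhv^{-1}\in G$. Then $h=v^{-1}gv$, and therefore
$$hAh^{-1}=v^{-1}g\,(vAv^{-1})\,g^{-1}v.$$
Since $D(A)=K$, every element of $K$ normalises $A$, so $vAv^{-1}=A$. This gives $hAh^{-1}=v^{-1}(gAg^{-1})v$. Because $A\trianglelefteq G$ and $g\in G$, we have $gAg^{-1}=A$. Applying $D(A)=K$ once more, now to $v^{-1}\in K$, gives $v^{-1}Av=A$. Hence $hAh^{-1}=A$.

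The argument is a direct computation, so I expect no real obstacle. The only care needed is that $D(A)$ is defined by the equality $vAv^{-1}=A$, so it is closed under inverses, and using it for both $v$ and $v^{-1}$ is legitimate. Note also that we never need $A\subseteq K$ or $A\subseteq H$: normality in $AH$ only requires that $A$ be normalised by generators of $AH$. Since $A$ is normalised by $H$, the product $AH$ is indeed a subgroup, which is implicit in the statement.
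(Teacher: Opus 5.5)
Your proof is correct and follows the paper's argument exactly: you write $h=v^{-1}gv$ with $g\in G$, use $D(A)=K$ to absorb the conjugations by $v$ and $v^{-1}$, and use $A\trianglelefteq G$ for the conjugation by $g$. Your extra remarks — that $AH$ is a subgroup because $H$ normalises $A$, and that normality in $AH$ follows because $A\cup H$ generates $AH$ — spell out what the paper leaves implicit.
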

    
        \begin{proof}
            Let $h\in H$, and choose $v\in K$ such that $g:=vhv^{-1}\in G$. Since $A$ is a normal subgroup of $G$, we get $$hAh^{-1}=v^{-1}gvAv^{-1}g^{-1}v=v^{-1}gAg^{-1}v=v^{-1}Av=A,$$ where we use that $D(A)=K$. Therefore $A$ is invariant under conjugation by $h\in H$, and it follows that $A$ is a normal subgroup of $AH$. 
        \end{proof}
    
    \begin{proof}[Proof of Lemma \ref{Lem: D(F) = K}]
        Suppose that $\p(AH,G)$ holds, and assume that $H$ is elementwise $K$-conjugate into $G$. By Claim \ref{Claim: D(F) = K} and Lemma \ref{Lem: normal form}, each element of $AH$ is of the form $ah$ for $a\in A$ and $h\in H$. Therefore, if $v\in K$ is such that $vhv^{-1}\in G$, we get $$v(ah)v^{-1}=vav^{-1}\cdot vhv^{-1}\in AG=G.$$ That is, $AH$ is elementwise $K$-conjugate into $G$. By $\p(AH,G)$, $AH$ is globally $K$-conjugate into $G$, so that $H\subseteq AH$ is also globally $K$-conjugate into $G$. We conclude that $\p(H,G)$ holds.
    
        Suppose that $\p(H,G)$ holds, and assume that $AH$ is elementwise $K$-conjugate into $G$. Then $H$ is elementwise $K$-conjugate into $G$, so by $\p(H,G)$ we know that there exists some $v\in K$ such that $vHv^{-1}\subseteq G$. It follows that $$v(AH)v^{-1}=vAv^{-1}\cdot vHv^{-1}\subseteq AG=G.$$
    \end{proof}

\section{Group cohomology} \label{Section: group cohomology}

    In this section, we recall a couple of results and definitions from group cohomology. For more details about group cohomology, we refer to \cite{Serre}. We only use the first cohomology group with coefficients in the Abelian group $\F^X$, as defined below. Contrary to the notation from Section \ref{Section: notation}, we note that the field $\F$ does not need to be finite for the definitions and results in this section to be true, as its finiteness is never used. Let $G\leq \F^X\rtimes_\rho P$ be a subgroup of the affine extension of a permutation representation.
    
    \begin{defn} \label{Def: cocycle}
        A \textbf{1-cocycle} is a function $\gamma:G\to \F^X$ such that for any $g,h\in G$ we have $$\gamma(gh)=\gamma(g)+A_g(\gamma(h)).$$ The set of 1-cocycles forms an $\F$-vector space, which we denote by $Z^1(G,\F^X)$.
    \end{defn}
    
    \begin{defn} \label{Def: coboundary}
        A \textbf{1-coboundary} $\gamma:G\to \F^X$ is a 1-cocycle such that there exists some $t\in \F^X$, such that for all $g\in G$ we have $$\gamma(g)=(I-A_g)(t).$$ The set of 1-coboundaries forms an $\F$-subvector space of $Z^1(G,\F^X)$, and is denoted by $B^1(G,\F^X)$. 
    \end{defn}

    The 1-coboundaries occur as a result of global conjugation, as is shown in Section \ref{Section: conjugacy results}. In a similar fashion, we mimic `local' conjugation via the use of a local 1-coboundary. 
    
    \begin{defn} \label{Def: local coboundary}
        A \textbf{local 1-coboundary} is a 1-cocycle $\gamma:G\to \F^X$ such that for all $g\in G$ there exists some $t_g\in \F^X$ such that $$\gamma(g)=(I-A_g)(t_g).$$
    \end{defn}

    Local 1-coboundaries have several nice properties for permutation representations.

    \begin{lem} \label{Lem: properties of local coboundaries}
        Let $\gamma:G\to \F^X$ be a local 1-coboundary, and let $x\in X$. The following hold:

        \begin{itemize}
            \item[(i)] We have $\gamma(\id)=\zero\in \F^X$.
            \item[(ii)] For any $g\in G$ such that $g(x)=x$, we have $\gamma(g)(x)=0$.
            \item[(iii)] For any $g,h\in G$ such that $g(x)=h(x)$, we have $\gamma(g^{-1})(x)=\gamma(h^{-1})(x)$.
        \end{itemize}
    \end{lem}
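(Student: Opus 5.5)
We prove each of the three items directly from the cocycle identity in Definition \ref{Def: cocycle} and the explicit form of the permutation maps $A_g$. Throughout we use the description $(A_g t)(y) = t(g^{-1}(y))$ from (\ref{The functions A_g}), where the action of $g\in G$ on $X$ is through $\pi(g)$.

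For (i), apply the cocycle identity with $g=h=\id$. This gives $\gamma(\id)=\gamma(\id)+A_{\id}(\gamma(\id))=2\gamma(\id)$, so $\gamma(\id)=\zero$. This part does not need the local coboundary hypothesis. Alternatively, the local coboundary form gives $\gamma(\id)=(I-I)(t_{\id})=\zero$ directly.

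For (ii), the local coboundary hypothesis is what we use. Write $\gamma(g)=(I-A_g)(t_g)$ and evaluate at $x$:
\[
\gamma(g)(x)=t_g(x)-(A_g t_g)(x)=t_g(x)-t_g(g^{-1}(x)).
\]
Since $g(x)=x$, we also have $g^{-1}(x)=x$, so the right-hand side is $0$.

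For (iii), the plan is to reduce to (ii) using the element $k:=h^{-1}g$, which fixes $x$ because $g(x)=h(x)$. The key computation relates $\gamma(g^{-1})$ to $\gamma(h^{-1})$ and $\gamma$ evaluated at an element fixing $x$. Write $g^{-1}=k^{-1}h^{-1}$, where $k^{-1}=g^{-1}h$ also fixes $x$. The cocycle identity then gives
\[
\gamma(g^{-1})=\gamma(k^{-1})+A_{k^{-1}}(\gamma(h^{-1})).
\]
Evaluating at $x$, the first term vanishes by (ii) applied to $k^{-1}$. The second term is
\[
\gamma(h^{-1})(k(x))=\gamma(h^{-1})(x),
\]
since $(A_{k^{-1}}s)(x)=s(k(x))$ and $k(x)=x$. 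Together these give $\gamma(g^{-1})(x)=\gamma(h^{-1})(x)$.

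The only delicate point is bookkeeping. We must choose the factorisation so that the $A$-twist is by an element fixing $x$, and we must track correctly whether $A_{k^{\pm1}}$ evaluates at $k(x)$ or at $k^{-1}(x)$. Both evaluate at $x$ here, because $k$ fixes $x$.
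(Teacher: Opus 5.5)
Your proof is correct. Parts (i) and (ii) match the paper. The paper proves (i) from the local coboundary form $(I-I)(t_{\id})$; your alternative, $\gamma(\id)=2\gamma(\id)$, shows that (i) in fact holds for every 1-cocycle.

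For (iii), you and the paper both reduce to (ii) applied to the element $g^{-1}h$, which fixes $x$, but you factor it differently:
\begin{itemize}
\item The paper expands $\gamma(g^{-1}h)=\gamma(g^{-1})+A_{g^{-1}}(\gamma(h))$. This produces the term $\gamma(h)(h(x))$, which must then be rewritten as $-\gamma(h^{-1})(x)$. That rewriting needs a second use of the cocycle identity on $h^{-1}h=\id$, together with (i).
\item Your factorisation $g^{-1}=(g^{-1}h)h^{-1}$ puts the twist on $A_{g^{-1}h}$, which comes from an element fixing $x$. The evaluation at $x$ is then immediate, one application of the cocycle identity suffices, and (i) is not needed.
\end{itemize}
Both routes are valid; yours is slightly shorter and cleaner.
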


    \begin{proof}
        For this proof, let $t_g\in \F^X$ such that for any $g\in G$ we have $\gamma(g)=(I-A_g)(t_g)$.
    
        \textbf{(i)} We write out $$\gamma(\id)=(I-A_{\one})(t_{\one})=(I-I)(t_{\one})=\zero.$$

        \textbf{(ii)} Suppose $g(x)=x$. Then $$\gamma(g)(x)=(I-A_g)(t_g)(x)=t_g(x)-A_g(t_g)(x)=t_g(x)-t_g(g^{-1}(x))=t_g(x)-t_g(x)=0.$$

        \textbf{(iii)} Suppose $g(x)=h(x)$, so that $g^{-1}h(x)=x$. We get \begin{align}
            0 &=\gamma(g^{-1}h)(x) \label{Eq: properties of local coboundary 1}\\ &=\gamma(g^{-1})(x)+A_{g^{-1}}(\gamma(h))(x)\label{Eq: properties of local coboundary 2}\\ &=\gamma(g^{-1})(x)+\gamma(h)(g(x))\nonumber\\
            &=\gamma(g^{-1})(x)+\gamma(h)(h(x))+\gamma(h^{-1})(x)-\gamma(h^{-1})(x)\nonumber\\
            &=\gamma(g^{-1})(x)+A_{h^{-1}}(\gamma(h))(x)+\gamma(h^{-1})(x)-\gamma(h^{-1})(x)\nonumber\\
            &= \gamma(g^{-1})(x)+\gamma(h^{-1}h)(x)-\gamma(h^{-1})(x)\label{Eq: properties of local coboundary 3}\\
            &=\gamma(g^{-1})(x)-\gamma(h^{-1})(x)\label{Eq: properties of local coboundary 4}
        \end{align}

        where (\ref{Eq: properties of local coboundary 1}) follows from (ii), (\ref{Eq: properties of local coboundary 2}) and (\ref{Eq: properties of local coboundary 3}) follow from Definition \ref{Def: cocycle}, and (\ref{Eq: properties of local coboundary 4}) follows from (i).
    \end{proof}

    In fact, when $G$ is a subgroup of an affine extension of a permutation representation and $\gamma$ is a local 1-coboundary of $G$, it turns out that $\gamma$ is always a (global) 1-coboundary.

    The following result can be found to be true by application of the (Eckmann-)Shapiro lemma for the cohomology group $H^1(G,\F^X):=Z^1(G,\F^X)/B^1(G,\F^X)$, see for example \cite[Chapter 6.3]{Weibel}. However, the result is not immediately clear, and one would need to read through the technical machinery of category theory to understand that the results below are in fact true. Instead, we provide a more direct proof of the fact.

    \begin{thm} \label{Thm: local coboundary is global coboundary}
        Let $G\leq \F^X\rtimes_\rho P$ be a subgroup. Any local 1-coboundary $\gamma:G\to \F^X$ is a 1-coboundary. 
    \end{thm}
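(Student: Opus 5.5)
We will construct an explicit $t\in\F^X$ with $\gamma(g)=(I-A_g)(t)$ for all $g\in G$, working one $G$-orbit of $X$ at a time. Here $G$ acts on $X$ through $\pi$. Write $X=\bigsqcup_i O_i$ as a disjoint union of $G$-orbits and fix a representative $x_i\in O_i$ for each $i$. The condition $\gamma(g)=(I-A_g)(t)$ is a pointwise identity of functions on $X$. Since $A_g(t)(y)=t(g^{-1}(y))$, it reads
$$\gamma(g)(y)=t(y)-t(g^{-1}(y))\qquad\text{for all }y\in X.$$
Both sides only involve values of $t$ on the orbit containing $y$, so it suffices to define $t$ orbit by orbit and verify the identity on each orbit separately.

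On the orbit $O_i$ we set $t(x_i):=0$. For $y\in O_i$, choose any $h\in G$ with $h(x_i)=y$ and define
$$t(y):=-\gamma(h^{-1})(x_i).$$
Lemma \ref{Lem: properties of local coboundaries}(iii) shows this is well-defined: if also $h'(x_i)=y$, then $\gamma(h^{-1})(x_i)=\gamma(h'^{-1})(x_i)$. Taking $h=\one$ gives $t(x_i)=-\gamma(\one)(x_i)=0$ by part (i), so the two definitions agree at $x_i$. It is cleaner to rewrite $t$ in terms of $\gamma(h)$. The cocycle identity gives $0=\gamma(hh^{-1})=\gamma(h)+A_h(\gamma(h^{-1}))$. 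Evaluating at $y=h(x_i)$ yields $\gamma(h)(y)=-\gamma(h^{-1})(x_i)$, so
$$t(h(x_i))=\gamma(h)(h(x_i)).$$

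Now we verify the identity. Let $g\in G$ and $y\in O_i$, write $y=gh(x_i)$, and note that $g^{-1}(y)=h(x_i)$. We compute $\gamma(gh)(y)$ using the cocycle relation:
$$\gamma(gh)(y)=\gamma(g)(y)+A_g(\gamma(h))(y)=\gamma(g)(y)+\gamma(h)(g^{-1}(y))=\gamma(g)(y)+\gamma(h)(h(x_i)).$$
By the formula for $t$, the left-hand side is $t(gh(x_i))=t(y)$, and the last term is $t(h(x_i))=t(g^{-1}(y))$. Rearranging gives $\gamma(g)(y)=t(y)-t(g^{-1}(y))$, which is exactly the required identity. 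As this holds for every $y$ in every orbit, $\gamma(g)=(I-A_g)(t)$ for all $g\in G$, so $\gamma$ is a 1-coboundary.

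The step that needs care is well-definedness of $t$, and that is exactly where locality enters, via Lemma \ref{Lem: properties of local coboundaries}(iii). For a general cocycle, the point-stabiliser of $x_i$ would obstruct the construction, while here part (ii) forces $\gamma$ to vanish at fixed points, which is what makes (iii) work. The remaining work is only bookkeeping of the convention $A_g(\delta_x)=\delta_{g^{-1}(x)}$, that is, evaluating $A_g(s)(y)=s(g^{-1}(y))$ consistently. No finiteness of $\F$, $X$ or $G$ is used, apart from being able to choose orbit representatives.
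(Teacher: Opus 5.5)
Your proof is correct and is essentially the paper's argument: you define $t$ orbit by orbit via $t(h(x_i))=-\gamma(h^{-1})(x_i)$, exactly as the paper does with its chosen elements $g_x$, and you get well-definedness from Lemma \ref{Lem: properties of local coboundaries}(iii). The only difference is cosmetic: you first use the cocycle identity to rewrite $t(h(x_i))=\gamma(h)(h(x_i))$, so the final check is a single cocycle expansion of $\gamma(gh)$, whereas the paper invokes part (iii) a second time to replace $g_{g^{-1}(x)}^{-1}$ by $g_x^{-1}g$.
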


    \begin{proof}
        Let $\gamma:G\to \F^X$ be a local 1-coboundary, and split $X$ into $G$-orbits $X_i$, so that $$\F^X=\bigoplus_{i=1}^r \F^{X_i}.$$ For each $x\in X$, we let $G(x)\subseteq X$ denote its $G$-orbit. For $i=1,...,r$, choose representatives $x_i\in X_i$ of these orbits, and for each $x\in X$ choose $g_x\in G$ such that $$g_x(x_i)=x,$$ where $i$ is chosen so that $G(x_i)=G(x)$. Using (\ref{Eq: A_g acting on delta_x}), we get $$A_{g_x^{-1}}(\delta_{x_i})=\delta_{x}.$$ 
        
        Define $t\in \F^X$ by $$t(x):=-\gamma(g_x^{-1})(x_i),$$ where again $i$ is chosen such that $G(x_i)=G(x)$. We note that this is uniquely defined (well-defined), since for any $g,h\in G$ with $g(x_i)=h(x_i)=x$ we have $\gamma(g^{-1})(x_i)=\gamma(h^{-1})(x_i)$ by Lemma \ref{Lem: properties of local coboundaries}(iii).
        
        Next, let $g\in G$, and let $x,x_i\in X$ so that $x_i$ is the representative of the orbit $G(x)$. If we evaluate $(I-A_g)(t)\in \F^X$ at $x$, we get: \begin{align}
            \nonumber\left((I-A_g)(t)\right)(x)&=(t-A_g(t))(x)=t(x)-t(g^{-1}(x))=-\gamma(g^{-1}_x)(x_i)+\gamma(g^{-1}_{g^{-1}(x)})(x_i)\\ &\label{Eq: local-global solution1}=-\gamma(g^{-1}_x)(x_i)+\gamma(g^{-1}_xg)(x_i)\\ &=\label{Eq: local-global solution2}-\gamma(g^{-1}_x)(x_i)+\gamma(g^{-1}_x)(x_i)+A_{g^{-1}_x}(\gamma(g))(x_i)\\ &= \nonumber\gamma(g)(g_x(x_i))=\gamma(g)(x),
        \end{align} where (\ref{Eq: local-global solution1}) follows from Lemma \ref{Lem: properties of local coboundaries}(iii) since $$g^{-1}g_x(x_i)=g^{-1}(x)=g_{g^{-1}(x)}(x_i),$$ and where (\ref{Eq: local-global solution2}) follows from Definition \ref{Def: cocycle}. It follows that we have $$\gamma(g)=(I-A_g)(t),$$ proving that $\gamma$ is a 1-coboundary.
    \end{proof}

\section{Local-global conjugacy results for affine extensions} \label{Section: conjugacy results}

    In this section, we prove Theorem \ref{Thm: IntroMain1} and similar results as consequences to the result in Section \ref{Section: group cohomology}. Just like the previous section, we note that it is not needed for the field $\F$ to be finite for the results of this section to hold, since this condition is not used throughout the proofs.

    \subsection{Local-global conjugacy for trivial kernels} \label{Subsection: Local-global conjugacy for trivial kernels}

    The following theorem relates the local-global conjugacy problem to the study of group cohomology.

    \begin{thm} \label{Thm: Goal theorem1}
        Let $H,G\leq \F^X\rtimes_\rho P$ be subgroups. Then $H$ is globally $K$-conjugate into $G$ if and only if there exist $v_h\in K$ for $h\in H$ such that the following hold:

        \begin{itemize}
            \item[(i)] For any $h\in H$ we have $v_hhv_h^{-1}\in G$;
            \item[(ii)] For any $g,h\in H$ we have $$(v_ggv_g^{-1})(v_hhv_h^{-1})=v_{gh}ghv_{gh}^{-1}.$$ 
        \end{itemize}
    \end{thm}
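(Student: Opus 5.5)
The forward direction is immediate. If $kHk^{-1}\subseteq G$ for some $k\in K$, set $v_h:=k$ for every $h\in H$. Then (i) is the assumption itself. Condition (ii) reads $(kgk^{-1})(khk^{-1})=k(gh)k^{-1}$, which is trivially true.

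For the converse, the plan is to extract a 1-cocycle from the family $\{v_h\}$, recognise it as a local 1-coboundary, and apply Theorem \ref{Thm: local coboundary is global coboundary}. The complication is that $H$ is not a subgroup of $G$, while that theorem is stated for subgroups of the affine extension acting on $\F^X$ through $A$. So we should work directly with $H$, or with its image under the map $\phi:H\to G$, $h\mapsto v_hhv_h^{-1}$.

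\textbf{Step 1: $\phi$ is a homomorphism.} Condition (ii) says exactly that $\phi$ is a homomorphism. By (\ref{Eq: conjugating g by some t}) we have $\phi(h)=(w_h,\one)h$, where we write $v_h=(u_h,\one)$ and $w_h:=(I-A_h)(u_h)$.

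\textbf{Step 2: define the cocycle.} Set $\gamma:H\to\F^X$ by $\gamma(h):=w_h=(I-A_h)(u_h)$. Expand $\phi(g)\phi(h)=(w_g,\one)g(w_h,\one)h$ using (\ref{Eq: conjugating t by some g}); this gives $(w_g+A_g(w_h),\one)gh$. Comparing with $\phi(gh)=(w_{gh},\one)gh$ yields
$$\gamma(gh)=\gamma(g)+A_g(\gamma(h)).$$
So $\gamma$ is a 1-cocycle on $H$. By construction it is a local 1-coboundary, with $t_h:=u_h$.

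\textbf{Step 3: apply Theorem \ref{Thm: local coboundary is global coboundary}.} Since $H\leq\F^X\rtimes_\rho P$ acts on $X$ via $\pi$, Definitions \ref{Def: cocycle}--\ref{Def: local coboundary} and Theorem \ref{Thm: local coboundary is global coboundary} apply verbatim with $H$ in place of $G$. Hence there is a single $t\in\F^X$ with $\gamma(h)=(I-A_h)(t)$ for all $h\in H$.

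\textbf{Step 4: conclude.} Put $k:=(t,\one)\in K$. By (\ref{Eq: conjugating g by some t}),
$$khk^{-1}=((I-A_h)(t),\one)h=(\gamma(h),\one)h=\phi(h)\in G$$
for every $h\in H$. Hence $kHk^{-1}\subseteq G$.

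The only delicate point is the bookkeeping in Step 2: the order of multiplication in the semidirect product, and the convention $A_g=A_{\pi(g)}$, must be applied correctly so that the derived identity matches Definition \ref{Def: cocycle} exactly. Once that identity is checked, Steps 3 and 4 follow directly.
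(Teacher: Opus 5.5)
Your proposal is correct and follows the paper's proof essentially step for step. The paper also writes $v_h=(t_h,\one)$ and rewrites condition (ii) as the cocycle identity for $h\mapsto (I-A_h)(t_h)$. It then notes that this cocycle is a local 1-coboundary on $H$, upgrades it to a global one via Theorem \ref{Thm: local coboundary is global coboundary}, and conjugates by the resulting single element $(t,\one)$. Your homomorphism $\phi$ is just a repackaging of condition (ii).
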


    \begin{proof}
        If $vHv^{-1}\subseteq G$ for some $v\in K$, then conditions (i),(ii) follow immediately by choosing $v_h=v$ for all $h\in H$. Therefore suppose that conditions (i),(ii) hold for certain elements $v_h\in K$ for $h\in H$. Write $v_h=(t_h,\one)\in \F^X\times \{\one\}$ for $t_h\in \F^X$, and note that, by (\ref{Eq: conjugating g by some t}) and (i), we have $$((I-A_h)(t_h),\one)h\in G$$ for all $h\in H$. With (\ref{Eq: conjugating g by some t}) in mind, (ii) changes to: \begin{align*}
            ((I-A_g)(t_g)+A_g(I-A_h)(t_h),\one)gh&=((I-A_g)(t_g),\one)g((I-A_h)(t_h),\one)h\\&=((I-A_{gh})(t_{gh}),\one)gh
        \end{align*} where the first equality follows from (\ref{Eq: permutation representation}). Hence, we have \begin{align}\label{Eq: cocycle condition in conjugation}
            (I-A_g)(t_g)+A_g(I-A_h)(t_h)=(I-A_{gh})(t_{gh})
        \end{align} Therefore, if we let $\gamma:H\to \F^X$ denote the function given by $h\mapsto (I-A_h)(t_h)$, (\ref{Eq: cocycle condition in conjugation}) changes to $$\gamma(gh)=\gamma(g)+A_g(\gamma(h)).$$ That is, $\gamma$ is a 1-cocycle in the sense of Definition \ref{Def: cocycle}. Clearly, it also satisfies Definition \ref{Def: local coboundary}, and thus by Theorem \ref{Thm: local coboundary is global coboundary}, there exists a single $t\in \F^X$ such that $\gamma(h)=(I-A_h)(t)$. Choose $v:=(t,\one)\in K$. Then, using (\ref{Eq: conjugating g by some t}), for any $h\in H$ we have: \begin{align*}
            vhv^{-1}&=(t,\one)h(t,\one)^{-1}=((I-A_h)(t),\one)h\\&=(\gamma(h),\one)h=((I-A_h)(t_h),\one)h=v_hhv_h^{-1}\in G.
        \end{align*} That is, $vHv^{-1}\subseteq G$. This completes the proof.
    \end{proof}

    Note that condition (i) of Theorem \ref{Thm: Goal theorem1} is given by elementwise conjugation. Using this, we can prove Theorem \ref{Thm: IntroMain1} by forcing condition (ii) of Theorem \ref{Thm: Goal theorem1}. 

    \begin{proof}[Proof of Theorem \ref{Thm: IntroMain1}]
        Suppose $H$ is elementwise $K$-conjugate into $G$. For all $h\in H$, choose $v_h\in K$ such that $v_hhv_h^{-1}\in G$. For any pair $g,h\in H$, note that $$\pi((v_ggv_g^{-1})(v_hhv_h^{-1})(v_{gh}ghv_{gh}^{-1})^{-1})=\pi(gh(gh)^{-1})=\one\in P.$$ Hence since $G\cap K=\{(\zero,\one)\}$, we get $$(v_ggv_g^{-1})(v_hhv_h^{-1})=v_{gh}ghv_{gh}^{-1}.$$ That is, the chosen elements $v_h$ satisfy conditions (i),(ii) of Theorem \ref{Thm: Goal theorem1}. Hence $H$ is globally $K$-conjugate into $G$. 
    \end{proof}

    We highlight some results that follow immediately from Theorem \ref{Thm: IntroMain1}.

    \begin{example}
        Theorem \ref{Thm: Goksel's proven properties}(b) follows from Theorem \ref{Thm: IntroMain1} by choosing $X=V_{n-1}$ and $P=\Tnn$ as in Section \ref{Section: Preliminaries: rooted binary trees explanation}.
    \end{example}    

    \begin{example}
        In addition, Theorem \ref{Thm: IntroMain1} proves the statement for any iterated wreath product $$[C_p]^n:=C_p\wr C_p\wr \cdots \wr C_p$$ of the cyclic groups $C_p\cong \Z/p\Z$ for $p$ prime, since these groups admit the recursive relation $$[C_p]^n\cong \F_p^{V^{(p)}_{n-1}}\rtimes [C_p]^{n-1},$$ where $V_{n-1}^{(p)}$ is the set of words over $\{0,...,p-1\}$ of length exactly $n-1$. 
        
        If $f$ is a unicritical degree $p$ polynomial, we know that the finite-level \emph{geometric} monodromy groups of $f$ embed into $[C_p]^n$ up to global conjugacy, see for example \cite{AdamsHyde}. Hence it makes sense to study these local-global questions for $[C_p]^n$ as well in the case of the arithmetic Galois groups. The following result summarises this.
    \end{example}

    \begin{cor} \label{Cor: Iterated C_p}
        Let $p$ be prime, and for each $n\geq 0$, let $$[C_p]^n:=C_p\wr C_p\wr \cdots \wr C_p$$ be the $n$-fold wreath product of $C_p\cong \Z/p\Z$ with itself. For each $n\geq 1$, let $$\pi_n:[C_p]^n=C_p\wr [C_p]^{n-1}\to [C_p]^{n-1}$$ denote the quotient map, and set $K_n=\ker(\pi_n)$. 
        
        If $H,G\subseteq [C_p]^n$ are subgroups such that $G\cap K_n=\{\one\}$, then $H$ is elementwise $K_n$-conjugate into $G$ if and only if $H$ is globally $K_n$-conjugate into $G$.
    \end{cor}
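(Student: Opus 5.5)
The plan is to reduce Corollary \ref{Cor: Iterated C_p} to Theorem \ref{Thm: IntroMain1}, in the same way the preceding example recovered Theorem \ref{Thm: Goksel's proven properties}(b). The work is to exhibit $[C_p]^n$ as an affine extension of a permutation representation, with $\pi_n$ matching the projection $\pi$ and $K_n$ matching $K=\F_p^{V^{(p)}_{n-1}}\times\{\one\}$.

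\textbf{Step 1: the recursive description.} Realise $[C_p]^n$ as the subgroup of the automorphism group of the first $n$ levels of the $p$-ary rooted tree generated by the cyclic rotations at each vertex. Equivalently, these are the automorphisms all of whose sections at vertices of levels $0,\dots,n-1$ act on the first letter by an element of $\langle(0\ 1\ \cdots\ p-1)\rangle$.

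\textbf{Step 2: splitting off the last level.} Mimic the backward recursion (\ref{Eq: formula acting on words from the back}). For $g\in[C_p]^n$ and a word $wx$ with $w\in V^{(p)}_{n-1}$, write
$$g(wx)=g(w)\,g_{g^{-1}(w)}(x),$$
where each bottom section $g_u$ acts on $\{0,\dots,p-1\}$ as a translation $x\mapsto x+c_u$ with $c_u\in\F_p$. The assignment $g\mapsto\bigl((u\mapsto c_u),\,g|_{T_{n-1}}\bigr)$ then gives an isomorphism
$$[C_p]^n\cong\F_p^{V^{(p)}_{n-1}}\rtimes_\rho[C_p]^{n-1}.$$
Here $\rho(h)=A_h\colon t\mapsto t\circ h^{-1}$, induced by the action of $[C_p]^{n-1}$ on $X=V^{(p)}_{n-1}$. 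The checks needed are routine:
\begin{itemize}
\item the multiplication rule (\ref{Eq: permutation representation}) holds, because composing translations adds the constants $c_u$ after reindexing by $h^{-1}$;
\item $\rho$ is a permutation representation in the sense of Definition \ref{Def: permutation representation};
\item under this identification $\pi_n$ is the restriction map $g\mapsto g|_{T_{n-1}}$, which is exactly $\pi$, so $K_n=\ker\pi_n=K$.
\end{itemize}

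\textbf{Step 3: conclude.} The hypothesis $G\cap K_n=\{\one\}$ becomes $G\cap(\F_p^X\times\{\one\})=\{(\zero,\one)\}$. Theorem \ref{Thm: IntroMain1} then gives $\p(H,G)$ for every $H$, and this is precisely the claimed equivalence between elementwise and global $K_n$-conjugacy.

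The only step needing care is Step 2. The convention for the semidirect product must be fixed consistently, with $A_h$ using $h^{-1}$, so that the cocycle and conjugation formulas (\ref{Eq: conjugating t by some g}) and (\ref{Eq: conjugating g by some t}) apply verbatim. One should also confirm that the translation constants of $[C_p]^n$ are unrestricted, so the kernel really is all of $\F_p^{X}$. Neither point is a genuine obstacle.
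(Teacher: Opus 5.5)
Your proposal follows the paper's argument: the paper also identifies $[C_p]^n$ with the affine extension $\F_p^{V^{(p)}_{n-1}}\rtimes_\rho[C_p]^{n-1}$, so that $\pi_n$ becomes $\pi$ and $K_n$ becomes $K$, and then applies Theorem~\ref{Thm: IntroMain1}. The one thing to repair is an indexing slip in Step~2, copied from (\ref{Eq: formula acting on words from the back}): by the definition of sections $g(wx)=g(w)\,g_w(x)=g(w)(x+c_w)$, and the assignment that is a homomorphism for the rule (\ref{Eq: permutation representation}) is $g\mapsto\bigl(v\mapsto c_{g^{-1}(v)},\,g|_{T_{n-1}}\bigr)$, i.e.\ the translation is read off at the image vertex, $g(wx)=g(w)\bigl(x+t(g(w))\bigr)$.
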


    The groups $C_p$ in Corollary \ref{Cor: Iterated C_p} specifically require $p$ to be prime, since this forces $[C_p]^n$ to be an affine extension. However, the proof relies only on the Abelian group structure. Thus, as noted in Remark \ref{Re: generalisation}, we can replace $C_p\cong \F_p$ by $C_d$ for any positive integer $d$. Moreover, it can readily be seen that the statement holds for iterated wreath products $[\A]^n$ for any Abelian group $\A$.

    \begin{example} \label{Example: demonstration of generalisation of Goksel}
        We demonstrate how Theorem \ref{Thm: IntroMain1} may be used to show $N$-conjugacy instead of $K$-conjugacy for some subgroups $N\subseteq K$. For the purpose of the demonstration, let $S_8$ be the group of bijections on $\{0,1,...,7\}$, and consider the affine extension $$\F^{\{0,...,7\}}\rtimes_\rho S_8,$$ where $\F$ is any field. Let $a=(0\ 5\ 3\ 6)(1\ 4\ 2\ 7)\in S_8, \  b=(0\ 3)(1\ 2)\in S_8$ and $c=(0\ 6)(1\ 7)(2\ 4)(3\ 5)\in S_8$. Let $$H:=\langle (\zero,a),(\zero,b),(\zero,c)\rangle\subseteq \F^{\{0,...,7\}}\rtimes_\rho S_8.$$ Now consider $x:=(\delta_0+\delta_5,(0\ 5)(1\ 4)(2\ 7)(3\ 6))$ and $y:= (\delta_0+\delta_3,b)$, where $\delta_i:X\to \F$ is the indicator function as in Definition \ref{Def: indicator function}, for $i=0,...,7$. Then for $G:=\langle x,y\rangle$, one can check that $H$ is elementwise $N$-conjugate into $G$, where $$N:=\langle (\delta_0,\one),(\delta_3,\one),(\delta_5,\one),(\delta_6,\one)\rangle\subseteq K.$$ Furthermore, notice that $\{0,3,5,6\}$ is invariant under the action of $G$.
        
        Next, we write $$\F^{\{0,...,7\}}\cong\F^{\{0,3,5,6\}}\rtimes\left(\F^{\{1,2,4,7\}}\rtimes \pi(G)\right).$$ Since $N\cap G=\{(\zero,\id)\}$, Theorem \ref{Thm: IntroMain1} shows that $H$ is globally $N$-conjugate into $G$. And as a matter of fact, we have $(\delta_0,\one)H(\delta_0,\one)^{-1}=G$.
    \end{example}

    \subsection{Local-global conjugacy for quotients of permutation representations} Suppose we have a group $G\leq \F^X\rtimes_\rho P$, and let $N:=G\cap K$. If $N$ is trivial, then by Theorem \ref{Thm: IntroMain1}, the properties $\p(H,G)$ hold for all $H\leq \F^X\rtimes_\rho P$. In this section, we assume that $N$ is non-trivial, and hope to apply Theorem \ref{Thm: IntroMain1} to a quotient of the representation of $G$, with the idea in mind that $G/N$ intersects $(\F^X\times \{\one\})/N$ trivially.

    Let $F\subseteq \F^X$ be the subspace defined by $N=F\times \{\one\}$, and note that we have an isomorphism \begin{align} \label{Isom: Phi}
        \Phi:(\F^X\rtimes_\rho\pi(G))/N\isomto \F^X/F\rtimes_{\rho_{G,N}} G/N, \phantom{;;;} (t,\pi(g))N\mapsto (t+F,gN).
    \end{align}

    where $\rho_{G,N}:G/N\to \text{Aut}(\F^X/F)$ is the representation \begin{align} \label{Rep: rho G N}
        \rho_{G,N}(gN):=[t+F\mapsto \rho(g)(t)+F].
    \end{align} Here we make use of the fact that any linear map $A:\F^X\to \F^X$ such that $A(F)=F$, there is an induced linear map $A_*:\F^X/F\to \F^X/F$ given by $A_*(t+F)=A(t)+F$. Let $\pi_{G,N}$ denote the restriction map $$\pi_{G,N}:(\F^X\rtimes_\rho \pi(G))/N\to G/N,\phantom{;;;} (t,\pi(g))N\mapsto gN,$$ and let $K_{G,N}:=\ker(\pi_{G,N})=\left(\F^X\times \{\one\}\right)N$. We define properties $\q(H,H')$ similar to that of Definition \ref{Def: K-conjugacy}, but we use a $\q$ instead of $\p$ to highlight that we are not (necessarily) working with a permutation representation.
    
    \begin{defn}
        Let $H,H'\leq (\F^X\rtimes_{\rho} \pi(G))/N$. We say that $H$ is \textbf{elementwise $K_{G,N}$-conjugate into $H'$} if for every $h\in H$ there exists some $f\in K_{G,N}$ such that $fhf^{-1}\in H'$. We say that $H$ is \textbf{globally $K_{G,N}$-conjugate into $H'$} if there exists some $f\in K_{G,N}$ such that $fHf^{-1}\subseteq H'$. Finally, we say that $\q(H,H')$ \textbf{holds} if:
    
        \begin{center}$H$ is elementwise $K_{G,N}$-conjugate into $H'\iff H$ is globally $K_{G,N}$-conjugate into $H'$.\end{center}
    \end{defn}

    Suppose that $H,G\leq \F^X\rtimes_\rho P$ are subgroups. If $\pi(H)\not\subseteq \pi(G)$, choose $h\in H$ such that $\pi(h)\notin \pi(G)$. Clearly, $\pi(vhv^{-1})=\pi(h)$ for all $v\in K$, so $h$ is not $K$-conjugate into $G$. Hence $\p(H,G)$ always holds; $H$ is neither elementwise nor globally $K$-conjugate into $G$.

    As a result, we only need to consider the case when $\pi(H)\subseteq \pi(G)$. In this case, we can identify $HN/N$ with a subgroup of $(\F^X\rtimes_\rho \pi(G))/N$, giving us the following result.
    
    \begin{prop} \label{Prop: Old main2}
        Let $G\leq \F^X\rtimes_\rho P$, and define $N:=G\cap (\F^X\times\{\one\})$. Let $H\leq \F^X\rtimes_\rho P$ be any subgroup such that $\pi(H)\subseteq \pi(G)$.
        
        Then, $\p(H,G)$ holds if and only if $\q(HN/N,G/N)$ holds.
    \end{prop}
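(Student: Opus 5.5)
The plan is to pass to the quotient by $N$ via the natural homomorphism $q:\F^X\rtimes_\rho \pi(G)\to (\F^X\rtimes_\rho \pi(G))/N$, and to show separately that elementwise and global conjugacy are each preserved in both directions. Since $\pi(H)\subseteq\pi(G)$, both $H$ and $G$ lie in $\F^X\rtimes_\rho\pi(G)$, so $HN/N=q(H)$ and $G/N=q(G)$ make sense there. The key facts are: $N\subseteq K$ is pointwise fixed by conjugation by $K$ (by (\ref{Eq: conjugating t by some g})), $N\subseteq G$, and $q(K)=K_{G,N}$ by definition of $K_{G,N}$.

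First, elementwise conjugacy. If $v\in K$ with $vhv^{-1}\in G$, then $q(v)q(h)q(v)^{-1}\in G/N$ with $q(v)\in K_{G,N}$. Every element of $HN/N$ has the form $q(h)$ with $h\in H$, so $H$ elementwise $K$-conjugate into $G$ implies $HN/N$ elementwise $K_{G,N}$-conjugate into $G/N$. Conversely, suppose $f\in K_{G,N}$ and $fq(h)f^{-1}\in G/N$. Choose a lift $v\in K$ of $f$. Then $vhv^{-1}\in q^{-1}(G/N)=GN=G$, using $N\le G$. Hence the converse implication also holds.

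Second, global conjugacy, which goes the same way. If $vHv^{-1}\subseteq G$, then $q(v)q(H)q(v)^{-1}\subseteq G/N$, and $q(H)=HN/N$. Conversely, if $f q(H) f^{-1}\subseteq G/N$, lift $f$ to $v\in K$. Then $vHv^{-1}\subseteq q^{-1}(G/N)=G$. Since both notions transfer in both directions, the two biconditionals $\p(H,G)$ and $\q(HN/N,G/N)$ are equivalent.

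The only point needing care is the identification of $K_{G,N}$ with $q(K)$, so that every element of $K_{G,N}$ lifts to $K$, together with the equality $q^{-1}(q(G))=GN=G$. Both follow from the definitions, so I expect no serious obstacle. I also do not expect to need that $N$ is central in $K$ here; that property matters only for later uses.
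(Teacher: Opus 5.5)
Your proposal is correct and matches the paper's proof: both identify $K_{G,N}$ with $K/N$, lift a conjugating coset $vN$ to some $v\in K$, and use $N\le G$ (so the preimage of $G/N$ is $GN=G$) to get $vhv^{-1}\in G \iff (vN)(hN)(vN)^{-1}\in G/N$, which transfers both elementwise and global conjugacy. One small correction to your closing remark: you do use, implicitly, that $K$ centralizes $N$, because together with $N\trianglelefteq G$ it is what makes $N$ normal in $\F^X\rtimes_\rho\pi(G)$, so that the quotient you work in exists.
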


    \begin{proof}
        We show that for any $h\in H$ and $v\in K$, we have $vhv^{-1}\in G$ if and only if $(vN)(hN)(vN)^{-1}\in G/N$, where $hN\in HN/N$ is the corresponding coset. 
        
        Indeed, if $vhv^{-1}\in G$, then for any $n\in N$ we have $$(vn)(hN)(vn)^{-1}=v(hN)v^{-1}=vhv^{-1}N\subseteq G,$$ where we use that $N$ is a normal subgroup of $\F^X\rtimes \pi(G)$. Hence $(vN)(hN)(vN)^{-1}\in G/N$. Similarly if $(vN)(hN)(vN)^{-1}\in G/N$, then in particular $vhv^{-1}\in \langle G,N\rangle=G$. Since these claims holds for any $h\in H$, we immediately conclude that $\p(H,G)\iff \q(HN/N,G/N)$, finishing the proof.
    \end{proof}

    Following Proposition \ref{Prop: Old main2}, we can apply Theorem \ref{Thm: IntroMain1}, as long as $\rho_{G,N}:G/N\to \aut(\F^X/F)$ becomes (isomorphic to) a permutation representation.

    \begin{proof}[Proof of Theorem \ref{Thm: IntroMain2}]
        Recall that we have an isomorphism $$\Phi:(\F^X\rtimes_\rho\pi(G))/N\isomto \F^X/F\rtimes_{\rho_{G,N}} G/N$$ given by $\Phi((t,\pi(g))N)=(t+F,gN)$. Under $\Phi$, we have $\Phi(K_{G,N})=\F^X/F\times \{\one\}$. Then, since $\phi$ is an isomorphism, we have an isomorphism $$\Psi:\F^X/F\rtimes_{\rho_{G,N}}G/N\isomto \F^Y\rtimes_\tau G/N$$ given by $(t+F,gN)\mapsto (\phi(t+F),gN)$. Indeed, by (\ref{Eq: isomorphism diagram}), for any $g\in G$ and $s\in \F^X$ we have $$\tau(gN)\circ \phi=\phi_*(\rho_{G,N}(gN))\circ \phi=\left(\phi\circ \rho_{G,N}(gN)\circ \phi^{-1}\right)\circ \phi=\phi\circ \rho_{G,N}(gN),$$ so that \begin{align*}
            \Psi((t+F,gN)(s+F,hN))&=\Psi(t+F+\rho_{G,N}(gN)(s+F),ghN)\\&=(\phi(t+F)+\phi(\rho_{G,N}(gN)(s+F)),ghN) \\
            &=(\phi(t+F)+\tau(gN)(\phi(s+F))),ghN)\\&=(\phi(t+F),gN)(\phi(s+F),hN)\\
            &=\Psi(t+F,gN)\Psi(s+F,hN).
        \end{align*}

        Under the isomorphism, we have $\Psi(\Phi(K_{G,N}))=\F^Y\times\{\one\}$, and thus $\q(HN/N,G/N)$ holds if and only if $\p_\tau(\Psi(\Phi(HN/N)),\Psi(\Phi(G/N)))$ holds, where the property $\p_\tau$ is given by Definition \ref{Def: K-conjugacy} for the permutation representation $\tau:G/N\to \aut(\F^Y)$. Since $$\Psi(\Phi(G/N))\cap \left(\F^Y\times \{\id\}\right)=\{(\zero,\id)\},$$ we know that $\p_\tau(\Psi(\Phi(HN/N)),\Psi(\Phi(G/N)))$ holds by Theorem \ref{Thm: IntroMain1}. Hence, $\p(H,G)$ holds by Proposition \ref{Prop: Old main2}.
    \end{proof}

    \begin{example} \label{Ex: stabiliser}
        As an example, let $x\in X$, and choose some subgroup $G_0\subseteq P$ such that $g(x)=x$ for all $g\in G_0$. Define an extension of $G_0$ by $$G:=\left\langle \left\{(\zero,g)\in \F^X\rtimes_\rho P\mid g\in G_0\right\}\cup\{(\delta_x,\one)\}\right\rangle\subseteq \F^X\rtimes_\rho P.$$ Since for each $g\in G$ we have $A_g(\delta_x)=\delta_x$, we can choose $Y:=X\setminus \{x\}$. For $N=G\cap K=\langle (\delta_x,\one)\rangle$, we have an induced permutation representation $$\rho_{G,N}:G/N\to \aut(\F^Y),$$ given by the action of $G$ on $X\setminus \{x\}$. Hence, for any subgroup $H\leq \F^X\rtimes_\rho P$, the property $\p(H,G)$ holds by Theorem \ref{Thm: IntroMain2}. 
    \end{example}
        
\section{The odometer flag and the counterexample} \label{Section: counterexample}

    In \cite{Goksel}, the author states Conjecture \ref{Conj: Goksel}, which, if it were true, would prove Conjecture \ref{Conj: GokselMarkov}. In particular, it would justify the use of the even Markov model for predicting the factorisations of polynomials. 
    
    Fix some positive integer $n$, and consider the first $n$ levels of the rooted binary tree $T_n$. For this section, recall the notation and definitions from Section \ref{Section: Preliminaries: rooted binary trees explanation}. In particular, we may write $$\Tn=\F_2^{V_{n-1}}\rtimes \Tnn.$$ The following notion is used throughout this section.
    
    \begin{defn} \label{Defn: n-odometer}
        An \textbf{$n$-odometer} is an element $g\in \Tn$ that acts transitively on $V_n$.
    \end{defn}

    Write $\pi_n:\Tn\to \Tnn$ for the restriction homomorphism, and let $$K_n:=\ker(\pi_n)=\F_2^{V_{n-1}}\times \{\id\}\subseteq \F_2^{V_{n-1}}\rtimes \Tnn.$$ Let $\p(H,G)$ denote property $\p_{K_n}(H,G)$ as in Definition \ref{Def: K-conjugacy}, using $P=\Tnn$, $\F=\F_2$, $X=V_{n-1}$ and $K=K_n$.

    This section is used to provide a counterexample to Conjecture \ref{Conj: Goksel}, which is stated in Section \ref{Section: Revision: Counterexample}. We hope that the preparatory results on local-global conjugacy of subgroups in $\Tn$ that contain an $n$-odometer in Sections \ref{Section: Revision: Flag} and \ref{Section: Revision: Application} are useful on their own.   

    In Section \ref{Section: Revision: Flag}, we formulate a \textit{flag} on the vector space $\F_2^{V_{n-1}}$ in the form of subspaces $$\{\zero\}=E_0\subset E_1\subset \cdots \subset E_{2^{n-1}}=\F_2^{V_{n-1}}.$$ After that, we proceed by studying the linear-algebraic properties of this flag. In particular, in Section \ref{Section: Revision: Application} we show that for any subgroup $H\subseteq \Tn$ that contains an $n$-odometer, the subgroup $K_n\cap H$ must be exactly equal to one of the terms $E_i\times \{\one\}\subseteq \F_2^{V_{n-1}}\times \{\one\}$. Using this, we find the exact conditions needed for global $K_n$-conjugacy of such group $H$ into another group $G\leq \Tn$. The counterexample is stated in Section \ref{Section: Revision: Counterexample}, justified by the results found by use of our flag, and is verified using a SageMath program.

\subsection{The odometer flag} \label{Section: Revision: Flag}

    Let $c\in \T$ denote the standard odometer, given by the recursive formula $$c=(c,\one)(0\ 1)\in \T^2\rtimes S_2.$$ Then $c$ acts on each set $V_n$ as a $2^n$-cycle. In particular, the restriction $$c|_{T_n}=(c|_{T_{n-1}},\one)(0\ 1)\in \Tnn^2\rtimes S_2=\Tn$$ is an $n$-odometer in the sense of Definition \ref{Defn: n-odometer}. For each $0\leq m\leq 2^{n-1}$, define 
    
    \begin{center}\fbox{$E_m:=\ker((I+A_{c|_{T_n}})^m)\subseteq \F_2^{V_{n-1}}$}\end{center}
    
    where $I:=A_{\one}\in R(\Tn)$ is the linear map corresponding to the identity element $\one\in \Tn$, and where by convention $(I+A_{c|_{T_n}})^0=I$. Note that addition and subtraction of these linear maps are the same, as we are working over $\F_2$. Clearly, we have inclusions $$\{\zero\}= E_0\subseteq E_1\subseteq \cdots \subseteq E_{2^{n-1}}\subseteq \F_2^{V_{n-1}}$$ of $\F_2$-vector spaces.

    \begin{lem} \label{Lem: Computing E_1,E_{2^{n-1}}}
        The following statements hold.

        \begin{itemize}
            \item $E_{2^{n-1}}=\F_2^{V_{n-1}}$.
            \item Let $\textbf{1}:V_{n-1}\to \F_2$ denote the constant function $v\mapsto 1$. Then $E_1=\langle \textbf{1}\rangle$.
        \end{itemize}
    \end{lem}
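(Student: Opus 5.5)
The plan is to work with the linear map $A:=A_{c|_{T_n}}$ on $\F_2^{V_{n-1}}$ directly. By definition, $A = A_{\pi_n(c|_{T_n})} = A_{c|_{T_{n-1}}}$, and $c|_{T_{n-1}}$ acts on $V_{n-1}$ as a single $2^{n-1}$-cycle. So, with respect to the basis $\{\delta_v\}_{v\in V_{n-1}}$, the matrix of $A$ is the permutation matrix of a $2^{n-1}$-cycle. Indeed, by (\ref{Eq: A_g acting on delta_x}), $A$ sends $\delta_v\mapsto\delta_{c^{-1}(v)}$, which again cycles through all basis vectors.

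For the first statement, I will use that $A$ has order exactly $2^{n-1}$, so $A^{2^{n-1}}=I$. Over $\F_2$ the Frobenius identity gives
$$(I+A)^{2^{n-1}}=I+A^{2^{n-1}}=I+I=0,$$
since $I$ and $A$ commute and binomial coefficients $\binom{2^{n-1}}{k}$ vanish mod $2$ for $0<k<2^{n-1}$. Hence $E_{2^{n-1}}=\ker(0)=\F_2^{V_{n-1}}$.

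For the second statement, observe that $E_1=\ker(I+A)$ is the set of $t$ with $A(t)=t$, that is, $t\circ c^{-1}=t$ on $V_{n-1}$. Such a $t$ is constant on $\langle c\rangle$-orbits of $V_{n-1}$. Since $c$ acts transitively on $V_{n-1}$, $t$ must be constant, so $t\in\{\zero,\textbf{1}\}$. Conversely, $\textbf{1}\circ c^{-1}=\textbf{1}$, so $E_1=\langle\textbf{1}\rangle$.

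There is no real obstacle here. The only points needing care are checking that $A$ really is induced by $c|_{T_{n-1}}$ acting as a full cycle on $V_{n-1}$ (a standard induction from $c=(c,\one)(0\ 1)$, already asserted in the paper), and the edge case $n=1$, where $V_0$ is a single point and both claims are trivial.
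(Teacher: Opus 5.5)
Your proof is correct and is essentially the paper's argument: the characteristic-$2$ identity $(I+A)^{2^{n-1}}=I+A^{2^{n-1}}=0$, using that the odometer acts as a $2^{n-1}$-cycle on $V_{n-1}$, and then transitivity forcing every element of $\ker(I+A)$ to be constant. One harmless slip: from $A_g(t)=t\circ g^{-1}$ one gets $A_g(\delta_v)=\delta_{g(v)}$ rather than $\delta_{g^{-1}(v)}$, but either way $A$ is the permutation matrix of a $2^{n-1}$-cycle, so the argument is unaffected.
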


    \begin{proof}
        Since $R(\Tn)$ is an $\F_2$-algebra, we have the natural trick $$(I+A_{c|_{T_n}})^{2^{n-1}}=I^{2^{n-1}}+A_{c|_{T_n}}^{2^{n-1}}=I+A_{c|_{T_n}^{2^{n-1}}},$$ where the last equality follows from property (\ref{Eq: linear maps commute with group}). Then note that ${c|_{T_n}}$ must act as a $2^{n-1}$-cycle on $V_{n-1}$, so that the induced map $$c|_{T_n}^{2^{n-1}}:V_{n-1}\to V_{n-1}$$ is trivial. Hence, if $t:V_{n-1}\to \F_2$ is a function, then $$A_{c|_{T_n}^{2^{n-1}}}(t)=t\circ (c|_{T_n}^{2^{n-1}})^{-1}=t,$$ so that $A_{c|_{T_n}^{2^{n-1}}}=I$. It follows that $(I+A_{c|_{T_n}})^{2^{n-1}}=0$, so that $E_{2^{n-1}}=\F_2^{V_{n-1}}$.

        Now let $t\in \ker(I+A_{c|_{T_n}})$. By definition this means that $$\zero=(I+A_{c|_{T_n}})(t)=t+t\circ c|_{T_n}^{-1},$$ and hence $t(v)=t(c|_{T_n}^{-1}(v))$ must hold for any word $v\in V_{k-1}$. As a result, we find $$t(v)=t(c|_{T_n}^{-1}(v))=t(c|_{T_n}^{-2}(v))=\cdots =t(c|_{T_n}^{-2^n+1}(v)).$$ Since ${c|_{T_n}}$ acts transitively on $V_{n-1}$, this implies that $t$ must be constant.
    \end{proof}

    Next we show that the inclusions $E_m\subseteq E_{m+1}$ have codimension 1.

    \begin{cor} \label{Cor: flag properties}
        For each $0< m \leq 2^{n-1}$, the following are true.

        \begin{itemize}
            \item[(i)] We have $\dim E_m=m$.
            \item[(ii)] We have $E_{m-1}=(I+A_{c|_{T_n}})E_m$.
            \item[(iii)] For any $t\in E_{2^{n-1}}\setminus E_{2^{n-1}-1}$ and any $0\leq m\leq 2^{n-1}$ we have $$E_m=\left\langle \left(I+A_{c|_{T_n}}\right)^{2^{n-1}-1}(t),\left(I+A_{c|_{T_n}}\right)^{2^{n-1}-2}(t),...,\left(I+A_{c|_{T_n}}\right)^{2^{n-1}-m}(t)\right\rangle.$$
        \end{itemize}
    \end{cor}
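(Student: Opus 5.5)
Throughout we write $N:=I+A_{c|_{T_n}}$, an $\F_2$-linear endomorphism of $V:=\F_2^{V_{n-1}}$, so that $E_m=\ker N^m$. By Lemma \ref{Lem: Computing E_1,E_{2^{n-1}}}, $N$ is nilpotent with $N^{2^{n-1}}=0$, and $\dim\ker N=1$. The plan is to deduce everything from standard facts about nilpotent operators whose kernel is one-dimensional.

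We first show that the map $N:E_m\to E_{m-1}$ is well defined and has kernel $E_1$. If $N^m t=0$, then $N^{m-1}(Nt)=0$, so $N(E_m)\subseteq E_{m-1}$. Moreover, the kernel of $N|_{E_m}$ is $\ker N\cap E_m=E_1$, since $E_1\subseteq E_m$ for $m\geq1$. Rank–nullity then gives $\dim E_m\leq \dim E_{m-1}+1$, and by induction $\dim E_m\leq m$.

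Next we obtain equality. We have $\dim E_{2^{n-1}}=\dim V=2^{n-1}$, since $|V_{n-1}|=2^{n-1}$. The chain $E_0\subseteq\cdots\subseteq E_{2^{n-1}}$ has $2^{n-1}$ steps, each increasing the dimension by at most one, yet the total increase is $2^{n-1}$. Hence every step increases the dimension by exactly one, which proves (i). Then (ii) follows: $N(E_m)\subseteq E_{m-1}$, and
\[
\dim N(E_m)=\dim E_m-\dim E_1=m-1=\dim E_{m-1},
\]
so the inclusion is an equality.

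For (iii), fix $t\in E_{2^{n-1}}\setminus E_{2^{n-1}-1}$ and set $M=2^{n-1}$. By (ii) applied repeatedly, together with the fact that $N^j t\notin E_{M-j-1}$ (otherwise $N^{M-1}t=0$), we get $N^{j}t\in E_{M-j}\setminus E_{M-j-1}$. Consequently the vectors $N^{M-1}t,\dots,N^{M-m}t$ lie in $E_m$, and each one lies outside the span of the preceding ones, because that span is contained in a strictly smaller $E_k$. So these $m$ vectors are linearly independent in $E_m$, and by (i) they form a basis. The one point that needs care is the dimension count $\dim V=2^{n-1}$ and the fact $\dim E_1=1$; both are already available, so no real obstacle is expected.
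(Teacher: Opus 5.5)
Your proposal is correct and follows essentially the same route as the paper. You apply rank--nullity to $I+A_{c|_{T_n}}\colon E_m\to E_{m-1}$, whose kernel is the one-dimensional $E_1$, and compare dimensions against $\dim E_{2^{n-1}}=2^{n-1}$ to get (i) and (ii); for (iii) you show that $(I+A_{c|_{T_n}})^{2^{n-1}-k}(t)\in E_k\setminus E_{k-1}$. The only difference is cosmetic: you finish (iii) with linear independence plus the dimension count, whereas the paper builds $E_k=\langle E_{k-1},t_k\rangle$ inductively.
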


    \begin{proof}
        \textbf{(i).} Note that, for each $k$, $I+A_{c|_{T_n}}$ induces a linear map $$I+A_{c|_{T_n}}:E_k=\ker((I+A_{c|_{T_n}})^k)\to \ker((I+A_{c|_{T_n}})^{k-1})=E_{k-1}.$$ By the rank-nullity theorem, we get \begin{align}\label{Eq:rank-nullity}\dim E_k\leq  \dim E_{k-1}+\dim(\ker(I+A_{c|_{T_n}})\cap E_k)\leq \dim E_{k-1}+\dim (\ker(I+A_{c|_{T_n}})).\end{align}
    
        By Lemma \ref{Lem: Computing E_1,E_{2^{n-1}}}, we know that $\dim E_1=1$, so that by (\ref{Eq:rank-nullity}) we get \begin{align} \label{Eq: dimension of flag increases by 1}
            \dim E_k\leq 1+\dim E_{k-1}
        \end{align}
        for all $0<k\leq 2^{n-1}$. Since by Lemma \ref{Lem: Computing E_1,E_{2^{n-1}}} we have $\dim E_{2^{n-1}}=2^{n-1}$, the formula of (\ref{Eq: dimension of flag increases by 1}) necessarily implies that $\dim E_m=m$ for each $0\leq m\leq 2^{n-1}$.
        
        \textbf{(ii).} Since $$\ker (I+A_{c|_{T_n}})=E_1\subseteq E_m$$ has dimension 1, the maps $I+A_{c|_{T_n}}:E_m\to E_{m-1}$ must be surjective, and thus $$E_{m-1}=(I+A_{c|_{T_n}})E_m.$$

        \textbf{(iii).} Next, let $t\in E_{2^{n-1}}\setminus E_{2^{n-1}-1}$, and define $t_m$ for $0\leq m\leq 2^{n-1}$ by $t_{2^{n-1}}:=t$, and iteratively $$t_{m-1}:=(I+A_{c|_{T_n}})(t_m).$$ If we assume that $t_m\in E_m\setminus E_{m-1}$, which is true by assumption for $m=2^{n-1}$, then $$(I+A_{c|_{T_n}})^{m-2}t_{m-1}=(I+A_{c|_{T_n}})^{m-2}(I+A_{c|_{T_n}})t_m=(I+A_{c|_{T_n}})^{m-1}t_m\neq 0,$$ shows that $t_{m-1}\in E_{m-1}\setminus E_{m-2}$. Therefore by (i) and (ii) we get $E_m=\langle E_{m-1},t_m\rangle$, proving that $$E_m=\langle t_1,...,t_m\rangle.$$
    \end{proof}

    \begin{defn} \label{Def: Flag}
        We call the sequence $$\{\zero\}=E_0\subset E_1\subset \cdots \subset E_{2^{n-1}}=\F_2^{V_{n-1}}$$ the \textbf{(odometer) flag of $\F_2^{V_{n-1}}$}.
    \end{defn}

    \begin{defn} \label{Def: the functions e_m}
        Let $\delta_{0^{n-1}}:V_{n-1}\to \F_2$ denote the indicator function of the word $0^{n-1}$ as in Definition \ref{Def: indicator function}. For each $0\leq k\leq 2^{n-1}$, we define 

        \begin{center}
            \fbox{$e_{2^{n-1}-k}:=(I+A_{c|_{T_n}})^k(\delta_{0^{n-1}}).$}
        \end{center}
    \end{defn}

    \begin{lem} \label{Lem: e_2 in E_2}
        We have $e_{2^{n-1}}\in E_{2^{n-1}}\setminus E_{2^{n-1}-1}$. As a consequence, for every $0< m\leq 2^{n-1}$, we have $$E_m=\langle e_1,...,e_m\rangle=\langle E_{m-1},e_m\rangle.$$
    \end{lem}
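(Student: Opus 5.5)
We need to show that $e_{2^{n-1}}=\delta_{0^{n-1}}$ does not lie in $E_{2^{n-1}-1}$, that is, $(I+A_{c|_{T_n}})^{2^{n-1}-1}(\delta_{0^{n-1}})\neq \zero$. The rest then follows from Corollary \ref{Cor: flag properties}(iii) applied to $t=e_{2^{n-1}}$, since by Definition \ref{Def: the functions e_m} the vectors $(I+A_{c|_{T_n}})^{2^{n-1}-j}(t)$ are exactly $e_j$. This gives $E_m=\langle e_1,\dots,e_m\rangle$, and hence $E_m=\langle E_{m-1},e_m\rangle$.

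For the key non-vanishing, write $N:=2^{n-1}$ and $A:=A_{c|_{T_n}}$. Over $\F_2$ we have the polynomial identity
$$(1+x)^{N-1}=\frac{(1+x)^N}{1+x}=\frac{1+x^N}{1+x}=1+x+\cdots+x^{N-1},$$
because $(1+x)^N=1+x^N$ in characteristic $2$ when $N$ is a power of $2$. Substituting $A$ (the algebra $R(\Tn)$ is commutative on powers of $A$) gives
$$(I+A)^{N-1}=\sum_{j=0}^{N-1}A^j=\sum_{j=0}^{N-1}A_{c|_{T_n}^j},$$
using (\ref{Eq: linear maps commute with group}).

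Applying this to $\delta_{0^{n-1}}$ and using $A_g(\delta_x)=\delta_{g^{-1}(x)}$ from (\ref{Eq: A_g acting on delta_x}), we get
$$\sum_{j=0}^{N-1}\delta_{c|_{T_n}^{-j}(0^{n-1})}.$$
The action of $c|_{T_n}$ on $V_{n-1}$ is a single $N$-cycle (as noted in the proof of Lemma \ref{Lem: Computing E_1,E_{2^{n-1}}}). So the points $c|_{T_n}^{-j}(0^{n-1})$ for $0\le j\le N-1$ are distinct and exhaust $V_{n-1}$. The sum is therefore the constant function $\textbf{1}\neq\zero$, which shows $e_{2^{n-1}}\notin E_{2^{n-1}-1}$. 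This is consistent with $e_1=\textbf{1}$ spanning $E_1$ by Lemma \ref{Lem: Computing E_1,E_{2^{n-1}}}.

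The only mildly delicate step is the polynomial identity in characteristic $2$. It can be checked directly by multiplying both sides by $1+x$ and using $(1+x)^N=1+x^N$, then cancelling $1+x$ in the integral domain $\F_2[x]$.
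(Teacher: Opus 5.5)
Your proof is correct, but it gets the key fact $e_{2^{n-1}}\notin E_{2^{n-1}-1}$ by a different route from the paper.

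The paper argues structurally. Its Claim shows that each $E_m$ is $A_{c|_{T_n}}$-invariant. So if $\delta_{0^{n-1}}\in E_m$, then all translates $A_{c|_{T_n}}^k(\delta_{0^{n-1}})=\delta_{c|_{T_n}^k(0^{n-1})}$ lie in $E_m$. By transitivity these are all the standard basis vectors, so $E_m=\F_2^{V_{n-1}}$, and the dimension count of Corollary~\ref{Cor: flag properties}(i) forces $m=2^{n-1}$.

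You instead evaluate $(I+A_{c|_{T_n}})^{2^{n-1}-1}(\delta_{0^{n-1}})$ directly, using the characteristic-$2$ identity $(1+x)^{2^{n-1}-1}=\sum_{j<2^{n-1}}x^j$. This needs neither the invariance claim nor the dimension formula for the first assertion. It also gives more: the same computation shows $(I+A_{c|_{T_n}})^{2^{n-1}-1}(t)=\bigl(\sum_{x\in V_{n-1}}t(x)\bigr)\mathbf{1}$ for every $t$. Hence $E_{2^{n-1}-1}$ is exactly the hyperplane of functions with coordinate sum $0$, and $e_1=\mathbf{1}$ explicitly.

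The paper's route uses only that the $A_{c|_{T_n}}$-orbit of $\delta_{0^{n-1}}$ spans the space. That fits the invariant-subspace viewpoint it uses later in Proposition~\ref{Prop: smallest A_g-invariant space is E_m}.

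One small point: the formula $A_g(\delta_x)=\delta_{g^{-1}(x)}$ that you quote is off by an inverse. The computation in (\ref{Eq: A_g acting on delta_x}) actually gives $\delta_{g(x)}$, which is what the paper itself uses in this proof. Since you sum over a full cycle, your conclusion is unaffected.
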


    \begin{proof}
        We use the following claim.

        \begin{claim} \label{Claim: E_m is invariant under A_g}
            For any $m$, we have $A_{c|_{T_n}}(E_m)\subseteq E_m$.
        \end{claim}

        \begin{proof}
            Let $t\in E_m=\ker((I+A_{c|_{T_n}})^m)$. Then $$(I+A_{c|_{T_n}})^m(A_{c|_{T_n}}(t))=A_{c|_{T_n}}(I+A_{c|_{T_n}})^m(t)=A_{c|_{T_n}}(\zero)=\zero,$$ where we use that $A_{c|_{T_n}}$ commutes with $I+A_{c|_{T_n}}$. Hence $A_{c|_{T_n}}(t)\in E_m$.
        \end{proof}
    
        Now suppose that $e_{2^{n-1}}\in E_m$ for some integer $m$. For any integer $k$, we have $$A_{c|_{T_n}}^k(e_{2^{n-1}})(c|_{T_n}^k(0^{n-1}))=e_{2^{n-1}}(c|_{T_n}^{-k}c|_{T_n}^k(0^{n-1}))=1,$$ and for $v\in V_{n-1}\setminus \{0^{n-1}\}$ we have $$A_{c|_{T_n}}^k(e_{2^{n-1}})(c|_{T_n}^{k}(v))=e_{2^{n-1}}(v)=0,$$ so we know that $$A_{c|_{T_n}}^k(e_{2^{n-1}})=\delta_{c|_{T_n}^k(0^{n-1})}.$$ Since ${c|_{T_n}}$ acts transitively on $V_{n-1}$, this means that we obtain all $\delta_u$ for $u\in V_{n-1}$ by varying $k$, so we obtain the standard basis elements of $\F_2^{V_{n-1}}$. By Claim \ref{Claim: E_m is invariant under A_g}, we know that $A_{c|_{T_n}}^k(e_{2^{n-1}})\in E_m$ for each $k$, and therefore $E_m=\F_2^{V_{n-1}}=E_{2^{n-1}}$ must hold.

        That $E_m=\langle e_1,...,e_m\rangle =\langle E_{m-1},e_m\rangle$ holds follows immediately from Corollary \ref{Cor: flag properties}(iii).
    \end{proof}

    Recall the following notion from linear algebra.

    \begin{defn}
        Let $F$ be a vector space, and $g\in \text{End}(F)$. A subspace $W\subseteq F$ is called \textbf{$g$-invariant}, if for any $w\in W$ we have $g(w)\in W$.
    \end{defn}

    \begin{prop} \label{Prop: smallest A_g-invariant space is E_m}
        Let $1\leq m\leq 2^{n-1}$. For any $e\in E_m\setminus E_{m-1}$, the smallest $A_{c|_{T_n}}$-invariant subspace of $E_m$ that contains $e$ equals $E_m$.
    \end{prop}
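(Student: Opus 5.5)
Write $B:=I+A_{c|_{T_n}}$. Over $\F_2$ we have $A_{c|_{T_n}}=I+B$, so a subspace is $A_{c|_{T_n}}$-invariant if and only if it is $B$-invariant. Let $W\subseteq E_m$ be the smallest $A_{c|_{T_n}}$-invariant subspace containing $e$. This is well defined: $E_m$ itself is invariant by Claim \ref{Claim: E_m is invariant under A_g}, and intersections of invariant subspaces are invariant. The plan is to show that $W$ contains the $m$ vectors $e, B(e), B^2(e),\dots,B^{m-1}(e)$ and that these are linearly independent. Since $\dim E_m=m$ by Corollary \ref{Cor: flag properties}(i), this forces $W=E_m$.

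\textbf{Containment.} Since $W$ is $B$-invariant and contains $e$, it contains $B^k(e)$ for every $k\geq 0$.

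\textbf{Location in the flag.} I claim that $B^k(e)\in E_{m-k}\setminus E_{m-k-1}$ for $0\leq k\leq m-1$. This is the same induction used in the proof of Corollary \ref{Cor: flag properties}(iii). Since $e\in E_m$, we have $B^{m-k}(B^k e)=B^m e=0$, so $B^k e\in E_{m-k}$. Since $e\notin E_{m-1}$, we have $B^{m-1}e\neq 0$, i.e.
\[
B^{m-k-1}(B^k e)\neq 0,
\]
so $B^k e\notin E_{m-k-1}$.

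\textbf{Linear independence.} Suppose $\sum_{k=0}^{m-1}a_kB^k(e)=0$ with some $a_k\neq 0$, and let $j$ be the smallest index with $a_j\neq 0$. Apply $B^{m-1-j}$. Every term with $k>j$ has $k+m-1-j\geq m$, so it is killed because $B^m e=0$. What remains is $a_jB^{m-1}(e)=0$, which contradicts $B^{m-1}e\neq 0$. Hence the $m$ vectors are independent and span an $m$-dimensional subspace of $W\subseteq E_m$, so $W=E_m$.

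I do not expect a real obstacle here. The only point needing care is the translation between $A_{c|_{T_n}}$-invariance and $B$-invariance, which is immediate because $A_{c|_{T_n}}=I+B$ over $\F_2$. Everything else reduces to the nilpotency data $B^m e=0\neq B^{m-1}e$ together with Corollary \ref{Cor: flag properties}(i).
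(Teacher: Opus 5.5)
Your proof is correct and is essentially the paper's argument: both show that the iterates $(I+A_{c|_{T_n}})^k(e)$ for $0\leq k\leq m-1$ already span $E_m$. The only difference is bookkeeping. The paper writes these iterates unitriangularly in the explicit basis $e_1,\dots,e_m$ of Lemma \ref{Lem: e_2 in E_2} and solves back for each $e_k$, whereas you prove their linear independence directly from $B^m e=0\neq B^{m-1}e$ and finish with $\dim E_m=m$ from Corollary \ref{Cor: flag properties}(i), so you never need the specific vectors $e_k$.
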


    \begin{proof}
        Using Lemma \ref{Lem: e_2 in E_2}, write $e=e_m+a_{m-1}e_{m-1}+\cdots +a_1e_1$ for $a_i\in \F_2$, and let $E\subseteq E_m$ be the smallest $A_{c|_{T_n}}$-invariant subspace that contains $e$. In particular, the map $I+A_{c|_{T_n}}$ must restrict to a linear map on $E$ as well. For each $0\leq k<m$, let \begin{align} \label{Eq: e_2 in E_2 equation1}
            t_k:=(I+A_{c|_{T_n}})^{m-k}(e)=e_{k}+a_{m-1}e_{k-1}+\cdots + a_{m-k+1}e_1\in E.
        \end{align} It follows that $e_1=t_1\in E$. If we assume that $\langle e_1,...,e_{k-1}\rangle\subseteq E$ as an induction hypothesis, then by (\ref{Eq: e_2 in E_2 equation1}) we also have $$t_k-(a_{m-1}e_{k-1}+\cdots +a_{m-k+1}e_1)=e_k\in E,$$ proving the induction. Hence $E=\langle e_1,...,e_m\rangle$, finishing the proof. 
    \end{proof}

    \begin{prop} \label{Prop: for all m,h, A_h(E_m) = E_m}
        Let $f\in \Tn$. Then, for any $0\leq m\leq 2^{n-1}$, the linear map $A_f:\F_2^{V_{n-1}}\to \F_2^{V_{n-1}}$ restricts to an isomorphism $$A_f:E_m\isomto E_m.$$
    \end{prop}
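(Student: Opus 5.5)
I would not try to compare $A_f$ with $A_{c|_{T_n}}$ directly. Instead I would find a recursive description of the flag that is visibly stable under the generators of $\Tn$, and argue by induction on $n$. Since $A_f$ is invertible and $E_m$ is finite-dimensional, it suffices to prove $A_f(E_m)\subseteq E_m$. Also, $A_fA_{f'}=A_{ff'}$ by (\ref{Eq: linear maps commute with group}), and $\Tn$ is generated by the elements $(0\ 1)$ and $(g_0,g_1)$ of Examples \ref{Ex: the element (01)} and \ref{Ex: the element (g_0,g_1)}. So it is enough to check invariance for these two kinds of generators. The base case $n=1$ is trivial, since $V_0$ is a point and $\F_2^{V_0}$ is one-dimensional.

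For the recursion, write $E'_k\subseteq \F_2^{V_{n-2}}$ for the odometer flag at level $n-1$, built from $c'=c|_{T_{n-1}}$. Work in the splitting $\Phi$ of Proposition \ref{Prop: blueprint for action}. Since $c|_{T_n}=(c',\one)(0\ 1)$, that proposition gives $A_{c|_{T_n}}(t_0,t_1)=(A_{c'}t_1,t_0)$, and hence $A_{c|_{T_n}}^2=\operatorname{diag}(A_{c'},A_{c'})$. Over $\F_2$ we have $(I+A_{c|_{T_n}})^{2m}=(I+A_{c|_{T_n}}^2)^m$. Together these give the even case
$$E_{2m}=E'_m\oplus E'_m .$$
For the odd case, $t\in E_{2m+1}$ if and only if $t\in E_{2m+2}$ and $(I+A_{c|_{T_n}})t\in E_{2m}$. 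For $t=(t_0,t_1)\in E'_{m+1}\oplus E'_{m+1}$ we have $(I+A_{c|_{T_n}})t=(t_0+A_{c'}t_1,\ t_0+t_1)$. The first coordinate equals $(t_0+t_1)+(I+A_{c'})t_1$, and $(I+A_{c'})t_1\in E'_m$ always. Therefore
$$E_{2m+1}=\{(t_0,t_1)\in E'_{m+1}\oplus E'_{m+1} : t_0+t_1\in E'_m\}.$$

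Invariance under the swap $A_{(0\ 1)}(t_0,t_1)=(t_1,t_0)$ is immediate from both descriptions. For $(g_0,g_1)$, Proposition \ref{Prop: blueprint for action} gives $A_{(g_0,g_1)}=\operatorname{diag}(A_{g_0},A_{g_1})$, and the induction hypothesis shows that this preserves $E'_k\oplus E'_k$. This settles the even case.

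The odd case is the main obstacle. We need $A_{g_0}t_0+A_{g_1}t_1\in E'_m$ whenever $t_0+t_1\in E'_m$ and $t_0,t_1\in E'_{m+1}$. I would write
$$A_{g_0}t_0+A_{g_1}t_1=A_{g_0}(t_0+t_1)+(A_{g_0}+A_{g_1})t_1 .$$
The first term lies in $E'_m$ by induction, so it remains to show $(A_{g_0}+A_{g_1})E'_{m+1}\subseteq E'_m$. The key observation is the following. By Corollary \ref{Cor: flag properties}(i), the quotient $E'_{m+1}/E'_m$ is one-dimensional over $\F_2$. Each $A_g$ preserves $E'_{m+1}$ and $E'_m$ by induction, so it induces an automorphism of this quotient, and the only automorphism of $\F_2$ is the identity. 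Hence $A_{g_0}t_1\equiv t_1\equiv A_{g_1}t_1 \pmod{E'_m}$, and the difference lies in $E'_m$. This completes the induction.
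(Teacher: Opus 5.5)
Your proof is correct and follows essentially the same route as the paper. Both argue by induction on $n$ through the splitting $\Phi$, check the swap and the diagonal generators, and rest on the same key input: an automorphism preserving the level-$(n-1)$ flag acts trivially on each one-dimensional quotient, which is the paper's $A_h(d_m)=d_m+a$ with $a\in D_{m-1}$. The only difference is packaging. The paper tracks explicit basis vectors $\Phi(e_{2m})=(d_m,\zero)$ and $\Phi(e_{2m-1})=(d_m,d_m)$, whereas you describe $E_{2m}=E'_m\oplus E'_m$ and $E_{2m+1}$ without coordinates via $(I+A_{c|_{T_n}})^{2m}=(I+A_{c|_{T_n}}^2)^m$; this gives the same subspaces and is a little cleaner.
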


    \begin{proof}
        We show this by induction. To separate the levels $n$ and $n-1$, we let $$\{\zero\}= D_0\subset D_1\subset \cdots \subset D_{2^{n-2}}=\F_2^{V_{n-2}}$$ denote the odometer flag as in Definition \ref{Def: Flag} for $\F_2^{V_{n-2}}$. If $n=1$, then $\F^{V_{n-1}}\cong \F$, so that $E_m=\{\zero\}$ or $E_m=\F$, and in either case the linear maps $A_f:\F\to \F$ must restrict to an isomorphism on $E_m$. Therefore, assume by induction that each vector space $D_k$ is $A_h$-invariant for all $h\in \Tnn$.
        
        Define $d_{2^{n-2}}:V_{n-2}\to \F_2$ by $$\Phi(e_{2^{n-1}})=(d_{2^{n-2}},\zero),$$ where $\Phi:\F_2^{V_{n-1}}\to \F_2^{V_{n-2}}\oplus \F_2^{V_{n-2}}$ is the isomorphism from Proposition \ref{Prop: blueprint for action}, and where $e_{2^{n-1}}$ is the function from Definition \ref{Def: the functions e_m}. Note that $d_{2^{n-2}}=\delta_{0^{n-2}}$ as in Definition \ref{Def: indicator function}. Define $d_m:=(I+A_{c|_{T_n}})(d_{m+1})$ likewise, so that by Lemma \ref{Lem: e_2 in E_2} we have $D_k=\langle d_1,...,d_k\rangle$ for any $0\leq k\leq 2^{n-2}$. Finally, recall that for $g\in \Tn$ and $t,s\in \F^{V_{n-2}}$, we defined $$A_g(t,s):=\Phi(A_g(\Phi^{-1}(t,s))).$$
        
        Writing ${c|_{T_n}}=({c|_{T_{n-1}}},\one)(0\ 1)$, Proposition \ref{Prop: blueprint for action} shows that for any $t:V_{n-2}\to \F_2$ we have \begin{align}\label{Eq: Prop: E_m invariant 1}
            A_{c|_{T_n}}(t,\zero)=A_{({c|_{T_{n-1}}},\one)}A_{(0\ 1)}(t,\zero)=A_{({c|_{T_{n-1}}},\one)}(\zero,t)=(A_{c|_{T_{n-1}}}(\zero),t)=(\zero,t),
        \end{align} and \begin{align} \label{Eq: Prop: E_m invariant 2}
            A_{c|_{T_n}}(t,t)=A_{({c|_{T_{n-1}}},\one)}A_{(0\ 1)}(t,t)=A_{({c|_{T_{n-1}}},\one)}(t,t)=(A_{c|_{T_{n-1}}}(t),t).
        \end{align} Combining (\ref{Eq: Prop: E_m invariant 1}) and (\ref{Eq: Prop: E_m invariant 2}) shows $$(I+A_{c|_{T_n}})(t,\zero)=(t,\zero)+(\zero,t)=(t,t),$$ and $$(I+A_{c|_{T_n}})(t,t)=(t,t)+(A_{c|_{T_{n-1}}}(t),t)=((I+A_{c|_{T_{n-1}}})(t),\zero).$$ Therefore, if we assume $\Phi(e_{2m})=(d_m,0)$ for some $m$, which we know is true for $m=2^{n-2}$, then $$\Phi(e_{2m-1})=\Phi((I+A_{c|_{T_n}})e_{2m})=(I+A_{c|_{T_n}})(d_m,\zero)=(d_m,d_m),$$ and $$\Phi(e_{2m-2})=(I+A_{c|_{T_n}})(d_m,d_m)=((I+A_{c|_{T_{n-1}}})(d_m),\zero)=(d_{m-1},\zero).$$ We conclude that $\Phi(e_{2m})=(d_m,\zero)$ and $\Phi(e_{2m-1})=(d_m,d_m)$ for any $0<m\leq 2^{n-2}$.

        Next, we prove the induction hypothesis for the generators of $\Tn$. First consider $f=(0\ 1)=(\one,\one)\cdot (0\ 1)\in \Tnn^2\rtimes S_2$. Then from Proposition \ref{Prop: blueprint for action}, we get: $$A_f(e_{2m})=\Phi^{-1}(A_f(d_m,0))=\Phi^{-1}(0,d_m)=\Phi^{-1}((d_m,0)+(d_m,d_m))=e_{2m}+e_{2m-1}.$$ Similarly, $$A_f(e_{2m-1})=\Phi^{-1}A_f(d_m,d_m)=\Phi^{-1}(d_m,d_m)=e_{2m-1}.$$ Hence, using $E_m=\langle e_1,...,e_m\rangle$ from Lemma \ref{Lem: e_2 in E_2}, we get $$A_f(E_m)=A_f(\langle e_1,...,e_m\rangle)\subseteq \langle e_1,...,e_m\rangle =E_m.$$

        Now consider any $$f=(\one,h):=(\one,h)\cdot \one\in\Tnn^2\rtimes S_2\cong \Tn$$ for some $h\in \Tnn$. Note that by induction $A_h:D_m\to D_m$ is an isomorphism for each $m$, and therefore since $D_m=\langle d_1,...,d_m\rangle=\langle D_{m-1},d_m\rangle$, we must have $$A_h(d_m)=d_m+a$$ for some $a\in D_{m-1}$. Using Proposition \ref{Prop: blueprint for action} we compute: $$A_f(e_{2m})=\Phi^{-1}(A_f(d_m,\zero))=\Phi^{-1}(d_m,A_h(\zero))=\Phi^{-1}(d_m,\zero)=e_{2m},$$ and \begin{align*}
            A_f(e_{2m-1})&=\Phi^{-1}(A_f(d_m,d_m))=\Phi^{-1}(d_m,A_h(d_m))=\Phi^{-1}(d_m,d_m+a)\\
            &=e_{2m-1}+\Phi^{-1}(\zero,a)
        \end{align*}

        It follows that $A_f(e_{2m})\in E_{2m}$, and to show $A_f(e_{2m-1})\in E_{2m-1}$ it suffices to show that $\Phi^{-1}(\zero,a)\in E_{2m-2}$. But $a\in D_{m-1}=\langle d_1,...,d_{m-1}\rangle$, and $$e_{2m-2k}+e_{2m-2k-1}=\Phi^{-1}(\zero,d_{m-k}),$$ so $$\Phi^{-1}(\zero,a)\in \langle e_{2m-2k}+e_{2m-2k-1}: k\geq 1\rangle\subseteq E_{2m-2}.$$ It follows that $A_{(\one,h)}(E_m)=E_m$ for all $m$. Finally, note that $\Tn=\Tnn^2\rtimes S_2$ is generated by $\{(0\ 1)\}\cup \{(\one,h)\mid h\in \Tnn\}$, and hence by the rule $A_fA_h=A_{fh}$, we know that each space $E_m$ is invariant under the action of $A_h$ for all elements $f\in \Tn$, finishing the induction.
                
    \end{proof}

    \begin{lem} \label{Lem: A_h always reduces a level}
        For every $h\in \Tn$ and any $t\in E_m$, we have $(I+A_h)(t)\in E_{m-1}$. 
    \end{lem}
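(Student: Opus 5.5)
We need to show that for every $h\in\Tn$ and $t\in E_m$ we have $(I+A_h)(t)\in E_{m-1}$. By Proposition \ref{Prop: for all m,h, A_h(E_m) = E_m}, each $E_m$ is $A_h$-invariant, so $A_h$ induces a linear map on the quotient $E_m/E_{m-1}$. By Corollary \ref{Cor: flag properties}(i) this quotient is one-dimensional over $\F_2$. The induced map is an automorphism, since $A_h$ restricts to an isomorphism on both $E_m$ and $E_{m-1}$. The only automorphism of a one-dimensional $\F_2$-space is the identity. Hence $A_h(t)\equiv t \pmod{E_{m-1}}$, that is, $(I+A_h)(t)=t+A_h(t)\in E_{m-1}$, recalling that addition and subtraction coincide over $\F_2$.

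Concretely, I would write $E_m=\langle E_{m-1},e_m\rangle$ using Lemma \ref{Lem: e_2 in E_2}. Any $t\in E_m$ is then $t=a e_m+s$ with $a\in\F_2$ and $s\in E_{m-1}$. By invariance, $A_h(e_m)=b e_m+s'$ with $s'\in E_{m-1}$. The coefficient $b$ must equal $1$: if $b=0$, then $A_h(E_m)\subseteq E_{m-1}$, which contradicts the fact that $A_h$ is injective on $E_m$ and $\dim E_m=m>m-1$. It follows that $(I+A_h)(t)=a(e_m+A_h(e_m))+(I+A_h)(s)=a s'+(I+A_h)(s)$. Both summands lie in $E_{m-1}$, again by the invariance of $E_{m-1}$. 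For $m=0$ the statement is vacuous, since $t=\zero$; to avoid the meaningless $E_{-1}$ I would interpret the claim for $m\geq 1$, or note that $(I+A_h)(\zero)=\zero$.

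There is no real obstacle here. All of the substantive work, the invariance of the flag under every $A_h$, is already done in Proposition \ref{Prop: for all m,h, A_h(E_m) = E_m}. The only point requiring care is the dimension-counting step showing that $b=1$ rather than $0$.
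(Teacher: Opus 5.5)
Your proof is correct and follows essentially the same route as the paper. Both arguments use the invariance of each $E_k$ under every $A_h$, write $t$ as a multiple of $e_m$ plus an element of $E_{m-1}$, and use the bijectivity of $A_h$ on $E_m$ to force $A_h(e_m)\in e_m+E_{m-1}$; you argue via injectivity and a dimension count, while the paper argues via surjectivity.
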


    \begin{proof}
        By Proposition \ref{Prop: for all m,h, A_h(E_m) = E_m}, we know that $A_h:E_k\isomto E_k$ is an isomorphism for each $k$. In particular, if $t\in E_{m-1}$, then $$(I+A_h)(t)=t+A_h(t)\in E_{m-1}.$$ Therefore, assume that $t\in E_m\setminus E_{m-1}$, so that $t= e_m+a$ for some $a\in E_{m-1}$. We must have $A_h(e_m)=e_m+b$ for some $b\in E_{m-1}$, since else the map $A_h:E_m\to E_m$ is not surjective. Therefore: $$(I+A_h)(t)=e_m+a+e_m+b+A_h(a)=a+b+A_h(a)\in E_{m-1},$$ finishing the proof.
    \end{proof}

\subsection{Application to local-global conjugacies} \label{Section: Revision: Application}

In this section, we return to the notation $$\Tn=\F_2^{V_{n-1}}\rtimes \Tnn$$ to phrase the $K_n$-conjugacy questions, and are interested in groups that contain an $n$-odometer. We use the following result.

\begin{prop}[\cite{Pink2013}, Proposition 1.6.2] \label{Prop: any pair of odometers are conjugate}
    Any two $n$-odometers are $\Tn$-conjugate.
\end{prop}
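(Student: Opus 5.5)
The plan is an induction on $n$ using the wreath recursion $\Tn\cong\Tnn^2\rtimes S_2$. The cases $n=0,1$ are trivial: $\aut(T_0)$ is trivial, and $\aut(T_1)=S_2$ has a unique $1$-odometer. For $n\geq 2$, let $g\in\Tn$ be an $n$-odometer and write $g=(g_0,g_1)\sigma$ as in (\ref{Eq: original wreath recursion}).

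The first step is to record that $\sigma=(0\ 1)$. Otherwise $g$ preserves the subtrees $0T$ and $1T$, so it cannot act transitively on $V_n$.

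Next, I compute $g^2=(g_0g_1,g_1g_0)$ up to the ordering convention of (\ref{Eq: original wreath recursion}). The orbit of $0w$ under $g$ consists of the words $0\cdot (g_0g_1)^k(w)$ together with their images under $g$, which lie in $1T$. Hence $g$ is transitive on $V_n$ if and only if $g_0g_1$ is transitive on $V_{n-1}$. So $g_0g_1$ (in the appropriate order) is an $(n-1)$-odometer.

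Then I conjugate $g$ to the standard form $(g_0g_1,\one)(0\ 1)$. The element $k=(\one,g_1)$, or $(\one,g_1^{-1})$ depending on convention, gives
\[
kgk^{-1}=(\one,g_1)(g_0,g_1)(0\ 1)(\one,g_1^{-1})=(g_0g_1,\one)(0\ 1)
\]
after a routine check of how $(0\ 1)$ permutes the coordinates. This is the usual computation showing that $(a,b)(0\ 1)$ is conjugate to $(ab,\one)(0\ 1)$.

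By the induction hypothesis there is $u\in\Tnn$ with $u(g_0g_1)u^{-1}=c|_{T_{n-1}}$. Conjugating by the diagonal element $(u,u)$, which commutes with $(0\ 1)$, gives
\[
(u,u)(g_0g_1,\one)(0\ 1)(u,u)^{-1}=(c|_{T_{n-1}},\one)(0\ 1)=c|_{T_n}.
\]
Thus every $n$-odometer is conjugate to the standard odometer $c|_{T_n}$, and therefore any two $n$-odometers are $\Tn$-conjugate.

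The main obstacle is keeping the ordering conventions of (\ref{Eq: original wreath recursion}) straight. These conventions determine whether the relevant product is $g_0g_1$ or $g_1g_0$, and which coordinate carries it. Any consistent choice works, because $g_0g_1$ and $g_1g_0$ are conjugate, so transitivity of one is equivalent to transitivity of the other. The transitivity equivalence in the second step should also be stated carefully: transitivity of $g$ on $V_n$ is equivalent to transitivity of $g^2$ on $0V_{n-1}$, since $g$ swaps the two halves.
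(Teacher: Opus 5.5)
The paper gives no argument for this statement; it imports it from Pink (Proposition 1.6.2). Your proposal therefore supplies a self-contained proof where the paper has only a citation, and the proof is correct. The steps all hold:
\begin{enumerate}
\item An $n$-odometer must have root permutation $(0\ 1)$.
\item Since $g^2=(ab,ba)$ and $g$ swaps the two halves, $g=(a,b)(0\ 1)$ is transitive on $V_n$ exactly when $ab$ is transitive on $V_{n-1}$.
\item The element $(a,b)(0\ 1)$ is conjugate to $(ab,\one)(0\ 1)$.
\item The diagonal element $(u,u)$ commutes with $(0\ 1)$ and carries $(ab,\one)(0\ 1)$ to $c|_{T_n}=(c|_{T_{n-1}},\one)(0\ 1)$.
\end{enumerate}
Your route uses only the wreath recursion already set up in Section \ref{Section: Preliminaries: rooted binary trees explanation}. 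It also proves exactly the form the paper actually uses, namely conjugacy of an arbitrary $n$-odometer to the standard $c|_{T_n}$, as in the proofs of Theorem \ref{IntroThm: counter} and Proposition \ref{prop: final main0}. The citation buys only brevity.

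One correction: your displayed conjugation is wrong, and the error is not a matter of convention. Since $(0\ 1)(x,y)=(y,x)(0\ 1)$, taking $k=(\one,g_1)$ gives $(\one,g_1)(g_0,g_1)(0\ 1)(\one,g_1^{-1})=(g_0g_1^{-1},g_1^2)(0\ 1)$. The correct conjugator is $k=(\one,g_1^{-1})$, for which $(\one,g_1^{-1})(g_0,g_1)(0\ 1)(\one,g_1)=(g_0,\one)(g_1,\one)(0\ 1)=(g_0g_1,\one)(0\ 1)$. With that fix the argument is complete.
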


Using the theory of Section \ref{Section: Revision: Flag}, we are able to prove Theorem \ref{IntroThm: counter}.

\begin{proof}[Proof of Theorem \ref{IntroThm: counter}]
    If $H\subseteq G$, then $H$ is globally $K_n$-conjugate into $G$, so conversely suppose that $H$ is globally $K_n$-conjugate into $G$. By Proposition \ref{Prop: any pair of odometers are conjugate}, we can choose $w\in \Tn$ such that $c|_{T_n}\in wHw^{-1}\cap wGw^{-1}$, where $c$ denotes the standard odometer from Section \ref{Section: Revision: Flag}. Note that, if $$(t,\one)\in \F_2^{V_{n-1}}\rtimes \Tnn=\Tn,$$ then by (\ref{Eq: conjugating t by some g}) we have $$w(t,\one)H(t,\one)^{-1}w^{-1}=w(t,\one)w^{-1}wHw^{-1}w(t,\one)^{-1}w^{-1}=(A_w(t),\one)wHw^{-1}(A_w(t),\one)^{-1},$$ so that $H$ is globally $K_n$-conjugate into $G$ if and only if $wHw^{-1}$ is globally $K_n$-conjugate into $wGw^{-1}$. Therefore, we may assume that $w=\id$, and thus $c|_{T_n}\in H\cap G$.

    Choose $v\in K_n$ such that $vHv^{-1}\subseteq G$, and write $v=(t,\id)\in \F_2^{V_{n-1}}\rtimes \Tnn$ for some $t:V_{n-1}\to \F_2$. Let $m$ denote the integer such that $t\in E_{m+1}\setminus E_m$, so that $$(I+A_{c|_{T_n}})(t)\in E_m\setminus E_{m-1}.$$ Since by (\ref{Eq: conjugating t by some g}) we have $$((I+A_{c|_{T_n}})(t),\id)=(t,\id)c|_{T_n}(t,\id)^{-1}c|_{T_n}^{-1}\in G,$$ we know by Proposition \ref{Prop: smallest A_g-invariant space is E_m} that $E_m\times \{\one\}\subseteq G$. Let $h\in H$. Then by (\ref{Eq: conjugating t by some g}) we get \begin{align} \label{Eq: main2, number 1}
        vhv^{-1}=(t,\id)h(t,\id)^{-1}=(t+A_h(t),\id)\cdot h=((I+A_h)(t),\id)h\in G.
    \end{align} Since $(I+A_h)(t)\in E_m$ by Lemma \ref{Lem: A_h always reduces a level}, we get $((I+A_h)(t),\id)\in G$, so that we must have $h\in G$ as well by view of (\ref{Eq: main2, number 1}).
\end{proof}

On top of Theorem \ref{IntroThm: counter}, the following result describes that the structure of $H\cap K_n$ whenever $H$ contains an $n$-odometer.

\begin{prop} \label{prop: final main0}
    Let $H\leq \Tn$ be a subgroup that contains an $n$-odometer $g$. Then $$H\cap K_n=E_m\times \{\one\},$$ where $m$ is the largest integer such that $(H\cap K_n)\cap (E_m\setminus E_{m-1}\times \{\one\})\neq\emptyset$.
\end{prop}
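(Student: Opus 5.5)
Since $H$ contains an $n$-odometer $g$, the plan is to first reduce to the case $g=c|_{T_n}$. By Proposition \ref{Prop: any pair of odometers are conjugate} there is $w\in\Tn$ with $wgw^{-1}=c|_{T_n}$. Conjugating by $w$ sends $H\cap K_n$ to $wHw^{-1}\cap K_n$, and by (\ref{Eq: conjugating t by some g}) it acts on $K_n$ via $(t,\one)\mapsto (A_w(t),\one)$. By Proposition \ref{Prop: for all m,h, A_h(E_m) = E_m}, $A_w$ restricts to an isomorphism $E_m\isomto E_m$ for every $m$. Hence $A_w$ preserves each $E_m$ and each difference $E_m\setminus E_{m-1}$, so both the statement and the integer $m$ are invariant under this conjugation. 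We may therefore assume $c|_{T_n}\in H$.

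Write $H\cap K_n=F\times\{\one\}$ for a subspace $F\subseteq\F_2^{V_{n-1}}$. Since $H\cap K_n$ is normal in $H$, (\ref{Eq: conjugating t by some g}) shows that $F$ is $A_h$-invariant for every $h\in H$, and in particular $A_{c|_{T_n}}$-invariant. Let $m$ be the largest integer with $F\cap(E_m\setminus E_{m-1})\neq\emptyset$. This is well defined, since the $E_k$ exhaust $\F_2^{V_{n-1}}$ by Lemma \ref{Lem: Computing E_1,E_{2^{n-1}}}; if $F=\{\zero\}$ take $m=0$ and the claim is trivial. By maximality, every element of $F$ lies in $E_m$, so $F\subseteq E_m$.

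For the reverse inclusion, pick $e\in F\cap(E_m\setminus E_{m-1})$. The subspace $F\cap E_m=F$ is an $A_{c|_{T_n}}$-invariant subspace of $E_m$ containing $e$, so by Proposition \ref{Prop: smallest A_g-invariant space is E_m} it contains $E_m$. Thus $F=E_m$, as required.

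The only delicate point is the reduction step: we must check that conjugating by an arbitrary $w\in\Tn$, not merely by elements of $K_n$, preserves the flag levels. This is exactly what Proposition \ref{Prop: for all m,h, A_h(E_m) = E_m} provides. Everything else is a direct application of Proposition \ref{Prop: smallest A_g-invariant space is E_m}.
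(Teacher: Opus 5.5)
Your proof is correct and follows the paper's argument. You conjugate so that $c|_{T_n}\in H$, use the $A_{c|_{T_n}}$-invariance of $F$ together with the smallest-invariant-subspace proposition to get $F=E_m$, and use $A_w(E_m)=E_m$ for all $w\in\Tn$ to pass between $H$ and $wHw^{-1}$. The only difference is bookkeeping: you check up front that conjugation preserves each $E_m\setminus E_{m-1}$ (so $m$ is unchanged) and spell out $F\subseteq E_m$ via maximality, whereas the paper conjugates back at the end and leaves both points implicit.
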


\begin{proof}
    By Proposition \ref{Prop: any pair of odometers are conjugate}, we can choose some $w\in \Tn$ such that $wHw^{-1}$ contains $c|_{T_n}$, where $c|_{T_n}$ is the restriction of the standard odometer $c$ as in Section \ref{Section: Revision: Flag}.
    
    Let $E\subseteq \F_2^{V_{n-1}}$ denote the subspace such that $$wHw^{-1}\cap K_n=E\times \{\one\}\subseteq \F_2^{V_{n-1}}\rtimes \Tnn,$$ and let $m$ denote the largest integer such that $E\cap (E_m\setminus E_{m-1})\neq \emptyset$. Choose $t\in E$. In particular, by (\ref{Eq: conjugating t by some g}) we have $$c|_{T_n}(t,\one)c|_{T_n}^{-1}=(A_{c|_{T_n}}(t),\one)\in wHw^{-1},$$ so that $A_{c|_{T_n}}(t)\in E$. In particular, $E$ must be closed under the action of $A_{c|_{T_n}}$. By Proposition \ref{Prop: smallest A_g-invariant space is E_m}, we get $E=E_m$. Finally, since by (\ref{Eq: conjugating t by some g}) we also get $wK_nw^{-1}=K_n$, we see that $$H\cap K_n=w^{-1}(wHw^{-1}\cap K_n)w=w^{-1}(E_m\times \{\one\})w=A_{w^{-1}}(E_m)\times \{\one\}=E_m\times \{\one\}$$ by Proposition \ref{Prop: for all m,h, A_h(E_m) = E_m}.
\end{proof}

If $H,G\leq \Tn$ are subgroups, and $h\in H$ is elementwise $K_n$-conjugate into $G$, then by (\ref{Eq: conjugating t by some g}) we know that there exists some $t:V_{n-1}\to \F_2$ such that $$(t,\one)\cdot h\in G\subseteq \F_2^{V_{n-1}}\rtimes \Tnn.$$ The following theorem highlights what these elements would look like in the case that $H$ contains an $n$-odometer.

\begin{thm} \label{Thm: final main2}
    Let $H,G\subseteq \Tn=\F_2^{V_{n-1}}\rtimes \Tnn$ be subgroups such that $H$ is elementwise $K_n$-conjugate into $G$, and suppose that there exists an $n$-odometer $g\in \Tn$ such that $g\in H\cap G$. For each $h\in H$, define $$C_h:=\{t:V_{n-1}\to \F_2\mid (t,\one)h\in G\}.$$
    
    Then, for every $h\in H$, either $h\in G$, or there exists an integer $m$ such that $C_h\subseteq E_m\setminus E_{m-1}$.
\end{thm}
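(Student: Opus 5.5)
The plan is to show that each $C_h$ is a coset of a single flag member, and then read off the level. Write $G\cap K_n=F\times\{\one\}$ for a subspace $F\subseteq\F_2^{V_{n-1}}$.

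\textbf{Step 1: $C_h$ is a nonempty coset of $F$.} Since $H$ is elementwise $K_n$-conjugate into $G$, (\ref{Eq: conjugating g by some t}) gives some $s$ with $((I+A_h)(s),\one)h\in G$. Hence $C_h\neq\emptyset$. Now take $t,t'\in C_h$. Then
$$(t,\one)h\bigl((t',\one)h\bigr)^{-1}=(t-t',\one)\in G\cap K_n,$$
so $t-t'\in F$. Conversely, if $t\in C_h$ and $f\in F$, then $(t+f,\one)h=(f,\one)(t,\one)h\in G$. Therefore $C_h=t+F$ for any $t\in C_h$.

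\textbf{Step 2: identify $F$ with a flag member.} Since $G$ contains the $n$-odometer $g$, Proposition \ref{prop: final main0} applied to $G$ gives $F=E_k$ for some $0\leq k\leq 2^{n-1}$. The case $F=\{\zero\}=E_0$ is included here.

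\textbf{Step 3: conclude.} Suppose $h\notin G$. Then $\zero\notin C_h$, because otherwise $(\zero,\one)h=h\in G$. Fix $t\in C_h$. Then $t\notin E_k$, so $t\in E_m\setminus E_{m-1}$ for a unique $m>k$; such an $m$ exists because $E_{2^{n-1}}$ is the whole space. Since $m-1\geq k$, we have $E_k\subseteq E_{m-1}$. Adding an element of $E_{m-1}$ to a vector of $E_m\setminus E_{m-1}$ keeps it in $E_m\setminus E_{m-1}$. Hence
$$C_h=t+E_k\subseteq E_m\setminus E_{m-1}.$$

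The only substantive input is Step 2. It relies on Proposition \ref{prop: final main0}, which converts the kernel $G\cap K_n$ into a flag member, so the odometer hypothesis is used only for $G$. Once that is in place, the rest is bookkeeping with cosets, and I do not expect any step to be a real obstacle.
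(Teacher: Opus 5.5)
Your proof is correct and follows essentially the same route as the paper: differences of elements of $C_h$ lie in $G\cap K_n$, Proposition \ref{prop: final main0} applied to $G$ identifies $G\cap K_n$ with a flag member $E_k\times\{\one\}$, and adding elements of a lower flag member does not change the level. The paper argues by contrapositive with two elements $t,s\in C_h$ at different levels instead of your coset description $C_h=t+E_k$. Its text writes $E_m\times\{\one\}\subseteq H\cap K_n$, but as you correctly do, the proposition has to be applied to $G\cap K_n$.
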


\begin{proof}
    Let $h\in H$. We prove that, if $C_h\not\subseteq E_m\setminus E_{m-1}$, then $h\in G$, which finishes the proof of the theorem. Hence suppose we have $t,s\in C_h$ such that $t\in E_m\setminus E_{m-1}$ and $s\in E_k\setminus E_{k-1}$ for $m> k$ without loss of generality. That is, we have $(t,\one)h,(s,\one)h\in G$. Therefore: $$(t,\one)h\cdot ((s,\one)h)^{-1}=(t,\one)hh^{-1}(s,\one)^{-1}=(t+s,\one)\in G.$$ But since $E_k\subseteq E_{m-1}$, we know that $t+s\in E_m\setminus E_{m-1}$. By Proposition \ref{prop: final main0} we get that $E_m\times \{\one\}\subseteq H\cap K_n$, and hence $(t,\one)\in G$. But then $(t,\one)\cdot (t,\one)h=h\in G$, finishing the proof.
\end{proof}

\subsection{Counterexample} \label{Section: Revision: Counterexample}

In this subsection we give a counterexample to Conjecture \ref{Conj: Goksel}. The main idea goes as follows. If we have subgroups $H,G\leq \Tn$ such that $H$ contains an $n$-odometer and such that $H$ is elementwise $K_n$-conjugate into $G$, then up to conjugacy we can assume that $H\cap G$ contains the standard $n$-odometer $c|_{T_n}$ as in Section \ref{Section: Revision: Flag}.

Next we consider the results from Section \ref{Section: Revision: Application}. By Proposition \ref{prop: final main0} we know that there exists an integer $r$ such that $$H\cap K_n=E_r\times\{\one\},$$ and using Lemma \ref{Lem: e_2 in E_2} we know that $E_r=\langle e_1,...,e_r\rangle$, where $e_1,...,e_{2^{n-1}}$ are the elements defined in Definition \ref{Def: the functions e_m}. By Theorem \ref{IntroThm: counter} we know that $H$ is globally $K_n$-conjugate into $G$ if and only if $H\subseteq G$, and by Theorem \ref{Thm: final main2}, we know that if $H\not\subseteq G$, then for any $h\in H\setminus (H\cap G)$ there exists some integer $m$ such that $T_h\subseteq E_m\setminus E_{m-1}$.

These results make it easier to manually check examples. For every integer $m$, we let $f_m\in \Tnn^2\rtimes S_2$ denote the element that equals $(e_m,\one)\in \F_2^{V_{n-1}}\rtimes \Tnn$ as elements of $\Tn$.

\begin{example} \label{Ex: counterexample}
    Consider $n=4$. For every $1\leq m\leq 8$, let $$B_m:=\langle f_1,...,f_m\rangle \subseteq \aut(T_3)^2\rtimes S_2$$ denote the subgroup that equals $E_m\times \{\one\}\subseteq \F_2^{V_3}\rtimes \aut(T_3)$ as subgroups of $\aut(T_4)$.

    The counterexample to Conjecture \ref{Conj: Goksel} is given by $$H=\langle (c|_{T_3}^2,\one),c|_{T_4},B_3\rangle, \phantom{;;;} G=\langle f_4\cdot (c|_{T_3}^2,\one),c|_{T_4},B_3\rangle,$$ both identified with subgroups of $\aut(T_3)^2\rtimes S_2$. We show that $H$ is elementwise $K_4$-conjugate into $G$, but not globally $K_4$-conjugate into $G$, by using a brute-force SageMath program.

    The groups are coded as subgroups of $S_{16}=S_{2^4}$, by their permutations of $V_4$. Therefore, we have to describe a bijection $$\chi_2:V_4\isomto \{1,...,16\},$$ since in SageMath, $S_{16}$ is described as a permutation group on $\{1,...,16\}$, while in our labelling of $V_4=\{0,1\}^4$ defined in Section \ref{Section: Preliminaries: rooted binary trees explanation}, we start counting from 0. 
    
    We define the bijection as follows. For each word $v=v_0v_1v_2v_3\in V_4=\{0,1\}^4$, we let $v$ correspond to $$\chi_2(v):=1+\sum_{i=0}^3 v_i\cdot 2^i\in \{1,...,16\}.$$ Then, for each $h\in \aut(T_4)$, we code $$h(\chi_2(v)):=\chi_2(h(v)).$$ The resulting permutations are shown below. As for the elements of $K_4$, we use the recursion $$E_8=\langle e_1,e_2,e_3,e_4,e_5,e_6,e_7,e_8\rangle$$ for $e_m=(I+A_{c|_{T_4}})e_{m+1}$. The element $e_8:V_3\to \F_2\cong S_2$, which by definition has $e_8(0^3)=(0\ 1)\in S_2$ and $e_8(v)=1\in S_2$ for $v\neq 0^3$, corresponds to the transposition $(1\ 9)$, as it only permutes $\chi_2(0^4)=1$ with $\chi_2(0^31)=9$. Using (\ref{Eq: conjugating t by some g}), we find $$f_m=f_{m+1}(c|_{T_4})f_{m+1}(c|_{T_4})^{-1},$$ and $K_4=\langle f_1,f_2,f_3,f_4,f_5,f_6,f_7,f_8\rangle$. Note that in the program, we have to define the $f_m$ by instead reversing the word order, as SageMath uses right actions instead of left actions. The program is written as follows.

\begin{lstlisting}
S = SymmetricGroup(16);
g = S("(1,2,3,4,5,6,7,8,9,10,11,12,13,14,15,16)"); # Standard 4-odometer g
h = S("(1,5,9,13)(3,7,11,15)"); # (g^2,1)
f8 = S("(1,9)");
f7 = g^(-1)*f8*g*f8; # We don't take f8*g*f8*g^(-1), as sage
f6 = g^(-1)*f7*g*f7; # works with right actions.
f5 = g^(-1)*f6*g*f6;
f4 = g^(-1)*f5*g*f5;
f3 = g^(-1)*f4*g*f4;
f2 = g^(-1)*f3*g*f3;
f1 = g^(-1)*f2*g*f2;
K = S.subgroup([f8,f7,f6,f5,f4,f3,f2,f1]);
H = S.subgroup([h,g,f3,f2,f1]); # Group H defined as <h,g,B_3>
G = S.subgroup([g,f4*h,f3,f2,f1]); # Group G defined as <f_4*h,g,B_3>
eltwise = True;
for x in H: # Check for each x if it is K-conjugate into G
    for v in K:
        if v*x*v^-1 in G:
            break;
    else: # Hits the else statement only if x is not K-conjugate into G
        eltwise = False;
        print("H is not elementwise K-conjugate into G, because of:");
        print(x);
        break;
if(eltwise):
    print("H is elementwise K-conjugate into G.");
else:
    print("H is not elementwise K-conjugate into G.");
if(H == G):
    print("H = G.");
else:
    print("H =/= G.");
if(H.order() == G.order()):
    print("H and G have the same order.");
else:
    print("H and G do not have the same order.");
\end{lstlisting}

This program prints out that $|H|=|G|$, that $H$ is elementwise $K_n$-conjugate into $G$, and that $H\neq G$. Using theorem \ref{IntroThm: counter}, this implies that $H$ is not globally $K_n$-conjugate into $G$, which shows that this is indeed a counterexample to Conjecture \ref{Conj: Goksel}.

\end{example}

\section{Discussion} \label{Section: discussion}

As it happens, Example \ref{Ex: counterexample}, which is the counterexample to Conjecture \ref{Conj: Goksel}, shows something more. In adding the line \begin{lstlisting}
H.is_isomorphic(G)
\end{lstlisting} 

to the code, the program returns that $H$ and $G$ are not even isomorphic groups. As a result, something stronger occurs: elementwise $K_n$-conjugacy of $H\leq \Tn$ into $G\leq \Tn$ does not even necessarily imply that $H$ is isomorphic to a subgroup of $G$. 

If the properties $\p(G_n(f),M_n(f))$ do in fact always hold, then we've shown that a proof must rely on more than purely group theoretic properties. That is, the local to global conjugation must be forced by some property from either the Galois group or the even Markov group that has not yet been documented.

In fact, since the order of $H$ can be checked to be 128, the main result from \cite{Sonn} shows that $H$ is the Galois group of some polynomial over $\mathbb{Q}$. However, this does not imply that $H$ is the Galois group corresponding to a fourth iterate of a polynomial, nor that $G$ equals the Markov group with respect to that same polynomial. In any case, more work must be done in order to prove or disprove Conjecture \ref{Conj: GokselMarkov}.

\begin{acknowledgement}
    I would like to thank Pablo Lummerzheim, Jaime G{\'o}mez Ortiz, and my supervisor Olga Lukina for the many useful conversations related to the topic.
\end{acknowledgement}

\end{document}